\documentclass[11pt,reqno,twoside]{amsart}

\usepackage{graphicx}
\usepackage{mathrsfs}
\usepackage{color}
\usepackage{comment}
\usepackage{amssymb}
\usepackage{esint}
\usepackage{amsmath}
\usepackage{amsthm}

\allowdisplaybreaks

\usepackage[margin = 1.2in]{geometry}
\usepackage[hyperindex,breaklinks]{hyperref}
\usepackage{cleveref}

\newcommand{\on}{\operatorname}

\newcommand{\divv}{\on{div}}

\newcommand{\spt}{\mathrm{spt}}
\newcommand{\reg}{\mathrm{reg}}
\newcommand{\sing}{\mathrm{sing}}
\newcommand{\hit}{\mathrm{hit}}
\newcommand{\merge}{\mathrm{merge}}
\DeclareMathOperator{\spread}{\mathrm{sp}}
\DeclareMathOperator{\strength}{\mathrm{str}}
\newcommand{\dee}{\partial}

\DeclareMathOperator{\sign}{\mathrm{sgn}}

\newtheorem{lemma}{Lemma}
\newtheorem{proposition}[lemma]{Proposition}

\newtheorem{theorem}[lemma]{Theorem}
\newtheorem{conjecture}[lemma]{Conjecture}

\theoremstyle{definition}

\newtheorem{definition}[lemma]{Definition}

\title[A Partial $\epsilon$-Regularity Theorem for ISGNs in $\mathbb{R}^n$]{A Partial $\epsilon$-Regularity Theorem for Integral Stationary Geodesic Networks in $\mathbb{R}^n$}
\author[Henry Bosch]{Henry Bosch}

\address{Department of Mathematics, Stanford University, Building 380, Stanford, CA 94305, USA}
\email{hbosch@stanford.edu}
\date{}

\begin{document}

\begin{abstract}
We prove a partial $\epsilon$-regularity theorem for sequences of integer multiplicity stationary geodesic networks in $\mathbb{R}^n$ converging to a multiplicity $k$ line.
\end{abstract}

\maketitle

\setcounter{tocdepth}{1}

\section{Introduction}

An \emph{integral stationary geodesic network} in a Riemannian $n$-manifold $(M,g)$ is a locally finite embedded graph whose edges are geodesic segments, rays, complete geodesics, or closed geodesics, each carrying a positive integer multiplicity, which satisfies a geometric balancing condition at each vertex: namely, the sum of unit tangents along each geodesic leaving the vertex, counted with multiplicity, is zero. 

While many examples of integral stationary geodesic networks have been constructed (see \cite{AlexShapes}, \cite{Nab2023}), bounds on their complexity in general dimension $n$ are scarce. Given an arbitrary sequence of integral stationary geodesic networks whose local mass is uniformly bounded above, one can extract a local subsequential limit, which is also an integral stationary geodesic network (see \cite{Allard1976}).

In this paper, we provide an argument which controls the nature of the convergence of a sequence of integral stationary geodesic networks to a multiplicity $k$ line in $\mathbb{R}^n$. This provides a model for degeneration of a sequence of integral stationary geodesic networks into the top stratum of their limit network. We call our theorem a \emph{partial $\epsilon$-regularity theorem} because it shows that an integral stationary geodesic network in $\mathbb{R}^n$ sufficiently close to a multiplicity $k$ line has controlled structure, but is not necessarily regular.

When $n=2$, a bound was recently constructed for the number of singular points of integral stationary geodesic networks in Riemannian surfaces \cite{frolov2026geodesicnetseuclideanplane}. For an integral stationary geodesic network in $\mathbb{R}^2$ sufficiently close to a multiplicity $k$ line, their Theorem 1 bounds the number of singular points by $(50k)^{8k^2}$. 
It appears that our upper bound on the number of singular points satisfies $C(n,k) \leq n^{\frac{3}{2}(k-1)}(5k)^{k^2}$. In particular, when $n=2$, $C(2,k) \leq (10k)^{k^2}$.

\subsection{Main Theorem}

\begin{theorem} \label{maintheoremfinal}
    For any $n \geq 2$ and $k \geq 1$, there exists $C = C(n,k)$ such that if $(X_i)$ is any sequence of integral stationary geodesic networks in $\mathbb{R}^n$ which converge weakly (as Radon measures) to a multiplicity $k$ line $\ell$, and $K \subset \mathbb{R}^n$ is compact, then for sufficiently large $i$ (depending on $K$ and the $X_i$), the number of singular points of $X_i$ in $K$ is at most $C(n,k).$
\end{theorem}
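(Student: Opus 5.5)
We will argue by induction on $k$, combining a balancing (flux) identity on slabs with a compactness/blow-up argument. Take $\ell$ to be the $x_1$-axis and enlarge $K$ to a cylinder $[-R,R]\times B^{n-1}_R$. By monotonicity and weak convergence, for large $i$ the network $X_i\cap K$ lies in a thin tube $\{|x'|<\delta_i\}$ with $\delta_i\to 0$ (Hausdorff convergence of supports). Also every edge of $X_i$ meeting $K$ has unit tangent $\tau$ with $|\tau - (\pm e_1)|$ small, except for edges of length $o(1)$.

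\textbf{Slab flux.} The first step uses stationarity against the test field $e_1$ on a slab $\{a<x_1<b\}$. It gives that $\sum_{\text{edges crossing } x_1=t} m\,\langle \tau,e_1\rangle$, with the sign fixed by direction, is independent of $t$. Weak convergence to multiplicity $k$ then forces the transverse mass to equal $k$ for generic $t$. Since every crossing edge has $\langle\tau,e_1\rangle\approx 1$, the total multiplicity crossing a generic hyperplane $x_1=t$ is exactly $k$ for large $i$. Here we use integrality: a sum of positive integers times numbers near $1$ that equals $k$ must be a configuration of total multiplicity $k$.

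\textbf{Singular points.} Next we bound singular points via a weighted count. At a singular vertex, balancing with all edges nearly parallel to $\pm e_1$ forces the incoming multiplicity to equal the outgoing multiplicity (counting the forward/backward split). Consequently the vertex either merges or splits strands, or has edges turning back. Short "back-turning" edges are controlled by the flux constancy. For the main count we associate to each generic slice a partition of $k$ (the multiplicities of crossing edges). A singular vertex changes this partition, but the partition does not determine the location, so monotonic counting alone fails: strands can split and re-merge many times.

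\textbf{Induction on $k$ (main obstacle).} This is the step needed to bound that splitting and re-merging. Suppose the singular-point count is unbounded along a sequence. Then rescale around a point where singular points accumulate, at the scale of the separation between clusters. After rescaling, the networks converge to a stationary integral network that is either a union of lines with total multiplicity $k$, or a configuration splitting into groups of multiplicity $<k$. In the latter case we apply the inductive hypothesis $C(n,k')$ with $k'<k$ to each group. The delicate part is choosing the scales, for instance by a stopping-time or covering argument on dyadic scales. The aim is to show that only a bounded number of scales, bounded in terms of $n$ and $k$, can contain genuinely new singular structure. The case of a non-flat blow-up limit must be ruled out by showing that the limit of geodesic networks that are nearly parallel is again a union of parallel lines. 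That in turn lets the count at each scale be bounded by the number of partitions, giving a product bound of the form $n^{O(k)}k^{O(k^2)}$. I expect this multiscale bookkeeping, specifically preventing infinitely many nested scales of splitting, to be the main difficulty. I would attack it first by bounding the angle defect $\sum m(1-\langle\tau,e_1\rangle)$, which is controlled by the mass excess and decays to zero.
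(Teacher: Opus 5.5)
You have a genuine gap exactly at the step you flag as the main obstacle, and the blow-up strategy cannot close it.

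Rescaling does not change angles, and for large $i$ every tangent is within a small angle of $E_1$. So every blow-up limit, at any center and any scale, is a union of parallel lines. Small-angle crossings, splits and merges all disappear in the limit. A sequence with unboundedly many singular points therefore does not produce a limit with infinitely many, and there is nothing for the inductive hypothesis to count.

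Moreover, the interactions need not be nested in scale at all. They can be spread along the whole length of $K$ at a single scale, and compactness on bounded windows says nothing about how many windows contain one. An anisotropic rescaling only turns the question into bounding collisions of a linearized particle system, which is the same problem. Your partition bookkeeping also fails: a vertex with two multiplicity-one strands in and two out changes no partition.

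What is missing is a global, effective, nearly monotone quantity. In the paper it is built as follows.
\begin{enumerate}
\item Your slab flux is only the $E_1$-component of the conserved vector $W=\sum_{x\in X_t}\Theta(x)s_x$, obtained from all the constant test fields $E_j$.
\item $W$ defines a conserved central tilt $\delta$ with $\delta^2=\sum_{x\in X_t}\Theta(x)|s_x-S|^2$. This is essentially your angle defect, but used as an exact invariant that sets the scale, not merely as a quantity tending to $0$. It also gives a conserved spread.
\item From these, the paper shows that whenever the network becomes completely connected after a regular time $t_0$, some singular point in that window has strength at least $c(k)\delta$.
\item It introduces the collision potential $\Phi(t)=\sum_{j\ge 2}\sum_{\{x,x'\}\subset X_t}\Theta(x)\Theta(x')(\langle s_{x'},E_j\rangle-\langle s_x,E_j\rangle)\sign(x_j-x'_j)$. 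It satisfies $|\Phi|\le 2nk^2\delta$. After a generic rigid motion, it increases by at most $O(\delta^3)$ at any singular point or ghost crossing, and it drops by order $\delta$ at every strong singular point.
\item Induction on $k$ bounds the number of singular points in each merging window by a constant. So singular points and ghost crossings together number $O(N+1)$, and their wrong-sign contributions total $O((N+1)\delta^3)$.
\item The budget $\Phi(-\infty)-\Phi(\infty)\le 4nk^2\delta$ then bounds the number $N$ of windows.
\end{enumerate}

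Your preliminary structure step is also incomplete. You assert without argument that edges meeting $K$ are nearly parallel to $\pm e_1$ except for short ones. An integral tilt-excess bound gives no more than that, and the exception is fatal. A short steep edge at a vertex destroys the incoming/outgoing balance, and with it the exact count $k$ on every slice that your later steps use.

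The paper controls every edge pointwise in two moves.
\begin{enumerate}
\item It selects end slices with bounded multiplicity and small tilt, by Chebyshev applied to the mass bound and to the $L^2$ tilt bound from first variation with $Z=x_jE_j$.
\item It applies first variation with the constant field $E_2$ on the half-tube $\{x_2<y_2\}$ through a given point $y$.
\end{enumerate}
Balance at vertices additionally needs a bound on the multiplicity entering the vertex. The paper gets this from a leftmost-counterexample argument starting at a good slice.
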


This represents a limiting case of the following conjecture, which follows from \cite{frolov2026geodesicnetseuclideanplane} in the case $n=2$.

\begin{conjecture}[Spencer's Bus Stop Problem]
Fix a Riemannian $n$-manifold $(M,g)$, $n \geq 2$, and let $U \subset\subset V \subset\subset M$ be open. For any $L \geq 0$, there is a $C = C(M,U,V,L)$ such that any integral stationary geodesic network $X \subset M$ whose total length counted with multiplicity in $V$ is at most $L$ has at most $C$ singular points in $U$.
\end{conjecture}

We remark that no similar theorem may hold for a network in $M$ with sufficiently small curvature. In the Euclidean plane, consider the graph of the function $\epsilon^3 \sin(\frac{x}{\epsilon})$, and take its union with the $x$-axis. Then the curvature away from singular points is bounded by $\epsilon$, but the number of singular points in $B_1(0)$ is
$\Theta(\frac{1}{\epsilon})$ as $\epsilon \to 0.$

\subsection{Paper Organization} In \Cref{sec:geodesicnetworks} we define integral stationary geodesic networks in Riemannian manifolds. In \Cref{sec:structuretheorem} we prove a structure theorem for integral stationary geodesic networks sufficiently close to a multiplicity $k$ line. In \Cref{sec:maintechnicaltheorem} we state the main technical theorem, which bounds the number of singularities of a stationary geodesic network close to a multiplicity $k$ line with controlled structure (as given by the structure theorem). Sections \ref{sec:conservedquantities}, \ref{sec:centering}, \ref{sec:colllisionpotentials} and \ref{sec:spreadsmerging} are devoted to setting up the proof of the main technical theorem. Finally, \Cref{sec:globalargument} proves the main technical theorem, and \Cref{sec:proofmaintheorem} brings everything together to prove the main theorem.

\subsection{Acknowledgements} I would like to thank Otis Chodosh for introducing me to this problem, and Alexander Nabutovsky, Alec Shelley and Spencer Dembner for valuable conversations.

\subsection{On the use of AI} The author made use of AI to review the paper and correct typos. The paper contains no AI-generated text.

\section{Geodesic Networks}
\label{sec:geodesicnetworks}

In this section let $(M,g)$ be a Riemannian manifold of dimension $n \geq 2$.

\begin{definition}
    A \emph{geodesic network} in $M$ is a closed subset $X \subset M$ which is locally a finite embedded graph, each of whose edges is a geodesic segment, ray, complete geodesic, or closed geodesic. In this paper, we represent geodesic networks as sets $X \subset M$, and we write $X = X_\sing \sqcup X_\reg$, where $X_\reg \subset X$ is the set of points $x \in X$ around which $X$ is locally a geodesic segment, and $X_\sing = X \setminus X_\reg.$
\end{definition}

\begin{definition}
    A \emph{geodesic network with multiplicity} in $M$ is a geodesic network $X \subset M$ along with a multiplicity function $\Theta: X_\reg \to \mathbb{R}_+$ which is locally constant on $X_\reg.$ That is, each connected component of $X_\reg$ is assigned a positive real multiplicity. We say that $X$ is \emph{integral} if $\Theta$ takes on only positive integer values on $X_\reg$. Any geodesic network without multiplicity may be considered as one with multiplicity by setting $\Theta = 1$ on $X_\reg$.
\end{definition}

\begin{definition}
We extend $\Theta$ to all of $M$ as follows. For $x \in X_\sing$, let $\Theta_1, \ldots, \Theta_m > 0$ be the multiplicities of the geodesics touching $x$. Then set $\Theta(x) = \frac{1}{2}\sum_{j=1}^m \Theta_j$. If $x \notin X$, then set $\Theta(x) = 0$. Then $\Theta$ is the usual notion of local density for rectifiable varifolds (see \cite{simon_gmt}, 1.3).
\end{definition}

Note that $\Theta$ may have a positive noninteger value at a singular point $x \in X_\sing$. However this value always lies in $\frac{1}{2}\mathbb{Z}$.

\begin{definition}
    Let $X \subset M$ be a geodesic network with multiplicity. The \emph{measure associated to} $X$, written as $\mu_X$, is the Borel measure $\Theta \, d\mathcal{H}^1$, where $\mathcal{H}^1$ is one-dimensional Hausdorff measure on $M$. Notice that $\spt \,\mu_X = X$. We will write $d\mu_X = dX.$
\end{definition}

If $X \subset M$ is a geodesic network, then by local finiteness, $\mu_X$ is a Radon measure on $M$.

\begin{definition}
    Let $X \subset M$ be a geodesic network with multiplicity. We say that $X$ is \emph{stationary} if the following holds. Given any $x \in X_\sing$, let $s_1,\ldots,s_m \in T_xM$ be the unit tangent vectors pointing along the geodesics leaving $x$, and $\Theta_1,\ldots,\Theta_m$ be the corresponding multiplicities of the geodesics. Then 

    \[\sum_{i=1}^m \Theta_i \, s_i = 0.\]
\end{definition}

Stationary geodesic networks are critical points of the length functional on a suitable space of generalized curves on $M$. For more details, see \cite{Allard1976}.

For convenience we state a first variation formula for stationary geodesic networks. The proof is a standard cutoff argument, so we omit it. 

\begin{proposition}[First Variation] \label{first-variation}
Let $X \subset M$ be a stationary geodesic network with multiplicity. Let $U \subset\subset M$ be open, with the property that $X \cap U$ has finitely many ends, and each end has a unique limiting point in $\partial U$ ($U$ geodesically convex would suffice).
Let $Z$ be a $C^1$ vector field on $M$. Let $E$ be the set of ends of $X \cap U$. Each end $e \in E$ has a representative which is a geodesic segment $\gamma$ which limits to $\dee U$. For each $e \in E$ let $x_e \in \partial U$ be the limiting point in $\partial U$ of $\gamma$, let $\eta_e \in T_{x_e}M$ be the limiting unit tangent along $\gamma$ pointing outward from $U$, and let $\Theta(e) > 0$ be the multiplicity of $X$ along $\gamma$. Then
\[\int_U \divv_X Z \, dX = \sum_{e \in E} \Theta(e) \langle \eta_e, Z_{x_e}\rangle.\]
(Here for $x \in X$, $(\divv_X Z)|_x = \langle\eta, \nabla_\eta Z\rangle$, where $\eta \in T_xM$ is a choice of unit tangent to $X$ at $x$.)

\end{proposition}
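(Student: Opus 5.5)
The plan is to localize the divergence theorem to the pieces of $X\cap U$. Since $X$ is locally a finite embedded graph and $U\subset\subset M$, the set $X\cap \overline{U}$ meets only finitely many edges. Under the hypothesis that $X\cap U$ has finitely many ends, each with a unique limit point in $\partial U$, we decompose $X\cap U$ into finitely many open geodesic segments $\gamma_1,\dots,\gamma_N$. Each $\gamma_j$ carries a constant multiplicity $\Theta_j$. Each endpoint of $\gamma_j$ is either a singular point of $X$ inside $U$ or a limit point $x_e\in\partial U$ of an end $e$. If a closed geodesic or a complete geodesic lies entirely in $U$ without singular points, it contributes zero by periodicity, so we may ignore it; a complete geodesic cannot lie in $U\subset\subset M$ anyway, since its end would then violate the ends hypothesis.

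On a single geodesic segment $\gamma:(a,b)\to M$ parametrized by arclength, we have $\nabla_{\gamma'}\gamma'=0$. Hence
\[\frac{d}{dt}\langle \gamma',Z\rangle=\langle\gamma',\nabla_{\gamma'}Z\rangle=\divv_X Z.\]
Integrating gives
\[\int_\gamma \divv_X Z\,d\mathcal H^1=\langle\gamma'(b^-),Z\rangle-\langle\gamma'(a^+),Z\rangle.\]
Each of these two terms is $\langle \eta,Z\rangle$, where $\eta$ is the unit tangent at the endpoint pointing out of the segment. The one-sided limits exist because $Z$ is $C^1$ (hence continuous up to $\overline U$, as $U\subset\subset M$) and $\gamma$ extends as a geodesic to its endpoints. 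Multiplying by $\Theta_j$ and summing over $j$ gives $\int_U\divv_X Z\,dX$, since $\mu_X=\Theta\,d\mathcal H^1$ and the finite set $X_\sing\cap U$ has $\mathcal H^1$-measure zero.

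Next we regroup the boundary terms by endpoint. At an interior singular point $x\in X_\sing\cap U$, the segments meeting $x$ have outward-from-segment tangents equal to $-s_i$, where $s_i$ are the unit tangents leaving $x$. Their total contribution is therefore $-\langle\sum_i\Theta_i s_i,Z_x\rangle$, which vanishes by stationarity. Regular points are not endpoints of the decomposition, and a segment whose two ends coincide is handled the same way. The remaining terms come from the ends $e\in E$. Each such term is $\Theta(e)\langle\eta_e,Z_{x_e}\rangle$, with $\eta_e$ the limiting tangent pointing out of $U$, which is exactly the right-hand side.

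The main obstacle is bookkeeping at $\partial U$: we must check that the decomposition is genuinely finite and that the boundary limits make sense. Finiteness is where the hypothesis is used. Local finiteness of the graph on the compact set $\overline U$ bounds the number of vertices in $U$, and the finitely-many-ends assumption bounds the number of segments exiting. A potential subtlety is an edge crossing $\partial U$ infinitely often, which would create infinitely many components. This is excluded because each end, by hypothesis, has a representative segment with a unique limit point in $\partial U$, and there are finitely many ends. In the geodesically convex case both properties follow from the fact that each geodesic intersects $U$ in an interval.
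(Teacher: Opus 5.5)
Your proof is correct, but it takes a different route from the paper, which omits the proof and only calls it a standard cutoff argument. In that argument one applies the interior identity $\int_M \divv_X Y\,dX=0$ (for compactly supported $Y$) to $Y=\phi_\epsilon Z$, with cutoffs $\phi_\epsilon$ increasing to $\chi_U$. This gives $0=\int \phi_\epsilon\,\divv_X Z\,dX+\int \eta[\phi_\epsilon]\langle \eta,Z\rangle\,dX$. The second integral concentrates on the tails of the ends and tends to $-\sum_{e}\Theta(e)\langle\eta_e,Z_{x_e}\rangle$.

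You instead integrate $\frac{d}{dt}\langle\gamma',Z\rangle$ along each arc of $X\cap U$ and cancel the interior vertex contributions directly with the balancing condition. The paper defines stationarity by the balancing condition, not by vanishing first variation. So the interior identity the cutoff argument starts from would itself need your computation (with no boundary terms). Evaluating the cutoff term along each end is again a one-dimensional integration by parts. Your route is therefore the more self-contained one for networks. The cutoff formulation is the one that extends to stationary varifolds, where no edge decomposition is available.

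One small inaccuracy, in your final sentence. For geodesically convex $U$ it is not true that every geodesic meets $U$ in an interval: a long closed geodesic in a flat torus can cross a small convex ball several times. What does hold is that a geodesic piece which is minimizing between any two of its points meets $U$ in an interval. Local finiteness lets you cover $X\cap\overline U$ by finitely many such pieces, so finitely many ends with unique limit points still follow.
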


\begin{definition}
    Let $X \subset M$ be a geodesic network with multiplicity. If $X$ is both integral and stationary we call it an \emph{integral stationary geodesic network}, or by abbreviation, an ISGN.
\end{definition}

\section{A Structure Theorem}
\label{sec:structuretheorem}

Fix $n \geq 2$. In this section, we prove a structure theorem for ISGNs sufficiently close to a multiplicity $k$ line in $\mathbb{R}^n$. In what follows, we let $E_1,\ldots,E_n$ be the standard orthonormal basis of $\mathbb{R}^n$ and of $T_x \mathbb{R}^n$ for any $x \in \mathbb{R}^n$. Let $x = (x_1,\ldots,x_n)$ be the standard coordinate on $\mathbb{R}^n$. Let $\pi = x_1$ be projection onto the $E_1$-axis.

\begin{definition}
    Let $B_r(0)$ denote the open ball of radius $r$ about the origin in $\mathbb{R}^{n-1}$. Suppose $A \subset \mathbb{R}$ is any subset. The \emph{tube} around $A$ of radius $r$ is the subset 
    \[T_{A,r} = A \times B_r(0) \subset \mathbb{R}^n.\]
    If $A$ is open, then $T_{A,r}$ is open. If $A = \{t\}$, write $T_{A,r} = T_{t,r}$.
\end{definition}

Now we can state the structure theorem for integral stationary geodesic networks in $\mathbb{R}^n$.

\begin{theorem}[Structure Theorem] \label{structure-theorem}
    Fix $k \geq 1$. For $i = 1,2,\ldots$ let $X_i$ be an ISGN in $\mathbb{R}^n$, and suppose that $X_i$ weakly converges to the multiplicity $k$ $E_1$-axis (as Radon measures on $\mathbb{R}^n$). Fix an open interval $I \subset\subset \mathbb{R}$ of finite length. Fix $0 < r < R$ and $\theta > 0$. Then there is an $N = N(n,k,(X_i), I, R, r, \theta)$ such that for all $i \geq N$,
    \begin{enumerate}
        \item $X_i \cap T_{I,R} \subset T_{I,r}$.
        \item At any regular point of $X_i \cap T_{I,R}$, any tangent vector to $X_i$ there makes an angle at most $\theta$ with the $E_1$-axis.
        \item At any singular point $x \in X_i \cap T_{I,R}$, the sum of the multiplicities of geodesic segments/rays entering $x$ from the left and leaving $x$ from the right (in the $E_1$ direction) are equal.
        \item For any $t \in I,$ $|X_i \cap T_{t,R}| \leq k$, and 
        \[\sum_{x \in X_i \cap T_{t,R}} \Theta_{X_i}(x) = k.\]
    \end{enumerate}
\end{theorem}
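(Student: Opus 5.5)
Throughout, we pass to a slightly larger interval $I' \supset\supset I$ and radius $R' > R$, so that all estimates hold on $T_{\bar I', R'}$ and the conclusions restrict to $T_{I,R}$. The basic tools are the monotonicity formula for stationary one-dimensional varifolds (derivable from \Cref{first-variation} with radial fields $Z = \phi(|x-x_0|)(x-x_0)$), which gives $\Theta_{X_i}(x) \geq 1/2$ at every point of $X_i$, and Allard's compactness/upper semicontinuity of density. The first step is (1). Suppose there are points $x_i \in X_i \cap T_{I,R}\setminus T_{I,r}$. By monotonicity, $\mu_{X_i}(B_\rho(x_i)) \geq \rho$ for small fixed $\rho$ (with $\rho < (R-r)/2$, and using that each $X_i$ is stationary in a neighborhood). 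A subsequence of the $x_i$ converges to a point at distance at least $r$ from the axis. Upper semicontinuity of mass on closed balls then contradicts the weak convergence to a measure supported on the axis.

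For (2), argue by contradiction using a blow-up. Suppose there are regular points $x_i \in X_i\cap T_{I,R}$ with tangent $\eta_i$ making angle $>\theta$ with $E_1$. Each geodesic through $x_i$ is a straight line segment, and it can only terminate at a singular point or leave the tube. The key observation is that a straight segment with angle $>\theta$, which by (1) stays in $T_{I',r}$, has length at most $2r/\sin\theta$ before leaving. This alone does not give a contradiction, so instead use the varifold limit directly. Consider the first variation against test fields $Z = \phi(x) E_j$ for $j \geq 2$, or the quantity $\int \phi \,|P_{T X_i} - P_{E_1}|\, dX_i$. Allard's theorem (or the fact that integral stationary varifolds converging to a multiplicity-$k$ line converge as varifolds, by Allard compactness) gives
\[\int \phi\, |\eta^\perp|^2 \, dX_i \to 0,\]
where $\eta^\perp$ is the component of the unit tangent orthogonal to $E_1$.

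One then upgrades this $L^2$-smallness of the tilt to pointwise control at \emph{every} regular point. This is the main obstacle, since the tilt integral being small only says that tilted segments have small total length, not that they don't exist. I would handle it by a chasing argument. Take a tilted edge and follow it to its endpoint, a singular vertex $v$ (it cannot leave $T_{I',R'}$ laterally, by (1) applied at a bigger radius). At $v$, the balancing condition forces some other edge to have a component of tangent in the opposite perpendicular direction with comparable weight, since multiplicities are integers bounded by about $2k$ by density bounds. Iterating produces a chain of tilted edges, and the chain cannot terminate: every vertex has an outgoing tilted edge. By convexity (project onto the direction maximizing $\langle x, w\rangle$ for a suitable perpendicular unit $w$ over the compact piece), a vertex maximizing $\langle x,w\rangle$ among the finitely many vertices in the tube cannot have all tilted edges balanced unless the tilt is small. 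More precisely, apply \Cref{first-variation} with $Z = \phi(x_1) w$ on a short subtube: the boundary term is controlled by $k\sin$ of the angles, and the interior is zero. This shows the total signed perpendicular flux is small, and combining with the integrality of multiplicities forces all tilts to be below $\theta$ for large $i$.

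Given (1) and (2), apply \Cref{first-variation} with $Z = \phi(x_1)E_1$ on $U = T_{J,R}$ for small intervals $J$. This shows that the function
\[t \mapsto \sum_{x \in X_i\cap T_{t,R}} \Theta(x)\,\langle \eta_x, E_1\rangle\]
(the signed flux through a slice) is constant in $t$. Since all angles are at most $\theta$, every edge crosses each slice transversally with $\langle\eta, E_1\rangle \in [\cos\theta, 1]$. Orient each edge rightward, so that no edge doubles back. Balancing at a singular vertex, projected onto $E_1$, gives
\[\sum_{\text{right}}\Theta_j\langle s_j,E_1\rangle = \sum_{\text{left}}\Theta_j|\langle s_j,E_1\rangle|.\]
The slice sum $S(t) = \sum \Theta$ is an integer that is constant between vertices, and the flux is constant and lies in $[\cos\theta\, S(t), S(t)]$. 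Since $S(t)$ is integer-valued and the flux converges to $k$ (weak convergence with $\int \phi(x_1)\,dX_i \to k\int\phi$), choosing $\theta$ small forces $S(t) = k$ for all $t$. This gives (4) in the form $\sum \Theta = k$, after accounting for the $\tfrac12$-convention at vertices, which needs care. The cardinality bound $|X_i\cap T_{t,R}|\le k$ follows because each point has multiplicity at least 1. Statement (3) follows by applying $S(t^-) = S(t^+)$ on either side of a vertex. The smallness of $\theta$ can be assumed without loss of generality, since the conclusion of (2) is monotone in $\theta$.
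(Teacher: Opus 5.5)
\textbf{Gap in the pointwise tilt bound (2).} Your argument for (1) is fine, and so is your argument for (3)--(4) \emph{given} (2). But you flag the upgrade from $\int\phi\,|\eta^\perp|^2\,dX_i\to 0$ to a pointwise bound as the main obstacle, and what you propose does not resolve it.

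\emph{The chain argument.} The chain of tilted edges is strictly monotone in $\langle x,w\rangle$, so it \emph{does} terminate: it leaves through an end face of the tube. Balancing at a vertex only guarantees the continuing edge a $w$-component of about $1/(2k)$ times the incoming one. The number of vertices is not bounded a priori (bounding it is the point of the paper), so the exit tilt can be arbitrarily small and no contradiction arises.

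\emph{The ``more precisely'' step.} For $Z=\phi(x_1)w$ one has $\divv_X Z=\phi'(x_1)\langle\eta,E_1\rangle\langle\eta,w\rangle$, which is not zero unless $\phi$ is constant. With $\phi$ constant, the boundary term involves the pointwise tilts and slice multiplicities on the end faces. Those are exactly what you do not yet control, so the bound is circular.

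\emph{Cancellation.} Even a genuinely small net $w$-flux says nothing about a single edge, because oppositely tilted edges cancel. A slice through an X-shaped crossing of two steep segments of equal multiplicity and opposite $w$-slopes has net $w$-flux zero. Integrality of multiplicities does nothing against this.

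\textbf{What the paper uses instead.} There are two ingredients.

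First, good end slices. A Chebyshev argument using the mass bound and the $L^2$ tilt bound produces regular times $t_1^*<t_2^*$ just outside $\bar I$ at which the slice has total multiplicity $O(k)$ and pointwise squared tilt at most three times its average (Lemma~\ref{find-point}). The $L^2$ tilt bound comes from Lemma~\ref{tilt-control}; your varifold-convergence statement could substitute for it.

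Second, a half-space cut rather than a slab (Lemma~\ref{brussel-sprout-lemma}). Take a regular $y$ whose tangent $\eta$ has $\langle\eta,w\rangle=\tau>0$ with $w\perp E_1$. Apply Proposition~\ref{first-variation} with the constant field $w$ on $\{x\in T_{(t_1^*,t_2^*),r}:\langle x,w\rangle<\langle y,w\rangle\}$. Every end on the cutting face exits in the $+w$ direction, so nothing cancels there. This gives $\tau\le\sum_e\Theta(e)\,|\langle\eta_e,w\rangle|$, summed over ends on the two chosen slices, which is small. Your ``vertex maximizing $\langle x,w\rangle$'' gestures at this, but the cut must be at the level of $y$ itself, and the boundary flux must be controlled through the specially chosen slices.

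\textbf{Once (2) is established.} Your $E_1$-flux argument for (4) is then a legitimate alternative to the paper's minimal-counterexample argument via Lemma~\ref{same-mult}. Note that (3) is a per-vertex statement. If several vertices share a slice, $S(t^-)=S(t^+)$ alone is not enough; use your per-vertex balancing identity together with $M,M'\le k$ and $\cos\theta>\frac{k-1}{k}$.
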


To prove the structure theorem, we need a series of lemmas. First, we prove that weak convergence implies local Hausdorff convergence.

\begin{lemma}[Local Hausdorff Convergence to the Line] \label{hausdorff-convergence}
    Suppose that $(X_i)$ is a sequence of ISGNs in $\mathbb{R}^n$ converging weakly to the multiplicity $k$ $E_1$-axis. Let $L \subset \mathbb{R}^n$ denote the $E_1$ axis, and let $K \subset \mathbb{R}^n$ be any compact set. Then
    \[\sup \, \{d(x,L): x \in K \cap X_i\} \to 0.\]
\end{lemma}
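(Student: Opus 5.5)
Suppose, for contradiction, that the statement fails. Then there are $\delta > 0$, a subsequence (still denoted $X_i$), and points $x_i \in K \cap X_i$ with $d(x_i, L) \geq \delta$. By compactness of $K$ we may pass to a further subsequence with $x_i \to x_\infty \in K$, and then $d(x_\infty, L) \geq \delta$. Set $\rho = \delta/4$. The ball $B_{2\rho}(x_\infty)$ has closure disjoint from $L$, so weak convergence gives $\mu_{X_i}(B_{2\rho}(x_\infty)) \to 0$. (Test against a continuous cutoff equal to $1$ on $B_{2\rho}(x_\infty)$ and supported away from $L$.) For large $i$ we have $B_\rho(x_i) \subset B_{2\rho}(x_\infty)$, and hence
\[\mu_{X_i}(B_\rho(x_i)) \to 0.\]

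To reach a contradiction, we want a lower bound $\mu_{X_i}(B_\rho(x_i)) \geq c\rho$ for a universal $c > 0$. This is the monotonicity formula for stationary integral varifolds. I will derive it from \Cref{first-variation} applied on balls $U = B_s(x_i)$ with the radial field $Z = x - x_i$. These balls are convex, so the hypotheses of the proposition hold. For $Z = x - x_i$ we have $\divv_X Z = 1$ at every point of $X$. Thus the left side of the first variation formula is $\mu_{X_i}(B_s(x_i)) =: f(s)$. The right side is $\sum_e \Theta(e)\, s\langle \eta_e, \nu\rangle$, where $\nu$ is the outward normal to $\partial B_s(x_i)$.

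The coarea formula gives $f'(s) = \sum_e \Theta(e)/\langle \eta_e,\nu\rangle$ for almost every $s$. Indeed, each end crosses the sphere at angle with cosine $\langle \eta_e,\nu\rangle$, and for almost every $s$ all of these cosines are positive, because the radial distance function along a line segment has nonzero derivative except at one point. Since $\langle \eta_e, \nu \rangle \leq 1$, we get
\[f(s) = s\sum_e \Theta(e)\langle \eta_e, \nu\rangle \leq s\sum_e \Theta(e)/\langle \eta_e,\nu\rangle = s f'(s).\]
Hence $f(s)/s$ is nondecreasing. One also checks that $f$ is absolutely continuous, since $\mu_{X_i}$ gives zero mass to spheres.

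The density at $x_i$ is $\lim_{s\to 0} f(s)/(2s) = \Theta_{X_i}(x_i)$. This is at least $1$ at regular points, since the multiplicity is a positive integer. At singular points it is at least $1$ as well, because at least two edges (counted with multiplicity) meet there, so $\Theta \geq 1$ by stationarity. Monotonicity then gives $f(\rho) \geq 2\rho\,\Theta_{X_i}(x_i) \geq 2\rho$. This contradicts $f(\rho) \to 0$.

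The main obstacle is justifying monotonicity rigorously with only \Cref{first-variation}: the coarea/derivative computation at almost every radius, and excluding radii where an edge is tangent to the sphere or passes through a vertex. Both kinds of exceptional radii form a finite set within a bounded range, by local finiteness. I would first try to handle this by restricting to the generic radii and using absolute continuity of $f$. A secondary point is that $\Theta \ge 1$ at singular points. This follows because a single edge of integer multiplicity ending at a vertex cannot be balanced, so at least two edge-ends with total multiplicity at least $2$ meet there.
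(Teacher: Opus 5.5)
Your proof is correct and follows essentially the same route as the paper: argue by contradiction, use monotonicity to get $\mu_{X_i}(B_\rho(x_i)) \geq 2\rho$, and contradict weak convergence with a continuous test function supported away from $L$. The differences are cosmetic: you pass to a convergent subsequence $x_i \to x_\infty$ where the paper uses a cutoff on the $\epsilon$-neighborhood of $K$, and you derive monotonicity (correctly) from \Cref{first-variation} where the paper cites it.
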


\begin{proof}
Assume not. After passing to a subsequence, there exists an $\epsilon > 0$ and $x_i \in X_i \cap K$ which satisfy $d(x_i,L) \geq 2\epsilon.$ By the monotonicity formula (see \cite{simon_gmt}, 4.3), we have $\mu_{X_i}(B_\epsilon(x_i)) \geq 2\epsilon$. Let $K_\epsilon = \{x \in \mathbb{R}^n: d(x,K) < \epsilon\}$. Then for any $x \in K$, $B_\epsilon(x) \subset K_\epsilon$.

Let $f \in C_c(\mathbb{R}^n)$ satisfy $f \geq 0$, $f(x) = 1$ if $x \in K_\epsilon$ and $d(x,L) \geq \epsilon$, and $f(x) = 0$ if $d(x,L) \leq \frac{\epsilon}{2}$. Then for any $i$,
\[\int_{\mathbb{R}^n} f \, dX_i \geq 2\epsilon,\]
but if $\mu$ is the measure associated to the multiplicity $k$-$E_1$ axis $L$,
\[\int_{\mathbb{R}^n} f \, d\mu = 0,\]
contradicting the weak convergence.
\end{proof}

Next we need a lemma that controls the integral tilt of an ISGN in terms of its distance to the $E_1$-axis.

\begin{definition}
    Let $\eta$ be a unit vector in $\mathbb{R}^n$. Its \emph{tilt} from the $E_1$-axis is the quantity $\sqrt{1 - \langle \eta,E_1\rangle^2}.$
\end{definition}

The tilt lies in $[0,1]$, is $0$ if $\eta$ is parallel to $E_1$ and is 1 if $\eta$ is perpendicular to $E_1$. Note that if $\phi = \angle(\eta, E_1)$ then the tilt is $\sqrt{1 - \langle\eta,E_1\rangle^2} = \sqrt{1 - \cos^2 \phi} = |\sin \phi|.$

\begin{lemma}[$L^2$ Tilt Control] \label{tilt-control}
Let $X$ be an ISGN in $\mathbb{R}^n$, and fix an open bounded $U \subset\subset \mathbb{R}^n$ which satisfies the hypotheses of Proposition \ref{first-variation}. As in that proposition, let $E$ be the set of ends of $X \cap U$, and for each $e \in E$, we have a point $x_e \in \partial U$, an outward pointing unit tangent $\eta_e \in T_{x_e}\mathbb{R}^n$, and a multiplicity $\Theta(e)$. For any $x \in \mathbb{R}^n$ let $r(x) = \sqrt{x_2^2 + \cdots + x_n^2}.$ Then
\[\int_U (1 - \langle \eta,E_1\rangle^2) \, dX \leq \sum_{e \in E} \Theta(e) \, r(x_e).\]
\end{lemma}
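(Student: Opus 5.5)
We apply Proposition \ref{first-variation} to the vector field that measures distance from the $E_1$-axis, namely the radial field
\[Z(x) = (0, x_2, \ldots, x_n) = x - x_1 E_1,\]
which is smooth on $\mathbb{R}^n$. Its covariant derivative is the constant linear map $\nabla_\eta Z = \eta - \langle \eta, E_1\rangle E_1$, i.e.\ orthogonal projection onto $E_1^\perp$.

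\emph{Step 1: compute the divergence.} At any regular point $x \in X$ with unit tangent $\eta$,
\[\divv_X Z = \langle \eta, \nabla_\eta Z\rangle = |\eta|^2 - \langle \eta,E_1\rangle^2 = 1 - \langle \eta, E_1\rangle^2.\]
This is exactly the integrand on the left-hand side. Singular points form a discrete set of $\mu_X$-measure zero, so they can be ignored.

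\emph{Step 2: apply first variation.} Since $U$ satisfies the hypotheses of Proposition \ref{first-variation}, we get
\[\int_U (1 - \langle\eta,E_1\rangle^2)\, dX = \sum_{e\in E} \Theta(e)\, \langle \eta_e, Z_{x_e}\rangle.\]

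\emph{Step 3: bound the boundary terms.} By Cauchy--Schwarz, and since $\eta_e$ is a unit vector,
\[\langle \eta_e, Z_{x_e}\rangle \le |Z_{x_e}| = r(x_e).\]
Because each $\Theta(e) > 0$, summing over $e \in E$ gives the claimed inequality.

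There is no genuine obstacle here. The only points that need care are that $\divv_X Z$ is defined $\mu_X$-almost everywhere, and that Proposition \ref{first-variation} requires only a $C^1$ field, which $Z$ is.
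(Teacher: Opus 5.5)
Your proof is correct and is essentially the paper's argument. The paper applies first variation to $x_jE_j$ for each $j \geq 2$ separately and then sums, which by linearity is the same as using your single field $Z = x - x_1E_1$. Its Cauchy--Schwarz step on $\sum_{j\geq 2}(x_e)_j\langle \eta_e,E_j\rangle$ is exactly your bound $\langle \eta_e, Z_{x_e}\rangle \leq r(x_e)$.
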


\begin{proof}
    Let $\eta$ be a choice of unit tangent on $X_\reg \cap U$ which is locally parallel on $X_\reg \cap U$. Note that $\eta$ may not agree with our choice of $\eta_e$ on the ends of $X$.

    Fix $j \in \{2,\ldots,n\}$ and consider the vector field $Z = x_jE_j$ on $\mathbb{R}^n$. Observe that on $X_\reg$, 
    \[\divv_X Z = \langle \eta, \nabla_\eta Z\rangle = \langle \eta, \eta[x_j] E_j + x_j \nabla_\eta E_j\rangle.\]
    But $\nabla_\eta E_j = 0$ and $\eta[x_j] = \langle \eta,E_j\rangle$ so 
    \[\divv_X Z = \langle \eta,E_j\rangle^2.\]
    Since $\eta$ is unique up to a sign, this is independent of our choice of $\eta$. Applying Proposition \ref{first-variation}, we obtain that 
    \[\int_U \langle \eta,E_j\rangle^2 \, dX = \sum_{e \in E} \Theta(e) (x_e)_j \langle \eta_e,E_j\rangle.\]
    Summing over $j$, and since $1 - \langle \eta,E_1\rangle^2 = \sum_{j=2}^n \langle \eta,E_j\rangle^2,$ we have 
    \[\int_U (1 - \langle \eta,E_1\rangle^2) \, dX = \sum_{e \in E} \Theta(e) \sum_{j=2}^n (x_e)_j \langle \eta_e,E_j\rangle \leq \sum_{e \in E} \Theta(e) \left|\sum_{j=2}^n (x_e)_j \langle \eta_e,E_j\rangle\right| \]
    \[\leq \sum_{e \in E} \Theta(e) \sqrt{\sum_{j=2}^n ((x_e)_j)^2\sum_{j=2}^n \langle \eta_e, E_j\rangle^2} \leq \sum_{e \in E} \Theta(e) r(x_e)\]
    and we are done.
\end{proof}

Next, let's guarantee that an ISGN which is contained in a tube and which only exits on the ends must span the whole tube.

\begin{lemma} \label{spans-tube}
Let $X$ be an ISGN in $\mathbb{R}^n$, $I \subset \subset \mathbb{R}$ an open interval of finite length, and $r > 0$. Suppose $X \cap T_{I,r} \neq \varnothing$ and $X \cap \partial T_{I,r} \subset T_{\partial I, r}$. Then, for each $t \in I$, $X \cap T_{t,r} \neq \varnothing$.  
\end{lemma}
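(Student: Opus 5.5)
We argue by contradiction, using the first variation formula (Proposition \ref{first-variation}) with the constant vector field $Z = E_1$, whose divergence along $X$ vanishes identically. Suppose some $t_0 \in I = (a,b)$ has $X \cap T_{t_0,r} = \varnothing$. Since $X \cap T_{I,r} \neq \varnothing$, pick $p \in X \cap T_{I,r}$; without loss of generality $\pi(p) > t_0$ (the case $\pi(p)<t_0$ is symmetric). Set $U = T_{(t_0,b),r}$, an open convex set, so it satisfies the hypotheses of Proposition \ref{first-variation} once we know $X \cap \overline{U}$ is compact and has finitely many ends. This holds because $X$ is locally finite and $\overline U$ is compact.

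The next step is to locate the ends of $X \cap U$. The boundary of $U$ consists of the two end caps $\{t_0\}\times \overline{B_r(0)}$ and $\{b\}\times\overline{B_r(0)}$, together with the lateral side $[t_0,b]\times \partial B_r(0)$. By hypothesis, $X \cap \partial T_{I,r} \subset T_{\partial I,r}$, so $X$ does not meet the lateral side, nor the rim $\{b\}\times\partial B_r(0)$. On the cap at $t_0$, $X$ does not meet the open disc $T_{t_0,r}$ by assumption. It could only touch the rim $\{t_0\}\times \partial B_r(0)$, but that rim lies in $\partial T_{I,r} \setminus T_{\partial I, r}$ (since $t_0 \in I$), so it is excluded too. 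Hence every end of $X\cap U$ limits to a point of the open disc $T_{b,r}$.

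Now apply first variation with $Z=E_1$. Since $\nabla E_1 = 0$, the left side $\int_U \divv_X E_1\, dX$ is zero, giving
\[0 = \sum_{e\in E}\Theta(e)\langle \eta_e, E_1\rangle.\]
Each end exits through the cap $\{x_1=b\}$ from inside $\{x_1<b\}$ with outward tangent $\eta_e$, so $\langle\eta_e,E_1\rangle \ge 0$. It is strictly positive, because a segment lying inside the hyperplane $\{x_1=b\}$ cannot approach it from $U$. All terms are therefore positive, so $E = \varnothing$ and $X\cap U$ has no ends.

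The main obstacle is then to rule out a nonempty compact piece $X \cap U$ with no ends. To handle it, apply first variation again, with $Z = x_1 E_1$ or with the position field $Z = x$, for which $\divv_X x = 1$. With no boundary terms, this gives
\[\mu_X(U) = \int_U \divv_X x \, dX = 0,\]
contradicting $p \in X \cap U$, which forces $\mu_X(U) > 0$ since $X \cap U$ contains a nondegenerate geodesic piece near $p$. Equivalently, a nonempty stationary network with no ends inside a bounded set is impossible, by the convex hull property; one can also see this by looking at the point of $X\cap\overline U$ maximizing $x_1$, where balancing fails. The only care needed is the bookkeeping for ends that limit exactly to rim points, and we have shown above that there are none.
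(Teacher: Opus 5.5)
Your proof is correct, but it takes a different route from the paper's.

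\textbf{The paper's route.} The paper argues topologically. It shows that $\pi(X\cap T_{I,r})$ is open and closed in $I$. Closedness comes from compactness together with the fact that $X$ avoids the lateral boundary. Openness comes from a local use of the balancing condition on the components of $X\cap T_{I,r}$ that meet the slice $T_{t,r}$, together with the assertion that such a component cannot be trapped inside the slice.

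\textbf{Your route.} You argue globally on the half-tube $U=T_{(t_0,b),r}$. The parallel field $E_1$ has zero divergence along $X$, and every end of $X\cap U$ must leave through the cap $T_{b,r}$ with $\langle\eta_e,E_1\rangle>0$, so there are no ends. The position field, with $\divv_X x=1$, then gives $\mu_X(U)=0$, contradicting $p\in U$.

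\textbf{Comparison.} Your two first-variation identities are the integral forms of the two facts the paper's openness step uses only implicitly: conservation of $E_1$-flux through slices, and the impossibility of a nonempty compact stationary piece without ends. So your version is more self-contained given Proposition \ref{first-variation}, and it does not need connectedness of $I$. It also automatically handles awkward configurations, such as pieces of $X$ perpendicular to $E_1$ or components whose extremal time is exactly $t$. The paper's version has the advantage of being purely local and topological.

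\textbf{Two slips in your last paragraph.} Your main line does not depend on either.
\begin{enumerate}
\item The field $x_1E_1$ alone only yields $\int_U\langle\eta,E_1\rangle^2\,dX=0$, i.e.\ that $X\cap U$ is perpendicular to $E_1$. It is the full position field that gives $\mu_X(U)=0$.
\item At a point maximizing $x_1$, balancing need not fail, since all edges there may lie in the hyperplane $\{x_1=\mathrm{const}\}$. That remark should instead use a point of the compact set $X\cap U$ maximizing $|x|^2$. There every edge direction $s$ satisfies $\langle s,x\rangle<0$, which is incompatible with balancing.
\end{enumerate}
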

\begin{proof}
    Suppose not. Since $I$ is connected, it suffices to show that the set $\pi(X \cap T_{I,r}) \subset I$ is both open and closed relative to $I$.
    
First, we show it is open. Fix $t \in \pi(X \cap T_{I,r})$, and let $Z$ be minimal $(X \cap T_{I,r})$-clopen set of $X \cap T_{I,r}$ containing $X \cap T_{t,r}$. If $Z \subset T_{t,r}$, then $Z$ must exit the tube $T_{I,r}$ along the neck at time $t$, contradiction. Thus there exists an $x \in Z$ at which there is a geodesic in $X$ leaving $x$ going positively or negatively in the $E_1$-direction. By the balancing condition, it follows that $X$ extends in both directions, which shows that $t$ lies in the interior of $\pi(X \cap T_{I,r})$.

Now we show it is closed. Fix $t_i \in \pi(X \cap T_{I,r})$ with $t_i \to t \in I$. Then select $x_i \in X \cap T_{t_i,r}$. Passing to a subsequence, $x_i \to x$, where $x \in \overline{T_{I,r}}$. But $\pi(x) = t \in I$ and $x \in X$. If $x \notin X \cap T_{I,r}$, then we must have $r(x)=r$, contradicting our assumption that $X \cap \partial T_{I,r} \subset T_{\partial I, r}$.
\end{proof}

Next, given an $L^2$ bound on the tilt, we want to find times which are regular, have small tilt, and have bounded total multiplicity. 

\begin{lemma} \label{find-point}
    Let $X$ be an ISGN in $\mathbb{R}^n$ and suppose that $I \subset\subset \mathbb{R}$ is open and bounded (not necessarily connected) with finite total length $L > 0$, and let $r > 0$. Suppose that for all $t \in I$, $X \cap T_{t,r}$ is nonempty. Further suppose that $\delta,C > 0$ and that we have an $L^2$ tilt bound
    \[\int_{T_{I,r}} (1 - \langle \eta,E_1\rangle^2) \, dX \leq \delta L\]
    as well as a mass bound
    \[\mu_X(T_{I,r}) \leq CL.\]
    Then there exists a $t^* \in I$ such that $X \cap T_{t^*,r}$ contains no singular points of $X$, and satisfies
    \[\sum_{x \in X \cap T_{t^*,r}} \Theta(x) \leq 3C\]
    and
    \[\max_{x \in X \cap T_{t^*,r}} \, (1 - \langle \eta_x, E_1\rangle^2) \leq 3\delta.\]
\end{lemma}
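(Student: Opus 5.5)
The plan is a Chebyshev/Markov-type averaging argument in the variable $t = \pi(x)$, using the coarea formula for the projection $\pi$ restricted to $X$. For $t \in I$, set
\[
m(t) = \sum_{x \in X \cap T_{t,r}} \Theta(x), \qquad \tau(t) = \sum_{x \in X \cap T_{t,r}} \Theta(x)\,(1 - \langle \eta_x, E_1\rangle^2).
\]
These are defined for all but countably many $t$. Indeed, since $X$ is locally finite, $X_\sing \cap \overline{T_{I,r}}$ is countable (in fact finite on compacts). The set of $t$ for which $X \cap T_{t,r}$ contains a segment perpendicular to $E_1$ is also countable, since each edge lies in one slice at most. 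Discarding these countably many bad times loses nothing in measure.

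On each regular edge, with unit tangent $\eta$, the coarea factor of $\pi$ is $|\langle \eta, E_1\rangle| \le 1$. Hence for any nonnegative $g$,
\[
\int_{T_{I,r}} g\,|\langle\eta,E_1\rangle|\,dX = \int_I \sum_{x\in X\cap T_{t,r}} \Theta(x) g(x)\,dt.
\]
Taking $g = 1$ and using $|\langle \eta, E_1\rangle| \le 1$ together with the mass bound gives
\[
\int_I m(t)\,dt \le \mu_X(T_{I,r}) \le CL.
\]
Taking $g = 1-\langle\eta,E_1\rangle^2$ gives
\[
\int_I \tau(t)\,dt \le \int_{T_{I,r}} (1-\langle\eta,E_1\rangle^2)\,dX \le \delta L.
\]

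By Markov's inequality, $|\{t : m(t) > 3C\}| < L/3$ and $|\{t : \tau(t) > 3\delta\}| < L/3$. Together with the null set of bad times, these exceptional sets have total measure strictly less than $L = |I|$. So there is a $t^* \in I$ that is good, with $m(t^*) \le 3C$ and $\tau(t^*) \le 3\delta$. Since $X \cap T_{t^*,r}$ is nonempty by hypothesis and each point there has $\Theta(x) \ge 1$ by integrality, every $x$ in the slice satisfies
\[
1 - \langle\eta_x,E_1\rangle^2 \le \Theta(x)\,(1-\langle\eta_x,E_1\rangle^2) \le \tau(t^*) \le 3\delta.
\]
This gives the tilt conclusion, and $m(t^*) \le 3C$ gives the multiplicity conclusion.

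The main obstacle is the rigorous justification of the coarea identity on the graph and the claim that the bad times are measure zero. Local finiteness on the compact closure $\overline{T_{I,r}}$ makes this elementary: apply the change of variables $t = \pi(\gamma(s))$ edge by edge, with Jacobian $|\langle \eta, E_1\rangle|$, and sum. Edges perpendicular to $E_1$ contribute zero to the left side and occupy a single slice, so they are harmless. If $X \cap \overline{T_{I,r}}$ had infinitely many edges accumulating at the boundary, I would first exhaust $I$ by compact subintervals and apply monotone convergence.
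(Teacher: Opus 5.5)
Your proposal is correct and follows essentially the same route as the paper. Both arguments do a Markov/Chebyshev averaging in $t=\pi(x)$ over $I$, use that projection to the $E_1$-axis decreases length to bound $\int_I m(t)\,dt$ and the integrated tilt, and with threshold factor $3$ obtain a positive-measure set of good times, after which the finitely many singular (or perpendicular-edge) times are discarded. The only difference is cosmetic: you average the multiplicity-weighted tilt $\tau(t)$ and then use $\Theta\geq 1$ to pass to the maximum, which makes explicit the comparison the paper uses implicitly when it integrates $A(t)=\sup_{x}(1-\langle\eta_x,E_1\rangle^2)$ directly.
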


\begin{proof}
    For any $t \in I$ let
    \[m(t) = \sum_{x \in X \cap T_{t,r}} \Theta(x).\]
    By our assumption, we have $m(t) \geq 1$ for all $t \in I$. Note that if $t \in I$ has $ \#(X \cap T_{t,r}) = \infty$, then $m(t) = \infty$; this corresponds to segments of $X$ which are perpendicular to $E_1$. By local finiteness, $m(t) = \infty$ for only finitely many $t \in I$.
    
    For any $t \in I$, let
    \[A(t) = \sup_{x \in X \cap T_{t,r}} (1 - \langle \eta_x,E_1\rangle^2)\]
    if $X \cap T_{t,r}$ contains no singular points, and arbitrarily at other $t$ (of which there are finitely many).
    Fix $\epsilon > 0$ and let $J_1 = \{t \in I: m(t) > \frac{C}{\epsilon}\}.$ Since projection to the $E_1$ axis decreases length, we have
    \[\frac{C}{\epsilon}|J_1| \leq \int_{I} m(t) \, dt \leq \mu_X(T_{I,r}) \leq CL,\]
    so $|J_1| \leq \epsilon L$. Next let $J_2 = \{t \in I: A(t) > \frac{\delta}{\epsilon}\}.$ Similarly,
    \[\frac{\delta}{\epsilon}|J_2| \leq \int_{I} A(t) \, dt \leq \int_{T_{I,r}} (1 - \langle \eta,E_1\rangle^2) \, dX \leq \delta L,\]
    so $|J_2| \leq \epsilon L$. Taking $\epsilon = \frac{1}{3}$ we see that $|I \setminus J_1 \setminus J_2| \geq (1-\frac{2}{3})L > 0$. Since $X$ has finitely many singular points in $T_{I,r}$, take $t^* \in I \setminus J_1 \setminus J_2$ such that $X \cap T_{t^*,r}$ contains no singular points of $X$. Then $m(t^*) \leq 3C $ and $A(t^*) \leq 3\delta$, so we're done.
\end{proof}

The next lemma says that if we control the tilt on endpoints of a tube, then we control the tilt everywhere within the tube.

\begin{lemma} \label{brussel-sprout-lemma}
    Let $I = (t_1,t_2) \subset\subset \mathbb{R}$ be a bounded open interval, $r > 0$, $\delta > 0$, and $X \subset \mathbb{R}^n$ an ISGN which has the property that $X \cap \partial (T_{I,r}) \subset T_{\partial I, r}$. That is, $X$ exits the tube $T_{I,r}$ only ``along the ends''. As in Proposition \ref{first-variation} let $E$ be the set of ends of $X \cap T_{I,r}$. Suppose that for each $e \in E$, $r(\eta_e) \leq \delta$ (recall that $r(\eta) = \sqrt{\eta_2^2 + \cdots + \eta_n^2} = \sqrt{1 - \eta_1^2}$ is the tilt). Then at any regular point of $X \cap T_{I,r}$ any unit tangent $\eta$ to $X$ there has bounded tilt 
    \[r(\eta) \leq \delta \sum_{e \in E} \Theta(e).\]
\end{lemma}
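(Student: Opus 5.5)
The plan is to apply the First Variation formula (Proposition \ref{first-variation}) with a \emph{constant} vector field, on a convex region cut out of the tube by a hyperplane through the point of interest. Fix a regular point $p \in X \cap T_{I,r}$ with unit tangent $\eta$. If $r(\eta)=0$ there is nothing to prove, so assume $r(\eta)>0$. Let
\[w = \frac{\eta - \langle \eta,E_1\rangle E_1}{r(\eta)}.\]
This is a unit vector orthogonal to $E_1$, and $\langle \eta, w\rangle = r(\eta)$. Replacing $\eta$ by $-\eta$ if necessary, we may take $\langle \eta, w\rangle>0$.

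Let $H=\{x:\langle x-p,w\rangle=0\}$ and $U = T_{I,r}\cap\{x : \langle x-p,w\rangle>0\}$. The set $U$ is convex and bounded, so it satisfies the hypotheses of Proposition \ref{first-variation}. Apply that proposition with $Z=w$. Since $\divv_X Z=0$, we get
\[\sum_{e\in E(U)}\Theta(e)\langle \eta_e, w\rangle = 0 .\]
The ends of $X\cap U$ are of two kinds.
\begin{itemize}
\item Ends on $\partial T_{I,r}\cap\{\langle x-p,w\rangle>0\}$. By hypothesis these are ends of $X\cap T_{I,r}$ lying in $T_{\partial I,r}$, so they form a subset of $E$. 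Since $w\perp E_1$, each satisfies $|\langle\eta_e,w\rangle|\le r(\eta_e)\le\delta$.
\item Ends on $H$. At such an end the outward unit tangent leaves the half-space $\{\langle x-p,w\rangle>0\}$. Hence $\langle \eta_x^{\mathrm{out}},w\rangle\le 0$, so all of these terms have the same sign.
\end{itemize}
The edge through $p$ is one of the crossings of $H$, with multiplicity at least $1$. Its outward tangent is $-\eta$, so it contributes $-\Theta\, r(\eta)\le -r(\eta)$. Because the other crossing terms have the same sign, we obtain
\[r(\eta)\le \sum_{x\in X\cap H}\Theta(x)\,|\langle\eta^{\mathrm{out}}_x,w\rangle| = \Big|\sum_{\text{tube ends}}\Theta(e)\langle\eta_e,w\rangle\Big| \le \delta\sum_{e\in E}\Theta(e).\]
This is the claimed bound.

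The main obstacle is making the ends on $H$ genuinely transverse crossings of regular edges, which is what Proposition \ref{first-variation} requires. Problems arise if $H$ contains a singular point of $X$ or contains a whole edge of $X$, or if $H$ passes exactly through a corner of the tube. To handle this, I would perturb: replace $p$ by a nearby point $p'$ on the same edge, and perhaps $w$ by a nearby unit vector $w'\perp E_1$. For all but finitely many such choices, $H$ avoids the finitely many singular points in $\overline{T_{I,r}}$ and contains no edge. For such a choice the inequality $\langle\eta,w'\rangle\le\delta\sum_e\Theta(e)$ holds with the edge through $p'$ still crossing. Letting $w'\to w$ then gives the bound for $r(\eta)$.

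Finally, the assumption $X\cap\partial T_{I,r}\subset T_{\partial I,r}$ is used in two places. It ensures that no ends of $X\cap U$ lie on the lateral boundary of the tube. It also ensures that every end on the tube boundary is an element of $E$, so that its tilt is controlled by $\delta$.
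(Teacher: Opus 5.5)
Your argument is correct and is essentially the paper's proof. The paper rotates so that $\eta$ lies in the $E_1E_2$-plane, applies first variation with the constant field $Z=E_2$ on $\{x\in T_{I,r}: x_2<y_2\}$ (the half-space opposite to yours), and splits the ends exactly as you do; the perturbation step you flag as the main obstacle is unnecessary, because the first variation formula tolerates singular points on $\partial U$, edges lying in $H$ are excluded from the open set $U$, and ends at points of $H\cap T_{\partial I,r}$ have the right sign under either of your two classifications.
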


\begin{proof}
Consider any $y \in X_\reg \cap T_{I,r}$, and let $\eta$ be a unit tangent to $X$ there at $y$. If $r(\eta) = 0$ there is nothing to prove, so assume that $r(\eta) > 0$. By a rotation of $\mathbb{R}^n$ which preserves $E_1$ (which leaves tilts invariant as well), we may assume that $\eta = (\eta_1,\eta_2,0,\ldots,0)$ where $\eta_2 > 0$. 

Let $R = \{x = (x_1,\ldots,x_n) \in T_{I,r}: x_2 < y_2\}$. Then $R$ is open and bounded in $\mathbb{R}^n$, so we can apply the first variation formula. Let $G$ be the set of ends of $X \cap R$. Then we can write $G = G_1 \sqcup G_2$, where $G_1 = \{e \in G: x_e \in T_{I,r}\}$ are ends whose points lie in the interior of $T_{I,r}$, and $G_2 = G \setminus G_1$. 
 Any $e \in G_1$ must exit $R$ along the face $\{x_2=y_2\} \cap T_{I,r} \subset \partial R$, so it must have $\langle \eta_e,E_2\rangle > 0$. Any $e \in G_2$ has $x_e \in \partial T_{I,r}$, so $x_e \in T_{\partial I,r}$ by our assumption, and we know that $r(\eta_e) \leq \delta$.
 
 Apply Proposition \ref{first-variation} with constant vector field $Z = E_2$.
We obtain that
\[0 = \sum_{e \in G} \Theta(e) \langle \eta_e,E_2\rangle = \sum_{e \in G_1} \Theta(e) \langle \eta_e ,E_2\rangle + \sum_{e \in G_2} \Theta(e) \langle \eta_e,E_2\rangle \geq \langle \eta_y,E_2 \rangle + \sum_{e \in G_2} \Theta(e)\langle \eta_e,E_2 \rangle. \]
Therefore
\[r(\eta_y) = \langle \eta_y,E_2\rangle \leq \sum_{e \in G_2} \Theta(e) | \langle \eta_e,E_2\rangle| \leq \delta \sum_{e \in E} \Theta(e) \]
so we are done.
\end{proof}

The next two lemmas show that if the angular deviation of $X$ is controlled, then the total multiplicity left of a singular point and right of a singular point must be equal. We choose to work with angles from now on instead of tilts, at the cost of a few constant factors.

\begin{lemma} \label{two-sides}
Let $U \subset\subset \mathbb{R}^n$ be open and let $X$ be an ISGN in $U$ with exactly one singular point $x$. Suppose further that no tangent to $X$ is perpendicular to $E_1$. Thus $X$ consists of a collection of geodesic segments $\mathcal{F}$ that enter $x$ from the left (in the $E_1$ direction) and a collection of geodesic segments $\mathcal{F}'$ that leave $x$ from the right. For a geodesic segment $I \in \mathcal{F} \cup \mathcal{F}'$, let $s_I \in S^{n-1}$ be the unique unit tangent vector to $I$ which points positively in the $E_1$ direction, let $\Theta(I)$ be the multiplicity of $I$, and let $M = \sum_{I \in \mathcal{F}} \Theta(I)$, $M' = \sum_{I' \in \mathcal{F}'} \Theta(I').$ If $M > M'$, then there exists $I \in \mathcal{F}$ such that $\angle(s_I,E_1) \geq \sqrt{\frac{2}{M}}$. Similarly if $M < M'$ then there exists $I' \in \mathcal{F}'$ such that $\angle(s_{I'},E_1) \geq \sqrt{\frac{2}{M'}}$.
    
\end{lemma}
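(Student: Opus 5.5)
Project the balancing condition at $x$ onto $E_1$ and exploit integrality. The outgoing unit tangents at $x$ are $-s_I$ for $I \in \mathcal{F}$ (these segments lie to the left of $x$) and $s_{I'}$ for $I' \in \mathcal{F}'$. Stationarity at $x$ therefore reads
\[\sum_{I' \in \mathcal{F}'} \Theta(I')\, s_{I'} = \sum_{I \in \mathcal{F}} \Theta(I)\, s_I .\]
Taking the inner product with $E_1$ and writing $\phi_J = \angle(s_J,E_1) \in [0,\pi/2)$ gives
\[\sum_{I' \in \mathcal{F}'} \Theta(I') \cos\phi_{I'} = \sum_{I \in \mathcal{F}} \Theta(I)\cos\phi_I .\]
The range of $\phi_J$ holds because $s_J$ points positively in the $E_1$ direction and no tangent is perpendicular to $E_1$.

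Suppose $M > M'$. Since the multiplicities are positive integers, $M - M' \geq 1$. The left-hand side is at most $M'$, so
\[\sum_{I\in\mathcal{F}} \Theta(I)(1-\cos\phi_I) = M - \sum_{I \in \mathcal{F}}\Theta(I)\cos\phi_I \geq M - M' \geq 1.\]
Let $\phi = \max_{I \in \mathcal{F}} \phi_I$. Then $M(1-\cos\phi) \geq 1$, so $1-\cos\phi \geq 1/M$.

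Next convert this to an angle bound using the elementary inequality $1 - \cos\phi \leq \phi^2/2$, valid for all real $\phi$. It gives $\phi^2/2 \geq 1/M$, i.e.\ $\phi \geq \sqrt{2/M}$, and the segment $I \in \mathcal{F}$ achieving the maximum is the one required. If $\sqrt{2/M}$ were to exceed $\pi/2$ (only when $M = 1$) the conclusion would be contradictory. In that case, however, $M' < 1$ forces $\mathcal{F}' = \varnothing$, and then the projected identity gives $\cos\phi_I = 0$ for the single segment, contradicting the non-perpendicularity hypothesis. So that case cannot occur, which is consistent.

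The case $M < M'$ is symmetric: exchange the roles of $\mathcal{F}$ and $\mathcal{F}'$ in the projected identity. The main (minor) obstacle is bookkeeping. One has to make sure the sign conventions for $s_I$ versus the outward tangents at $x$ are correct, and that integrality is what yields the gap $M - M' \geq 1$. Everything else is the one-line cosine estimate.
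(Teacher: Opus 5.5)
Your proof is correct and is essentially the paper's argument: project the balancing identity onto $E_1$, bound the $\mathcal{F}'$ side by $M'$, extract a segment of $\mathcal{F}$ with $\cos\phi_I$ small, and finish with $1-\cos\varphi \le \varphi^2/2$ and $M-M'\ge 1$. You take the maximal angle where the paper uses a weighted average to get $\langle s_I,E_1\rangle \le M'/M$, which is equivalent; your aside about $M=1$ is unnecessary and its premise is off, since $\sqrt{2} < \pi/2$, but it does not affect the argument.
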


\begin{proof}
   The stationarity condition asserts that
\[\sum_{I \in \mathcal{F}} \Theta(I) s_I = \sum_{I' \in \mathcal{F}'} \Theta(I') s_{I'}.\]
Without loss of generality assume that $M > M'$. Since $\langle s_{I},E_1\rangle \in (0,1]$ for all $I$, we have that
\[M' \geq \sum_{I' \in \mathcal{F}'} \Theta (I') \langle s_{I'} ,E_1\rangle = \sum_{I \in \mathcal{F}} \Theta(I)\langle s_{I},E_1\rangle.\]
Rearranging,
\[\sum_{I \in \mathcal{F}} \frac{\Theta(I)}{M}\langle s_{I},E_1\rangle \leq \frac{M'}{M}.\]
By properties of weighted averages, there exists $I \in \mathcal{F}$ such that
\[\langle s_I,E_1\rangle \leq \frac{M'}{M}.\]
Since $\cos \varphi \geq 1-\frac{\varphi^2}{2}$ for $\varphi \in [0,\frac{\pi}{2}]$, we get 
\[\frac{M'}{M} \geq \langle s_I,E_1\rangle \geq 1 - \frac{\angle(s_I,E_1)^2}{2},\]
so
\[\angle(s_I,E_1) \geq \sqrt{\frac{2(M-M')}{M}} \geq \sqrt{\frac{2}{M}}\]
as desired.
\end{proof}

\begin{lemma} \label{same-mult}
    Consider the same setup as in Lemma \ref{two-sides} but also assume that every unit tangent at a regular point of $X$ makes an angle at most $\theta$ with the $E_1$-axis. If $M \neq M'$, then there exists an $I \in \mathcal{F} \cup \mathcal{F}'$ such that 
    \[\angle(s_I,E_1) \geq \sqrt{\frac{2\cos \theta}{\min(M,M')}}.\]
\end{lemma}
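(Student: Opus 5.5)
The plan is to reduce to Lemma \ref{two-sides} and then use the angle hypothesis to compare $M$ with $\min(M,M')$. By the symmetry $E_1 \mapsto -E_1$, which swaps $\mathcal{F}$ and $\mathcal{F}'$, I will assume $M > M'$, so that $\min(M,M') = M'$. If $\theta \geq \frac{\pi}{2}$, then $\cos\theta \le 0$ and the claimed bound is vacuous: the right-hand side is at most $0$ when read as $0$. So I will also assume $\theta < \frac{\pi}{2}$, hence $\cos\theta > 0$.

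\textbf{Step 1.} Lemma \ref{two-sides} gives some $I \in \mathcal{F}$ with $\angle(s_I,E_1) \ge \sqrt{2/M}$. It therefore suffices to show
\[M \le \frac{M'}{\cos\theta},\]
since then $\frac{2}{M} \ge \frac{2\cos\theta}{M'}$.

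\textbf{Step 2.} To prove this inequality, I take the $E_1$-component of the balancing identity
\[\sum_{I \in \mathcal{F}} \Theta(I)\, s_I = \sum_{I' \in \mathcal{F}'} \Theta(I')\, s_{I'}.\]
On the right-hand side, $\langle s_{I'},E_1\rangle \le 1$, so the $E_1$-component is at most $M'$. On the left-hand side, the new hypothesis gives $\angle(s_I,E_1) \le \theta$, hence $\langle s_I,E_1\rangle \ge \cos\theta$, so the $E_1$-component is at least $M\cos\theta$. Together these give $M\cos\theta \le M'$, which is the desired inequality. As a byproduct, $M' > 0$ because $M \ge 1$, so the expression $\frac{2\cos\theta}{M'}$ is well defined.

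\textbf{Step 3.} The case $M < M'$ is identical with the roles of $\mathcal{F}$ and $\mathcal{F}'$ exchanged: Lemma \ref{two-sides} gives a segment in $\mathcal{F}'$ with angle at least $\sqrt{2/M'}$, and the same $E_1$-component comparison gives $M'\cos\theta \le M$.

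There is no real obstacle in this argument. The only subtlety is to notice that the bound from Lemma \ref{two-sides} is phrased in terms of the larger of the two multiplicity sums, and the angle hypothesis is exactly what converts it into a bound in terms of the smaller one, at the cost of the factor $\cos\theta$.
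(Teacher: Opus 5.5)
Your proposal is correct and follows the paper's proof essentially verbatim: assume $M > M'$, take the $E_1$-component of the balancing identity to get $M\cos\theta \le M'$, then combine with Lemma \ref{two-sides} to get $\angle(s_I,E_1) \ge \sqrt{2/M} \ge \sqrt{2\cos\theta/M'}$. Your remarks on the case $\theta \ge \frac{\pi}{2}$ and on $M' > 0$ are harmless additions that the paper leaves implicit.
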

\begin{proof}
    For any $I \in \mathcal{F} \cup \mathcal{F}'$, we have $\langle s_I,E_1\rangle \geq \cos \theta$. Without loss of generality, assume $M > M'$. Then we have
    \[M' \geq \sum_{I' \in \mathcal{F}'} \Theta(I') \langle s_{I'},E_1\rangle = \sum_{I \in \mathcal{F}} \Theta(I) \langle s_I,E_1\rangle \geq \cos \theta\sum_{I \in \mathcal{F}} \Theta(I) = M \cos \theta.\]

Thus $M \leq \frac{M'}{\cos \theta}$. From the conclusion of Lemma \ref{two-sides}, it follows that there exists $I \in \mathcal{F} \cup \mathcal{F}'$ such that 
\[\angle(s_I,E_1) \geq \sqrt{\frac{2}{M}} \geq \sqrt{\frac{2\cos \theta}{M'}}.\]
\end{proof}

Now we have enough to prove Theorem \ref{structure-theorem}.

\begin{proof}[Proof (of Theorem \ref{structure-theorem})] Fix $\epsilon \in (0,\frac{1}{4})$ and let $I' \subset \mathbb{R}$ be an $\epsilon$-neighborhood of $I$. That is, if $I = (a,b)$ then $I' = (a-\epsilon,b+\epsilon)$. Notice that given $r,\theta > 0$, we are free to choose $r', \theta'$ and prove the theorem for them, which implies the theorem for the given $r,\theta$. Thus we allow ourselves freely to decrease $r$ and $\theta$.

Decrease $\theta$ if necessary to ensure that
\[\theta < \sqrt{\frac{2\cos \theta}{3\frac{1+\epsilon}{1-4\epsilon}k}}\]
and that
\[ \cos \theta > \frac{k-1}{k}. \]
Then, decrease $r$ if necessary to ensure that 
\[12\sqrt{6} \left(\frac{1+\epsilon}{1-4\epsilon}\right)^{3/2}  \frac{k^{3/2}\sqrt{r}}{\epsilon} \leq \theta.\] 
Let $K = \overline{T_{I',R}}$. By local Hausdorff convergence of $X_i$ to the $E_1$-axis Lemma \ref{hausdorff-convergence} using $K$, we can choose $N$ such that for all $i \geq N$, $X_i \cap T_{I',R} \subset T_{I',r}$. 

The set $I' \setminus I$ has two connected components $(a-\epsilon,a]$ and $[b,b+\epsilon)$. Let $J_1 = (a-\epsilon,a)$ and $J_2 = (b,b+\epsilon)$ be their respective interiors. 

Then, by weak convergence of $X_i$ to the multiplicity $k$ $E_1$-axis, increase $N$ if necessary such that for all $i \geq N$ and $c \in \{1,2\}$,
\[(1-\epsilon)k |J_c| \leq \mu_{X_i}(T_{J_c,R}) \leq (1+\epsilon)k |J_c|.\]
In particular, this shows that $X_i \cap T_{I',r}$ is nonempty. Since $X_i \cap T_{I',R} \subset T_{I',r}$, $X_i$ cannot leave $T_{I',r}$ along its neck, so
by Lemma \ref{spans-tube}, for every $t \in I'$, $X_i \cap T_{t,r} \neq \varnothing$.

For any $t \in I'$, let $m_i(t) = \sum_{x \in X_i \cap T_{t,R}} \Theta(x)$. We make the following claim.

\ 

\textbf{Claim.} For all $i \geq N$, there exists an open interval $I''$ with $I \subset I'' \subset I'$ such that every tangent at a regular point of $X_i \cap T_{I'',R}$ makes an angle at most $\theta$ with the $E_1$-axis. Further, if $t^* \in \partial I''$ then
$m_i(t^*) \leq 3\frac{1+\epsilon}{1-4\epsilon} k.$

\

\emph{Sketch of proof of Claim.} Fix $i$. For $J_c$, $c \in \{1,2\}$, do the following. Use weak convergence to the multiplicity $k$ line to find two points $t_c < t_c'$ in $J_c$ close to either end of $J_c$ such that the local multiplicities $m_i(t_c)$ and $m_i(t'_c)$ are comparable to $k$. Then, use Lemma \ref{tilt-control} to show that the $L^2$ integral of tilt of $X_i$ over $(t_c,t'_c)$ is small. Then, plug that bound, along with a mass bound of $X_i$ over $(t_c,t'_c)$, into Lemma \ref{find-point} to obtain a point $t_c^*$ in $(t_c,t'_c) \subset J_c$ over which there is bounded multiplicity and bounded tilt. Let $I'' = (t_1^*,t_2^*).$ Since we have bounded tilt and multiplicity over $\partial I''$, we can apply Lemma \ref{brussel-sprout-lemma} to bound tilt everywhere on $I''$, which is equivalent to bounding the angle with the $E_1$-axis. 

\

\emph{Proof of Claim.} Fix $i$. Everything below will be carried out for $c \in \{1,2\}$. 
First we construct the points $t_c < t_c'$ in $J_c$. Observe that
\[(1+\epsilon)k|J_c| \geq \mu_{X_i}(T_{J_c,R}) \geq \int_{J_c} m_i(t) \, dt.\]
If we let
\[H_c = \left\{t \in J_c: m_i(t) > \frac{1+\epsilon}{\epsilon}k\right\},\]
then
\[(1+\epsilon)k|J_c| \geq \int_{J_c} m_i(t) \, dt \geq \frac{(1+\epsilon)k}{\epsilon}|H_c|,\]
so $|H_c| \leq \epsilon |J_c|.$ Thus $H_c$ cannot fill up either of the length $2\epsilon |J_c|$ intervals closest to either end of $J_c$. Let $t_c,t_c' \in J_c \setminus H_c$ be on the left and right sides of $J_c$, each within $2\epsilon |J_c|$ of $\partial J_c$. Then $t_c < t_c'$, $|t'_c - t_c| \geq (1-4\epsilon)|J_c| = \epsilon(1-4\epsilon)$, and $m_i(t_c),m_i(t'_c) \leq \frac{1+\epsilon}{\epsilon}k.$

Let $J'_c = (t_c,t_c')$. Next, we apply Lemma \ref{tilt-control} to the set $U_c = T_{J'_c,R}$, and we conclude that
\[\int_{U_c} (1-\langle \eta,E_1\rangle^2) \, dX_i \leq \sum_{e \in E_c} \Theta(e) r(x_e) \leq r\sum_{e \in E_c}\Theta(e) \leq 2kr \frac{1+\epsilon}{\epsilon},\]
where $E_c$ is the set of ends of $X_i \cap U_c$. Since $|J'_c| \geq \epsilon(1-4\epsilon),$ we have 
\[\int_{U_c} (1-\langle \eta,E_1\rangle^2) \, dX_i \leq 2kr\frac{1+\epsilon}{\epsilon} \leq 2kr \frac{1+\epsilon}{\epsilon^2(1-4\epsilon)} |J'_c|.\]

Now we can apply Lemma \ref{find-point} to the interval $J'_c$ with $\delta = \frac{2kr(1+\epsilon)}{\epsilon^2(1-4\epsilon)}$. Since
\[\mu_{X_i}(T_{J'_c,r}) \leq \mu_{X_i}(T_{J_c,r}) \leq (1+\epsilon)k |J_c| = \epsilon(1+\epsilon)k \leq \frac{1+\epsilon}{1-4\epsilon}k |J'_c|,\]
we can take $C = \frac{1+\epsilon}{1-4\epsilon}k$ in the mass bound. Therefore, by Lemma \ref{find-point}, there exists a $t^*_c \in J'_c \subset J_c$ such that the sets $X_i \cap T_{t_c^*,r}$ for $c=1,2$ have no singular points of $X_i$ and satisfy

\[\sum_{x \in X_i \cap T_{t_c^*,r}
} \Theta(x) \leq 3\frac{1+\epsilon}{1-4\epsilon} k\]
and
\[\max_{x \in X_i \cap T_{t_c^*,r}} (1-\langle \eta_x,E_1\rangle^2) \leq \frac{6(1+\epsilon)}{(1-4\epsilon)}\frac{kr}{\epsilon^2}.\]

Let $I'' = (t_1^*,t_2^*)$. Apply Lemma \ref{brussel-sprout-lemma}, where \[ \delta = \sqrt{\frac{6(1+\epsilon)}{(1-4\epsilon)}}\frac{\sqrt{kr}}{\epsilon} \]
is the upper bound on tilt on the endpoints. We conclude that every tilt at a regular point of $X_i \cap T_{I'',R}$ is at most

\[6\sqrt{6} \left(\frac{1+\epsilon}{1-4\epsilon}\right)^{3/2}  \frac{k^{3/2}\sqrt{r}}{\epsilon}. \]

Since $|\phi| \leq 2|\sin \phi|$ for any $\phi \in [0,\frac{\pi}{2}]$, we can bound the angle by two times the tilt. Therefore at every regular point of $X_i \cap T_{I'',R}$, the angle against the $E_1$-axis is at most

\[12\sqrt{6} \left(\frac{1+\epsilon}{1-4\epsilon}\right)^{3/2}  \frac{k^{3/2}\sqrt{r}}{\epsilon} \]
which is at most $\theta$ by assumption. That completes the proof of the claim.

\

Let's continue with the proof of Theorem \ref{structure-theorem}. To prove (3), suppose for contradiction it is false, and let $x \in X_i \cap T_{I'',R}$ be a singular point failing the condition in (3) which is minimal with respect to the $x_1$-coordinate (such a point exists, since by local finiteness, $X_i$ has finitely many singular points in $T_{I'',R}$). By minimality, the total multiplicity $M$ coming into $x$ from the left is at most $m_i(t_1^*) \leq 3\frac{1+\epsilon}{1-4\epsilon}k$. This contradicts Lemma \ref{same-mult}, by our assumption that $\theta < \sqrt{\frac{2\cos \theta}{3\frac{1+\epsilon}{1-4\epsilon}k}}$.

\ 

Finally we move to prove (4). From the previous point, we know that $m_i(t)$ is constant on $I''$, so it is constant on $I$. Since all tangents are within an angle $\theta$ with the $E_1$-axis, it follows that
\[m_i |I| \leq \mu_{X_i}(T_{I,R}) \leq \frac{m_i|I|}{\cos \theta}.\]
Letting $a_i = \frac{1}{|I|}\mu_{X_i}(T_{I,R}),$
we have that
\[m_i \leq a_i \leq \frac{m_i}{\cos \theta}.\]
 By weak convergence we have
\[a_i|I| = \mu_{X_i}(T_{I,R}) \to k|I|,\]
so $a_i \to k$. 
Therefore 
\[\liminf_{i \to \infty}m_i \geq \liminf_{i \to \infty} a_i \cos \theta  = k \cos \theta \]
and
\[\limsup_{i \to \infty}m_i \leq \limsup_{i \to \infty} a_i = k.\]
But the $m_i$ are integers. Since we assumed that $\cos \theta > \frac{k-1}{k}$, we get that 
$\liminf_{i \to \infty} m_i > k-1,$
so
$\liminf_{i \to \infty} m_i = k,$ so $\lim_{i \to \infty}m_i = k.$ Increase $N$ again if necessary such that for all $i \geq N$, $m_i = k$, and we are done.
\end{proof}

\section{Main Technical Theorem}
\label{sec:maintechnicaltheorem}

In this section, we introduce the main technical theorem, which bounds the total number of singular points of an integral stationary geodesic network in $\mathbb{R}^n$ which has sufficiently controlled structure.

\begin{definition}
    Let $X \subset \mathbb{R}^n$ be an ISGN and let $\theta \in [0,\frac{\pi}{2})$. We say that $X$ is $\theta$-\emph{flat} if every geodesic or ray in $X$ makes an angle at most $\theta$ with the $E_1$-axis.
\end{definition}

\begin{definition}
    Let $X \subset \mathbb{R}^n$ be an ISGN, and let $k \geq 1$ be an integer. We say that $X$ is a $k$-\emph{cover} if for every $t \in \mathbb{R}$, $|X \cap \pi^{-1}(t)| \leq k$ and 
    \[\sum_{x \in X \cap \pi^{-1}(t)} \Theta(x) = k.\] Further, we require that at any singular point $x \in X_\sing$, the total multiplicity of the geodesics entering $x$ from one side (with respect to the $E_1$ direction) is equal to the total multiplicity of the geodesics leaving $x$ from the other side.
\end{definition}

This means that $X$ consists of $k$ geodesics, counted with multiplicity, which interact via $X_\sing$. Now we have enough to state our main technical theorem.

\begin{theorem} \label{maintechnicaltheorem}
    For each $n \geq 2$ and $k \geq 1$, there exists a $\theta = \theta(n,k)$ and $C = C(n,k)$ with the following property. Suppose that $X \subset \mathbb{R}^n$ is an ISGN which is a $k$-cover, $\theta$-flat, and has $|X_\sing|< \infty.$ Then $|X_\sing| \leq C.$
\end{theorem}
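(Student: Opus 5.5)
The plan is to reduce to a finite-dimensional "particle" picture and run an induction on $k$ driven by convex quantities. Since $X$ is a $k$-cover and $\theta$-flat, with $|X_\sing|<\infty$, we can write $X$ over the $E_1$-axis as a finite collection of strands $t\mapsto (t,y_j(t))$, $y_j:\mathbb{R}\to\mathbb{R}^{n-1}$, carrying integer multiplicities $\Theta_j$ with $\sum_j\Theta_j=k$. Each $y_j$ is affine between consecutive singular times, and $|y_j'|\le\tan\theta$. At a singular time, groups of strands with a common position meet and re-split, and the total multiplicity is preserved on each side (the $k$-cover condition).

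\textbf{Step 1: conserved and monotone quantities.} I would apply Proposition \ref{first-variation} on slabs $T_{(a,b),R}$ with $R$ large.
\begin{itemize}
\item Constant fields $E_j$, $j\ge 2$, show that the transverse momentum $P=\sum_j \Theta_j \langle s_j,E_\perp\rangle$ (with $s_j$ the unit tangent) is independent of $t$.
\item The field $E_1$ shows that $\sum_j\Theta_j\langle s_j,E_1\rangle$ is also conserved.
\item Fields of the form $x_1 E_j - x_jE_1$ and $x_jE_j$, as in Lemma \ref{tilt-control}, give a linearly moving center of mass. They also show that $\frac{d}{dt}\sum_j\Theta_j\langle y_j, s_j^\perp\rangle$ equals the nonnegative tilt density $\sum_j\Theta_j|s_j^\perp|^2$ (up to $\cos\theta$ factors).
\end{itemize}
After translating and applying a shear/rotation we may take $P=0$ and the center of mass at the origin. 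The moment of inertia $\mathcal I(t)=\sum_j\Theta_j|y_j(t)|^2$ is then convex in $t$ (up to multiplicative errors $1+O(\theta^2)$) across singular points, not just between them. I would also define a spread $\spread(t)$, the diameter of $\{y_j(t)\}$, comparable to $\sqrt{\mathcal I}$ up to constants depending on $k$.

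\textbf{Step 2: induction on $k$ via clustering.} The case $k=1$ is trivial. If all strands coincide at some time, then $\mathcal I$ vanishes there. Convexity together with the conserved momentum then forces the whole network to be a single multiplicity-$k$ line, so no singular points occur. In general, at each time I would partition the strands into clusters at geometric scales separated by a large factor $\Lambda(n,k)$, so that each cluster is much tighter than its distance to the others. On a time interval where a cluster stays well separated from the rest, its strands form a $k'$-cover with $k'<k$. This cluster has its own approximately conserved momentum and convex inertia, with errors controlled by $\theta$ and $\Lambda^{-1}$, and the inductive hypothesis bounds its singular points by $C(n,k')$.

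\textbf{Step 3: counting merges.} It remains to bound how many times the cluster structure can change. I would introduce a collision potential, for example $\sum_{\text{clusters }A,B}\Theta_A\Theta_B\,\phi(|c_A-c_B|)$ built from cluster centers $c_A$, or simply the sorted list of scale ratios. The aim is to show that each merge/split event costs a definite amount of a monotone quantity: convexity of each sub-cluster's inertia together with momentum conservation implies that two clusters can approach, merge and separate only boundedly many times. This step is the main obstacle. The approximate conservation laws for sub-clusters degrade when clusters are at comparable scales, and interactions across nested scales have to be organized so that errors do not accumulate. My first attempt would be a multiscale "spread versus merge" dichotomy, choosing $\theta$ small in terms of $\Lambda$ and $k$. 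Either a cluster's spread is small relative to its neighbours, so the induction applies, or a genuine merge occurs, which decreases an integer-valued combinatorial potential bounded in terms of $k$. Multiplying the bounds then gives $C(n,k)$ of the form $n^{O(k)}(Ck)^{k^2}$.
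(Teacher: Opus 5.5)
The conserved and monotone quantities in your Step 1 are correct. The constant fields give the paper's conserved vector $W=\sum_{x\in X_t}\Theta(x)s_x$, and $\sum_{j\ge2}x_jE_j$ gives an exactly nondecreasing quantity $\sum\Theta\langle y,s^\perp\rangle$. Inducting on $k$ over disconnected pieces is also what the paper does. The gap is the decisive step, bounding how many times the network can reconnect: you leave it open, and the tools you suggest cannot close it.

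\textbf{Why the suggested tools fail.} The moment of inertia cannot see internal collisions. After your normalization $P=0$, between events $\mathcal I''=2\sum_j\Theta_j|y_j'|^2$, and up to relative errors $O(\theta^2)$ this is the conserved constant $4(k-|W|)$. So $\mathcal I$ is essentially one fixed parabola whatever the strands do. An integer-valued potential that decreases at every merge cannot be monotone, because merges and splits alternate, and nothing you state rules out unboundedly many merge--split cycles. Clusters defined by spatial scale are also ill-suited. The problem is dilation invariant, and for $n\ge 3$ strands can pass arbitrarily close without meeting, so changes in cluster structure need not correspond to singular points. The paper instead groups strands by connectivity of time-slabs $X_{[t_0,t]}$. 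A smaller error: coincidence of all strands at a \emph{singular} time does not force a single line. Two lines crossing at a point form a $2$-cover with one singular point.

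\textbf{What is missing.} You need a quantity whose total budget and per-event decrease live at the same scale, with every wrong-sign change of higher order. The paper uses the central tilt $\delta=\sqrt{2(k-|W|)}$ and the collision potential $\Phi(t)=\sum_{j\ge2}\sum_{\{x,x'\}\subset X_t}\Theta(x)\Theta(x')(\langle s_{x'},E_j\rangle-\langle s_x,E_j\rangle)\sign(x_j-x'_j)$. After centering and a generic rigid motion, $\Phi$ has these properties:
\begin{itemize}
\item it is piecewise constant, with $|\Phi|\le 2nk^2\delta$;
\item every jump, at singular points or at ``ghost-crossings'' (times when two strands share an $E_j$-coordinate), is at most $8nk^2\delta^3$;
\item at a singular point of strength $\beta\gg\delta^2$ it drops by roughly $\beta/\sqrt n$.
\end{itemize}
To produce strong singular points, the paper shows that the spread $\spread(Z)^2=\frac12\sum\Theta(x)\Theta(x')|s_x-s_{x'}|^2$ is conserved on connected pieces. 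It equals $k\delta^2-\delta^4/4$ for the whole network, and it is bounded by $F(k)$ times the largest strength of a singular point in a merging process. Hence each complete reconnection after a regular time contains a singular point of strength at least $\sqrt{k/2}\,\delta/F(k)$. Between reconnections the network is disconnected, so induction bounds the weak singular points, and hence the ghost-crossings, by $O(N)$. This gives $N\,c(n,k)\,\delta\le 4nk^2\delta+O((N+1)\delta^3)$, so $N\le C(n,k)$ once $\theta$ is small. Your argument needs this mechanism or an equivalent one; as written it does not establish the bound.
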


We remark that the hypothesis $|X_\sing| < \infty$ is not needed: if $X$ is a $k$-cover and $\theta(n,k)$-flat, we claim that $|X_\sing| < \infty$. Indeed, an ISGN $X \subset \mathbb{R}^n$ has finitely many singular points in any ball $U = B_R(0)$. One can then extend the geodesics from $X \cap U$ to rays, and these may intersect only finitely many times outside $U$. This yields an ISGN $X' \subset \mathbb{R}^n$ with finitely many singular points which agrees with $X$ on $U$. Observe that $X'$ is still $\theta(n,k)$-flat and a $k$-cover. Applying the theorem to $X'$, we see that $X'$ has at most $C(n,k)$ singular points, so $X$ has at most $C(n,k)$ singular points in $U$.
But the bound does not depend on $R$, so the total number of singular points of $X$ is bounded by $C(n,k)$, and is thus finite.

\section{Conserved Quantities}
\label{sec:conservedquantities}

In this section $X \subset \mathbb{R}^n$ is an ISGN which is a $k$-cover and $\theta$-flat, $\theta \in [0,\frac{\pi}{2})$, with $|X_\sing| < \infty.$

\begin{definition}
    For any $x \in X_\reg$, let $s_x \in \mathbb{R}^n$ be the unique unit vector tangent to $X$ at $x$ such that $\langle s_x,E_1\rangle > 0$.
\end{definition}

\begin{definition}
    For any $t \in \mathbb{R}$ let $X_t = X \cap \pi^{-1}(t)$. We call $X_t$ the \emph{time-slice} of $X$ at \emph{time} $t$.
    Since $X$ is a $k$-cover, for all $t$, we have $\sum_{x \in X_t} \Theta(x) = k.$
\end{definition}

\begin{definition}
    For any $A \subset \mathbb{R}$ let $X_A = X \cap \pi^{-1}(A)$. We call $X_A$ the \emph{time-slab} of $X$ over $A$.
\end{definition}

\begin{definition}
    Let $\tilde V = \pi(X_\sing)$, and let $\tilde U = \mathbb{R} \setminus \tilde V$. We call $\tilde V$ the set of \emph{singular times}, and $\tilde U$ the set of \emph{regular times}. Since $|X_\sing| < \infty$, $\tilde V$ is finite.
\end{definition}

\begin{definition}
    For any $t \in \tilde U$, let $W(t) = \sum_{x \in X_t} \Theta(x) \, s_x.$
\end{definition}

\begin{proposition} $W(t)$ is independent of $t$, so is an invariant of $X$, which we call $W$.
\end{proposition}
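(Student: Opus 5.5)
Since $\tilde V$ is finite, $\tilde U$ is a finite union of open intervals. I will show that $W$ is locally constant on $\tilde U$, and that $W(t^-) = W(t^+)$ across each singular time. Together these give that $W$ is constant on all of $\tilde U$.

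\emph{Within an interval of $\tilde U$.} Let $(a,b)$ be a connected component of $\tilde U$. The slab $X_{(a,b)}$ contains no singular points. The $k$-cover property bounds $|X_t| \leq k$, and $\theta$-flatness (with $\theta<\frac{\pi}{2}$) makes every edge transverse to the slices. Hence $X_{(a,b)}$ is a disjoint union of finitely many straight segments, each crossing every slice $\pi^{-1}(t)$, $t\in(a,b)$, exactly once. On each segment both $s_x$ and $\Theta$ are constant, so $W(t)$ is the same sum for every $t\in(a,b)$.

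\emph{Across a singular time.} Fix $\tau \in \tilde V$, and choose $t_- < \tau < t_+$ in the adjacent components of $\tilde U$. I apply Proposition \ref{first-variation} with $Z = E_j$ constant, so $\divv_X Z = 0$, on the open set $U = \pi^{-1}((t_-,t_+)) \cap B_\rho(0)$. The radius $\rho$ is taken large enough that $X_{[t_-,t_+]}$ lies inside $B_\rho(0)$; this is possible because this slab is a compact finite union of segments.

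A cleaner alternative is to avoid the balls and simply sum the balancing condition. Inside the slab, $X$ consists of segments joining slice $t_-$ to singular points at time $\tau$, segments joining those singular points to slice $t_+$, and possibly edges passing straight through. Adding the stationarity relation $\sum \Theta_i s_i = 0$ over all singular points at time $\tau$ does the job. Write each outgoing tangent as $+s$ for rightward edges and $-s$ for leftward edges. The relation then says that the $\Theta$-weighted sum of $s$ over edges entering from the left equals the same sum over edges leaving to the right. Edges that pass through unchanged contribute equally to both sides. This gives $W(t_-) = W(t_+)$.

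This balancing computation is where the real content lies. The main obstacle is bookkeeping: every edge leaving a singular point must actually reach the adjacent slices. Flatness rules out edges perpendicular to $E_1$. It also rules out edges joining two singular points at the same time $\tau$, since such an edge would be perpendicular to $E_1$. Consequently every edge incident to a singular point at time $\tau$ runs to the nearest slices $t_\pm$ without meeting another singular point. Combining local constancy with this jump condition at each of the finitely many singular times proves that $W$ is independent of $t$.
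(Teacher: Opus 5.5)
Your argument is correct and is essentially the paper's proof. You show $W$ is locally constant on the components of $\tilde U$, then sum the stationarity relation over the singular points at a singular time to see that the $\Theta$-weighted sum of rightward tangents entering from the left equals the sum leaving to the right, and your flatness bookkeeping (no vertical edges, no edge joining two singular points at the same time) supplies details the paper leaves implicit. The half-started first-variation paragraph should either be deleted or finished: with $Z=E_j$ and $\divv_X Z=0$ it would in fact give $W(t_-)=W(t_+)$ for any $t_-<t_+$ in $\tilde U$ in a single step.
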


\begin{proof}
    Observe that $W(t)$ is locally constant on $\tilde U$, so it suffices to show that $W(t)$ does not jump across a point $t \in \tilde V$. Fixing this $t$, it suffices to show that the terms coming from geodesics interacting in one singular point $x \in X_t$ have equal contribution on either side of $t$. Suppose that we have geodesics coming into $x$ from the left (in the $E_1$ direction), with rightward-pointing unit tangents $s_1,\ldots,s_m$ and multiplicities $\Theta_1,\ldots,\Theta_m$, and suppose we have geodesics leaving $x$ from the right, with rightward-pointing unit tangents $s_1', \ldots, s_{m'}'$ and multiplicities $\Theta'_1,\ldots,\Theta'_{m'}$. By the stationarity condition, we have 
    \[\sum_{i=1}^m \Theta_i \, s_i = \sum_{i=1}^{m'} \Theta_i' \, s_i',\]
    which precisely says that the contribution to $W$ is conserved across $t$.
\end{proof}

Observe that for any $s \in S^{n-1}$, $\langle W,s\rangle \leq k$, and that equality is attained if and only if every tangent to $X$ is parallel to $s$, i.e. $X$ consists of $k$ parallel lines tangent to $s$, counted with multiplicity. Since $\langle s_x,E_1\rangle > 0$ for all $x \in X_\reg$, we have $\langle W,E_1\rangle > 0.$

\begin{definition}
    For any unit vector $s \in S^{n-1}$, we define the $s$-\emph{tilt} $A_s = \sqrt{2(k-\langle W,s\rangle)}.$
\end{definition}

\begin{lemma} \label{tilt-formula}
    For any $s \in S^{n-1}$ and $t \in \tilde U$, we have
    \[A_s^2 = \sum_{x \in X_t} \Theta(x) \, |s_x-s|^2.\]
\end{lemma}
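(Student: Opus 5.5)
The identity is a direct computation once we fix a regular time $t \in \tilde U$ and use the fact that $W(t) = W$ (the Proposition above). We expand the squared norms, use that all the vectors involved are unit vectors, and then substitute the $k$-cover condition and the definition of $W$.

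First, fix $t \in \tilde U$. Since $t$ is a regular time, every $x \in X_t$ is a regular point of $X$. So $s_x$ is a well-defined unit vector and $\Theta(x)$ is the positive integer multiplicity of the edge through $x$.

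Next, for each $x \in X_t$, expand
\[|s_x - s|^2 = |s_x|^2 - 2\langle s_x, s\rangle + |s|^2 = 2 - 2\langle s_x, s\rangle,\]
using $|s_x| = |s| = 1$.

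Then multiply by $\Theta(x)$ and sum over the finitely many points of $X_t$:
\[\sum_{x \in X_t} \Theta(x)\,|s_x - s|^2 = 2\sum_{x \in X_t}\Theta(x) - 2\Big\langle \sum_{x \in X_t}\Theta(x)\, s_x,\ s\Big\rangle.\]
The $k$-cover condition gives $\sum_{x \in X_t}\Theta(x) = k$. The definition of $W(t)$, combined with the Proposition that $W(t) = W$, identifies the inner sum as $W$. So the right-hand side equals $2(k - \langle W, s\rangle) = A_s^2$.

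There is no real obstacle here. The only point needing care is that $t$ be regular, so that every point of $X_t$ has a well-defined $s_x$ and the sum defining $W(t)$ makes sense. As a byproduct, the identity shows $k - \langle W, s\rangle \ge 0$, so $A_s$ is a well-defined nonnegative real number. This is consistent with the earlier observation that $\langle W, s\rangle \le k$.
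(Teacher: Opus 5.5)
Your proposal is correct and follows the paper's own proof essentially verbatim. You expand $|s_x-s|^2 = 2-2\langle s_x,s\rangle$, sum with multiplicity, then use $\sum_{x\in X_t}\Theta(x)=k$ from the $k$-cover condition and $W(t)=W$ to get $2k-2\langle W,s\rangle = A_s^2$.
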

\begin{proof}
    Since all $s_x$ and $s$ are length one, we have $|s_x-s|^2 = 2-2\langle s_x,s\rangle.$ Therefore for any $t \in \tilde U$,
    \[\sum_{x \in X_t} \Theta(x) \, |s_x-s|^2 = \sum_{x \in X_t} \Theta(x) \, (2 - 2\langle s_x,s\rangle) = 2k - 2 \langle W,s\rangle = A_s^2.\]
\end{proof}

\begin{definition}
    The \emph{central direction} of $X$ is $S = \frac{W}{|W|} \in S^{n-1}.$ 
\end{definition}

\begin{lemma}
    The central direction satisfies $\angle(S,E_1) \leq \theta.$
\end{lemma}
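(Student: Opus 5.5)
The plan is to show that $W$ lies in the convex cone generated by vectors within angle $\theta$ of $E_1$, and then to use the fact that this set is convex. Fix any regular time $t \in \tilde U$. Then
\[W = \sum_{x \in X_t} \Theta(x)\, s_x,\]
where each $\Theta(x) > 0$. Since $X$ is $\theta$-flat, every $s_x$ makes an angle at most $\theta$ with $E_1$. In particular $\langle s_x, E_1\rangle \geq \cos\theta > 0$, because $\theta < \frac{\pi}{2}$.

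Consider the closed cone
\[C_\theta = \{v \in \mathbb{R}^n : \langle v, E_1\rangle \geq |v|\cos\theta\},\]
which consists of $0$ together with the nonzero vectors making angle at most $\theta$ with $E_1$. Each $s_x$ lies in $C_\theta$. The key step is that $C_\theta$ is closed under addition and under multiplication by nonnegative scalars. Positive scaling is immediate. For sums, take $u, v \in C_\theta$; then
\[\langle u+v, E_1\rangle = \langle u, E_1\rangle + \langle v, E_1\rangle \geq (|u|+|v|)\cos\theta \geq |u+v|\cos\theta,\]
by the triangle inequality and $\cos\theta \geq 0$. It follows that $W \in C_\theta$.

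We also know $\langle W, E_1\rangle > 0$, which was observed just before this lemma, so $W \neq 0$ and $S = W/|W|$ is defined. Dividing the cone inequality for $W$ by $|W|$ gives $\cos\angle(S, E_1) = \langle S, E_1\rangle \geq \cos\theta$. Since $\theta \in [0, \frac{\pi}{2})$ and arccosine is decreasing, this yields $\angle(S, E_1) \leq \theta$.

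There is no real obstacle here. The only point that needs care is the convexity of the cone, which is exactly where the hypothesis $\cos\theta \geq 0$ is used.
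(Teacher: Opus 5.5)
Your proof is correct and is essentially the paper's argument. The paper bounds $\langle W,E_1\rangle \geq k\cos\theta$ term by term and then uses $|W| \leq k$; your cone-closure step, with the triangle inequality and the termwise bound $\langle s_x,E_1\rangle \geq \cos\theta$, is exactly this computation phrased as convexity of $C_\theta$.
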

\begin{proof}
    Fix $t \in \tilde U$ and let $s_1,\ldots,s_k$ be the unit tangents to $X_t$, repeated with multiplicity. Then $W = \sum_{i=1}^k s_i$. Since $X$ is $\theta$-flat, $\langle s_i,E_1\rangle \geq \cos \theta$ for all $i$. Since $|W| \leq k$, we have 
    \[\langle S,E_1\rangle = \frac{1}{|W|} \sum_{i=1}^k \langle s_i,E_1\rangle \geq \frac{k \cos \theta}{|W|} \geq \cos \theta,\]
    so $\angle(S,E_1) \leq \theta.$
\end{proof}

Observe that if $\angle(S,E_1) = \theta$, then $|W| = k$, so $X$ consists of $k$ parallel lines, counted with multiplicity.

\begin{definition}
    Taking $s$ to be the central direction $S$, let $\delta := A_S$ be the \emph{central tilt} of $X$. From Lemma \ref{tilt-formula}, for all $t \in \tilde U$, we have
    \[\delta^2 = 2(k-|W|) = \sum_{x \in X_t} \Theta(x) \, |s_x-S|^2.\]
\end{definition}

\begin{lemma} \label{central-tilt-equivalence}
    The central tilt satisfies $0 \leq \delta \leq 2\sqrt{k} \, \theta$ and $\delta = 0$ if and only if $X$ consists of $k$ parallel lines, counted with multiplicity.
\end{lemma}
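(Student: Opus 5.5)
Fix a regular time $t \in \tilde U$ and write the unit tangents over $X_t$, repeated with multiplicity, as $s_1,\ldots,s_k$; then $W = \sum_{i=1}^k s_i$. The plan is to read both claims off the identity
\[\delta^2 = \sum_{x \in X_t} \Theta(x)\,|s_x - S|^2 = \sum_{i=1}^k |s_i - S|^2\]
from Lemma \ref{tilt-formula}.

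\textbf{Lower bound and the upper bound.} Nonnegativity is immediate from the definition $\delta = \sqrt{2(k-|W|)}$. For the upper bound, Lemma \ref{tilt-formula} holds for \emph{every} unit vector $s$, not only $S$. Since $S = W/|W|$ maximizes $\langle W, s\rangle$ over the sphere, we get
\[\delta^2 = 2(k - |W|) \le 2(k - \langle W, E_1\rangle) = \sum_{i=1}^k |s_i - E_1|^2 .\]
Each $s_i$ makes an angle $\phi_i \le \theta$ with $E_1$, by $\theta$-flatness. The chord length satisfies $|s_i - E_1| = 2\sin(\phi_i/2) \le \phi_i \le \theta$. Hence $\delta^2 \le k\theta^2$, so $\delta \le \sqrt{k}\,\theta \le 2\sqrt{k}\,\theta$.

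\textbf{Case $\delta = 0$.} Since each term of the sum is nonnegative, $\delta = 0$ forces every $s_x = S$ on $X_t$. Because $t \in \tilde U$ was arbitrary and $\tilde U$ is dense, every regular tangent of $X$ equals $S$. I then need to rule out singular points. Take any $x \in X_\sing$. All geodesics at $x$ are parallel to $S$, so the only available directions are $\pm S$. Then $x$ would have exactly one incoming and one outgoing edge, which by balancing carry equal multiplicity. So $x$ is not a genuine vertex; it is a regular point, contradicting $x \in X_\sing$ (this is the convention that $X_\reg$ consists of points where $X$ is locally a segment). Hence $X$ is a union of lines parallel to $S$, and the $k$-cover condition says they carry total multiplicity $k$.

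\textbf{Converse, and the main obstacle.} Conversely, if $X$ is $k$ parallel lines counted with multiplicity, then $|W| = k$ and so $\delta = 0$. The main subtlety is the step showing there are no singular points in the case $\delta = 0$. One might worry that two parallel lines could coincide and create a singular point. But a singular point needs at least two distinct directions or at least three edges. With all directions equal to $\pm S$ and a single line through $x$, there is no such configuration, so singular points cannot arise.
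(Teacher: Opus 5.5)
Your proof is correct. The equality case follows the paper, but you reach the upper bound by a different route.

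\textbf{Upper bound.} The paper bounds each term of $\delta^2=\sum_x\Theta(x)|s_x-S|^2$ pointwise. It uses the preceding lemma $\angle(S,E_1)\le\theta$ and the triangle inequality for angles to get $|s_x-S|\le\angle(s_x,E_1)+\angle(E_1,S)\le 2\theta$, hence $\delta\le 2\sqrt{k}\,\theta$.

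You instead observe that $s\mapsto A_s^2=2(k-\langle W,s\rangle)$ is minimized at $s=S=W/|W|$, so $\delta=A_S\le A_{E_1}$. You then bound $A_{E_1}^2=\sum_i|s_i-E_1|^2\le k\theta^2$ directly from flatness.

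Your route does not need the lemma on $\angle(S,E_1)$, and it gives the sharper $\delta\le\sqrt{k}\,\theta$. That would, for instance, halve the constant in the later estimate $\delta\le 6\sqrt{k}\,\theta$ used after rotating to a $3\theta$-flat network. The paper's pointwise route gives in exchange the $k$-independent bound $|s_x-S|\le 2\theta$ for each individual tangent.

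\textbf{Equality case.} The paper simply asserts that all $s_x=S$ means $X$ is $k$ parallel lines; you supply the justification that no singular points survive. Two small points need tightening.

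First, your claim of ``exactly one incoming and one outgoing edge'' uses embeddedness. Two geodesic edges leaving $x$ with the same initial direction would coincide, so there is at most one edge in each direction $\pm S$. Stationarity (or the $k$-cover condition) then forces both edges to be present with equal multiplicity. So $x$ is locally a segment and hence regular.

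Second, in your last paragraph, ``at least two distinct directions'' should read ``two non-collinear directions,'' since $\pm S$ are already two distinct directions.
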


\begin{proof}
Take any $t \in \tilde U$. Then 
\[\delta^2 = \sum_{x \in X_t} \Theta(x) \, |s_x - S|^2. \]
Recall that for any $a,b \in S^{n-1}, |a-b| \leq \angle(a,b).$ Thus, for any $x \in X_t$,
\[|s_x-S| \leq \angle(s_x,S) \leq \angle(s_x,E_1) + \angle(E_1,S) \leq 2\theta\]
so 
\[\delta^2 = \sum_{x \in X_t} \Theta(x) \, |s_x - S|^2 \leq \sum_{x \in X_t} \Theta(x) \, (2\theta)^2 = 4k\theta^2.\]
Clearly $\delta \geq 0$, and equality holds if and only if all $s_x$ are equal to $S$, in which case $X$ consists of $k$ parallel lines, counted with multiplicity.
\end{proof}

\begin{lemma}
    Let $s$ be any rightward pointing unit tangent to $X$ at a regular point. Then $|s-S| \leq \delta$, and equivalently $\langle s,S\rangle \geq 1 - \frac{\delta^2}{2}.$
\end{lemma}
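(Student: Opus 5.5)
The idea is to read the bound off the identity for the central tilt, using only that each summand is nonnegative and that multiplicities are positive integers. Let $s$ be a rightward pointing unit tangent at a regular point $y \in X_\reg$. Set $t = \pi(y)$. Since $\tilde V$ is finite and $y$ is regular, $t$ is a regular time. To see this, note that $y$ is not singular. If some other singular point $x'$ had $\pi(x') = t$, that would not affect $y$'s membership in $X_t$, but it would prevent us from applying the identity at $t$. In that case we move slightly along the geodesic edge through $y$ to a nearby point $y'$ on the same edge with $\pi(y') \in \tilde U$. This is possible because $\tilde V$ is finite and the edge is not perpendicular to $E_1$, by $\theta$-flatness with $\theta < \frac{\pi}{2}$. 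The tangent $s$ is unchanged along an edge, so we may assume $t \in \tilde U$ and $y \in X_t$ with $s_y = s$.

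Now apply the definition of the central tilt, that is, Lemma \ref{tilt-formula} with $s = S$:
\[\delta^2 = \sum_{x \in X_t} \Theta(x)\,|s_x - S|^2 \geq \Theta(y)\,|s_y - S|^2 \geq |s - S|^2.\]
The first inequality drops the nonnegative terms with $x \neq y$. The second uses that $X$ is integral and $y$ is regular, so $\Theta(y) \geq 1$. Taking square roots gives $|s - S| \leq \delta$.

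For the equivalent form, since $|s| = |S| = 1$ we have $|s - S|^2 = 2 - 2\langle s, S\rangle$. Hence $|s-S| \leq \delta$ holds if and only if $2 - 2\langle s,S\rangle \leq \delta^2$, which is the same as $\langle s,S\rangle \geq 1 - \frac{\delta^2}{2}$.

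The only step needing any care is making sure the slice through $y$ lies in $\tilde U$, handled by the perturbation along the edge above. Everything else is immediate from Lemma \ref{tilt-formula}.
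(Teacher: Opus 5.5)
Your proof is correct and follows the paper's argument: you read $|s-S|\le\delta$ off the identity $\delta^2=\sum_{x\in X_t}\Theta(x)\,|s_x-S|^2$ (Lemma \ref{tilt-formula} with $s=S$), then convert using $|s-S|^2=2-2\langle s,S\rangle$. Your extra steps, sliding along the edge to a time in $\tilde U$ and using $\Theta(y)\ge 1$, make explicit what the paper leaves implicit; your opening claim that $t=\pi(y)$ is automatically a regular time is false in general (another singular point can share that time), but the perturbation you give right after handles this.
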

\begin{proof}
From Lemma \ref{tilt-formula} we know that $|s - S| \leq \delta$. The second part follows because
\[\delta^2 \geq |s-S|^2 = 2-2\langle s,S\rangle.\]
\end{proof}

Further, we have

\begin{lemma} \label{spread-formula}
    At any $t \in \tilde U$,
    \[\frac{1}{2} \sum_{x,y \in X_t} \Theta(x) \Theta(y) \, |s_x-s_y|^2 = k \delta^2- \frac{\delta^4}{4}.\]
\end{lemma}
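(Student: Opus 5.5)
The identity is a direct algebraic computation at a fixed regular time $t \in \tilde U$, using only that every $s_x$ and $S$ are unit vectors. It rests on three facts already established: $\sum_{x \in X_t}\Theta(x) = k$ (the $k$-cover property), $\sum_{x\in X_t}\Theta(x)\, s_x = W$ (the conserved quantity), and $\delta^2 = 2(k-|W|)$ (the definition of central tilt).

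First, expand each term as $|s_x - s_y|^2 = 2 - 2\langle s_x, s_y\rangle$. Summing with weights $\Theta(x)\Theta(y)$ over all ordered pairs $x,y \in X_t$, including $x=y$ (those terms vanish anyway), gives
\[
\sum_{x,y\in X_t}\Theta(x)\Theta(y)\,|s_x-s_y|^2 \;=\; 2\Big(\sum_x \Theta(x)\Big)^2 - 2\Big\langle \sum_x\Theta(x)s_x,\ \sum_y \Theta(y)s_y\Big\rangle \;=\; 2k^2 - 2|W|^2 .
\]
Halving, the left-hand side of Lemma \ref{spread-formula} equals $k^2 - |W|^2$.

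Next, substitute $|W| = k - \frac{\delta^2}{2}$, which is just a rearrangement of $\delta^2 = 2(k-|W|)$. Then
\[
k^2 - |W|^2 = k^2 - \Big(k - \tfrac{\delta^2}{2}\Big)^2 = k\delta^2 - \tfrac{\delta^4}{4},
\]
which is the claimed identity.

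No step is a real obstacle. The only point needing care is the convention for the double sum: it runs over ordered pairs with multiplicity weights, which is what produces the factor $\frac12$. Diagonal terms contribute zero, so including or excluding them does not change the result.
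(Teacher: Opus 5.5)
Your proposal is correct and follows the paper's proof essentially step for step: you expand $|s_x-s_y|^2 = 2-2\langle s_x,s_y\rangle$, sum with multiplicity to get $2k^2-2|W|^2$, and substitute $\delta^2 = 2(k-|W|)$. The only difference is cosmetic: the paper factors $k^2-|W|^2=(k+|W|)(k-|W|)$, whereas you expand $k^2-(k-\tfrac{\delta^2}{2})^2$ directly.
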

\begin{proof}
    Fix $t \in \tilde U$. We choose to duplicate $x,y \in X_t$ according to their multiplicity. Then we have
    \[\sum_{x,y \in X_t} |s_x-s_y|^2 = \sum_{x,y \in X_t} (2 - 2\langle s_x,s_y\rangle)\]
    \[= 2k^2 - 2 \left\langle \sum_{x \in X_t} s_x, \sum_{y \in X_t}s_y\right\rangle = 2k^2 - 2|W|^2.\]
    Next, observe that
    \[k+|W| = 2k - (k-|W|) = 2k - \frac{\delta^2}{2}.\]
    Therefore
    \[k^2 - |W|^2 = (k+|W|)(k-|W|) = \left(2k - \frac{\delta^2}{2}\right)\frac{\delta^2}{2} = k \delta^2 - \frac{\delta^4}{4},\] which proves the formula.
\end{proof}

\section{Centering}
\label{sec:centering}

In this section, $X \subset \mathbb{R}^n$ is an ISGN which is a $k$-cover and $\theta$-flat, $\theta \in [0,\frac{\pi}{2})$, with $|X_\sing| < \infty.$ It will be necessary to center $X$, i.e. rotate it so that $S = E_1$. However it makes the argument simpler to perturb the centered $X$ slightly.

\begin{definition}
    We say that $X$ is \emph{centered} if $S = E_1$. For $\gamma \in (0,\frac{1}{4})$, we say that $X$ is $\gamma$-centered if $\angle( S,E_1) \leq \gamma \delta.$
\end{definition}

Then we have

\begin{lemma} \label{almost-centered-slope-bounds}
    Suppose $X$ is $\gamma$-centered. Let $s$ be a rightward pointing unit tangent to $X$ at any regular point. Then $|s-E_1| \leq (1+\gamma)\delta$. Equivalently, 
    \[\langle s,E_1\rangle \geq 1 - \frac{(1+\gamma)^2}{2}\delta^2.\]
    It follows that for any $j \geq 2$, $|\langle s,E_j\rangle| \leq (1+\gamma)\delta$. Since we have taken $\gamma < \frac{1}{4}$ we obtain the symbolically simpler inequalities
    \[\langle s,E_1\rangle \geq 1-\delta^2\]
    and
    \[|\langle s,E_j\rangle| \leq 2\delta\]
    for each $j \geq 2$.
\end{lemma}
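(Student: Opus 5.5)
The bound follows from the triangle inequality on the unit sphere, using the lemma that every rightward unit tangent $s$ at a regular point satisfies $|s-S|\le\delta$. First I compare $S$ with $E_1$. The chord length between unit vectors is at most the angle between them, as already used in the proof of Lemma \ref{central-tilt-equivalence}. Together with the $\gamma$-centering hypothesis this gives
\[|S-E_1|\le \angle(S,E_1)\le\gamma\delta.\]
Hence
\[|s-E_1|\le|s-S|+|S-E_1|\le\delta+\gamma\delta=(1+\gamma)\delta.\]

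For the equivalent form, $s$ and $E_1$ are unit vectors, so $|s-E_1|^2=2-2\langle s,E_1\rangle$. Squaring the bound and rearranging gives
\[\langle s,E_1\rangle\ge 1-\tfrac{(1+\gamma)^2}{2}\delta^2.\]
The argument reverses, so the two statements are equivalent.

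For the coordinate bound with $j\ge2$, note that $\langle E_1,E_j\rangle=0$, so
\[|\langle s,E_j\rangle|=|\langle s-E_1,E_j\rangle|\le|s-E_1|\le(1+\gamma)\delta.\]

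The simplified inequalities use only $\gamma<\tfrac14$. In that case $(1+\gamma)^2/2<\tfrac{25}{32}<1$, which gives $\langle s,E_1\rangle\ge1-\delta^2$. Also $1+\gamma<2$, which gives $|\langle s,E_j\rangle|\le 2\delta$.

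None of these steps is a real obstacle. The only point needing care is to use the chord--angle comparison $|a-b|\le\angle(a,b)$ to turn the angular centering hypothesis into a Euclidean distance bound.
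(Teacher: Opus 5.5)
Your proposal is correct and follows essentially the same route as the paper: the triangle inequality $|s-E_1|\le|s-S|+|S-E_1|\le\delta+\gamma\delta$, the identity $|s-E_1|^2=2-2\langle s,E_1\rangle$, and bounding each coordinate $|\langle s,E_j\rangle|$ by $|s-E_1|$. You also spell out the chord--angle step $|S-E_1|\le\angle(S,E_1)\le\gamma\delta$, which the paper leaves implicit.
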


\begin{proof}
    We have $|s-E_1| \leq |s-S|+|S-E_1| \leq \delta + \gamma \delta.$ Next, $\langle s,E_1\rangle =1 - \frac{1}{2}|s-E_1|^2 \geq 1 - \frac{(1+\gamma)^2}{2}\delta^2$. Finally, $|s_j| \leq \sqrt{(s_1-1)^2 + s_2^2 + \cdots + s_n^2} = |s-E_1| \leq (1+\gamma)\delta.$ The last two inequalities are numerology.
\end{proof}

\begin{definition}
A \emph{ghost-crossing} in $X$ is a triple $(x,x',j)$, where $x,x' \in X$, $x \neq x'$, and $j \in \{2,\ldots, n\}$, such that $x_1 = x_1'$ and $x_j = x_j'$. We consider the ghost-crossings $(x,x',j)$ and $(x',x,j)$ to be equivalent.

The \emph{direction} of the ghost-crossing is the index $j$. The \emph{time} of the ghost-crossing is $x_1=x_1'$. A ghost crossing is \emph{regular} if $x,x' \in X_\reg$, is \emph{half-singular} if one of $x,x'$ is singular and one is regular, and is \emph{singular} if $x,x' \in X_\sing$.

A ghost-crossing is \emph{degenerate} if there exist $s,s'$ rightward pointing unit tangents to $x,x'$ respectively (if the point is singular they can be any tangent just to the left or right of the singular point), such that
\[\frac{s_j}{s_1} = \frac{s'_j}{s'_1}.\]
\end{definition}

Observe that in the case $n=2$, there are no ghost-crossings.

\begin{definition}
    We say that $X$ is \emph{good} if it contains no degenerate (of any regularity-type), singular, nor half-singular ghost-crossings, and for each $t \in \mathbb{R}$, $X_t$ has at most one singular point.
\end{definition}

\begin{lemma} \label{bound-ghost}
    Suppose that $X$ is \emph{good}. Then $X$ has finitely many ghost-crossings (all of which are regular). In fact, the number of regular ghost-crossings is at most $(|X_{\sing}| + 1)(n-1){k \choose 2}$.
\end{lemma}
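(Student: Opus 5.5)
The plan is to cut the line at the singular times and count ghost-crossings interval by interval. Since $X$ is good, every ghost-crossing is regular. Since $|X_\sing|<\infty$, the set $\tilde V$ of singular times is finite, with $|\tilde V| \leq |X_\sing|$. Order $\tilde V$ as $v_1<\dots<v_m$, $m\le |X_\sing|$. This partitions $\mathbb{R}\setminus\tilde V$ into $m+1\le |X_\sing|+1$ open intervals $J_0,\dots,J_m$, and every regular ghost-crossing has its time in one of these intervals.

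Fix one interval $J$. Because $X$ is a $k$-cover with no singular points over $J$, the time-slab $X_J$ is a union of at most $k$ disjoint straight segments, each a graph over $J$ in the $E_1$ direction. Each segment carries a positive integer multiplicity and the multiplicities sum to $k$, so there are at most $k$ segments. Write them as $\sigma_a(t) = p_a + t\, s_a/\langle s_a,E_1\rangle$ for $a = 1,\dots,k' \le k$. A ghost-crossing $(x,x',j)$ with time in $J$ consists of two distinct points $x = \sigma_a(t)$ and $x'=\sigma_b(t)$. They lie in the same slice and are distinct, so $a\neq b$, and their $j$-th coordinates agree. For fixed $a\ne b$ and fixed $j\in\{2,\dots,n\}$, the difference $(\sigma_a(t)-\sigma_b(t))_j$ is an affine function of $t$ with slope $(s_a)_j/(s_a)_1 - (s_b)_j/(s_b)_1$. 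Any zero of this function inside $J$ is a ghost-crossing. Goodness excludes degenerate ghost-crossings, so the slope at such a zero is nonzero, and the affine function has at most one zero. Summing over pairs $\{a,b\}$ (the crossing is symmetric in $x,x'$) and directions $j$ gives at most $(n-1)\binom{k}{2}$ ghost-crossings with time in $J$.

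Multiplying by the number of intervals gives the bound $(|X_\sing|+1)(n-1)\binom{k}{2}$, and finiteness follows.

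The only subtle points are the following.
\begin{enumerate}
\item Ghost-crossings at singular times must be excluded. There, a crossing involving a regular point and the singular point is half-singular, which goodness forbids. Two regular points at a singular time are not a problem, since they still lie on segments extending to one side, and we can assign the time to an adjacent closed interval. To avoid double counting in that case, I would use the half-open intervals $[v_i,v_{i+1})$ rather than the open $J_i$. The per-pair affine argument still works because on such an interval the non-singular segments continue on both sides.
\item The number of distinct points per slice must be at most $k$, which is part of the $k$-cover definition.
\end{enumerate}
I expect the bookkeeping at singular times to be the main obstacle. Everything else is the observation that two non-parallel (in the $j$-th projected slope) lines meet at most once.
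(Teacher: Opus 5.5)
Your proposal is correct and is essentially the paper's proof. You split at the at most $|X_\sing|$ singular times, note that each slab consists of at most $k$ segments, and use the absence of degenerate ghost-crossings to show that, for each pair of segments and each direction $j$, the affine function $x_j-x'_j$ vanishes at most once; this gives $(n-1)\binom{k}{2}$ per slab. The only difference is the bookkeeping at singular times: the paper counts over the closures $\overline{I}$ of the components of $\tilde U$ and accepts double counting of regular ghost-crossings at singular times, which is harmless for an upper bound, while your half-open intervals avoid it.
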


\begin{proof}
    Since $X$ is good, it suffices to bound regular ghost-crossings.
    Let $I$ be a connected component of $\tilde U$. We will show that the number of ghost-crossings whose times lie in the closure $\overline I$ and are in direction $j$ is at most ${k \choose 2}$. Since there are $n-1$ choices for $j$ and $|X_\sing| + 1$ choices for $I$, that will complete the proof.

    Observe that $X_{\overline{I}}$ is a union of $m \leq k$ closed segments or rays (if $I$ is unbounded), which may intersect at $X_\sing$ at times in $\partial I$. It suffices to show each pair of distinct segments has at most one regular ghost crossing in direction $j$.

    Let $\gamma,\gamma'$ be two such distinct closed geodesics.  Parameterize them in time as $x(t)$ and $x'(t)$, where $x_1(t) = x'_1(t) = t$. Let $s$ and $s'$ be the rightward pointing unit tangent vectors along $\gamma$ and $\gamma'$ respectively. Observe that for all $t \in \overline{I}$, $\frac{s_j}{s_1} = \frac{\partial x_j}{\partial t}$ and $\frac{s_j'}{s_1'} = \frac{\partial x_j'}{\partial t}.$ 
    
    Firstly suppose that $\frac{s_j}{s_1} = \frac{s'_j}{s'_1}$. Then $x_j-x_j'$ is a constant $c$ along $t \in \overline{I}.$ If $c \neq 0$, then there is no ghost-crossing between $\gamma$ and $\gamma'$ in direction $j$. Instead if $c = 0$, then there is a degenerate ghost-crossing between $\gamma$ and $\gamma'$ in direction $j$ at every time $t \in \overline{I}$,
    which is contrary to the assumption that $X$ is good. 

Secondly suppose instead that $\frac{s_j}{s_1} \neq \frac{s'_j}{s'_1}$. Then $x_j(t) - x'_j(t) = a + bt$ for some $a \in \mathbb{R}$ and $b \neq 0$. Thus there is at most one $t \in \overline{I}$ with $x_j(t) = x'_j(t)$, so there is at most one ghost crossing between $\gamma$ and $\gamma'$ in direction $j$ over $\overline{I}$.
\end{proof}

We wish to slightly rotate $X$ such that it becomes good. In order to do this, we need to define what we mean by a slight rotation and guarantee that a slight rotation of $X$ will still be quantitatively close to the multiplicity $k$ $E_1$-axis.

\ 

Let $T(n) \cong \mathbb{R}^n$ denote the Lie group of translations of $\mathbb{R}^n$. Let $SE(n)$ denote the special Euclidean group, i.e. the Lie group of all orientation-preserving rigid motions of $\mathbb{R}^n$. Recall that $SE(n)$ can be written as a semidirect product $SE(n) \cong SO(n) \ltimes T(n)$. That is, we have a split short exact sequence

\[1 \longrightarrow T(n) \longrightarrow SE(n) \longrightarrow SO(n) \longrightarrow 1.\]

The inclusion $T(n) \to SE(n)$ is clear since each translation is a rigid motion. The quotient map $SE(n) \to SO(n)$ may be viewed as follows.

Given $x \in \mathbb{R}^n$, $SE(n)$ acts on $T_x \mathbb{R}^n$ as follows. Given $\phi \in SE(n)$ and $v \in T_x\mathbb{R}^n$, $\phi(v) \in T_x \mathbb{R}^n$ is $\phi_* v$ parallel transported back to $x$ (the parallel transport map does not depend on the path due to the flatness of $\mathbb{R}^n$). Because this is a composition of local isometries, this action yields a homomorphism $R_x: SE(n) \to SO(T_x \mathbb{R}^n).$ Since any translation yields the identity map on $T_x\mathbb{R}^n$, this induces an isomorphism $SO(n) \cong SE(n)/T(n) \to SO(T_x\mathbb{R}^n),$ and for any $x \in \mathbb{R}^n$, we have a split exact sequence

\[1 \longrightarrow T(n) \longrightarrow SE(n)  \overset{R_x}\longrightarrow SO(T_x\mathbb{R}^n) \longrightarrow 1.\]

Let $L_x: SO(T_x\mathbb{R}^n) \to SE(n)$ be the map which takes $\varphi \in SO(T_x\mathbb{R}^n)$ to the unique rigid motion of $\mathbb{R}^n$ which fixes $x$ and whose differential at $x$ is $\varphi$. Then $R_x \circ L_x = \mathrm{Id}_{SO(T_x\mathbb{R}^n)}$, and this yields a splitting of the above exact sequence. 

For any $x,y \in \mathbb{R}^n$, the maps $R_x\phi$ and $R_y\phi$, viewed as $(1,1)$-tensors at $x$ and $y$, are parallel transports of each other. Therefore if we identify all the tangent spaces of $\mathbb{R}^n$ with each other via parallel transport, all $R_x \phi$ over $x \in \mathbb{R}^n$ agree, and we just call them $\phi_*$ by abuse of notation.

\begin{definition}
Given $\alpha > 0$, we let 
\[SE_\alpha(n) = \{\phi \in SE(n): \angle(E_1,\phi_*E_1) < \alpha\}.\] Note that $SE_\alpha(n)$ is not a subgroup of $SE(n)$.
\end{definition}

Then we have 

\begin{lemma} \label{rotation-preserves}
    Suppose that $\alpha \leq \frac{\pi}{2} - \theta$ and $\phi \in SE_\alpha(n)$. Then $\phi(X)$ is an ISGN in $\mathbb{R}^n$ which is $(\theta + \alpha)$-flat. Further, $\phi(X)$ is a $k$-cover.
\end{lemma}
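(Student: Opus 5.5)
The argument splits into three checks: $\phi(X)$ is an ISGN, it is $(\theta+\alpha)$-flat, and it is a $k$-cover.

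\emph{ISGN.} A rigid motion $\phi$ is an isometry of $\mathbb{R}^n$, so it maps geodesic segments, rays and lines to geodesics of the same type and preserves local finiteness. We give each edge of $\phi(X)$ the multiplicity of its preimage, so integrality is immediate. At a singular point the balancing condition $\sum \Theta_i s_i = 0$ is carried by the linear map $\phi_*$ to $\sum \Theta_i \phi_* s_i = 0$, which is the balancing condition at $\phi(x)$.

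\emph{Flatness.} Let $v$ be a rightward unit tangent of $X$ at a regular point, so $\angle(v,E_1)\le\theta$. Then $\phi_*v$ is the tangent of $\phi(X)$ at the image point. Since $\phi_*$ preserves angles, the triangle inequality on $S^{n-1}$ gives
\[\angle(\phi_*v,E_1)\le \angle(\phi_*v,\phi_*E_1)+\angle(\phi_*E_1,E_1) < \theta+\alpha.\]
This applies to both orientations of each line, so every geodesic of $\phi(X)$ makes angle at most $\theta+\alpha<\pi/2$ with the $E_1$-axis, using $\alpha\le\frac{\pi}{2}-\theta$.

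\emph{$k$-cover.} This is the main step, and it uses the strict bound $\theta+\alpha<\pi/2$ to show no edge of $\phi(X)$ is perpendicular to $E_1$. I would first rephrase the $k$-cover condition for a $\theta'$-flat network with $\theta'<\pi/2$. Every edge of such a network is transverse to all hyperplanes $\pi^{-1}(t)$, so each edge is a graph over an interval of the $E_1$-axis. Define the slice multiplicity $m(t)=\sum_{x\in X_t}\Theta(x)$ at regular times. It is locally constant on regular times. At a singular time it stays constant exactly when the left and right multiplicities agree at each singular point, and at the singular time itself the sum of $\Theta(x)=\frac12(\text{left}+\text{right})$ then also equals that constant. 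So for a flat network, being a $k$-cover is equivalent to three conditions:
\begin{itemize}
\item[(a)] every singular point has equal left and right multiplicity;
\item[(b)] $m\equiv k$ on some regular time;
\item[(c)] $|X_t|\le k$, which follows from (b) since each point carries multiplicity at least $\frac12$ and we need the count bound.
\end{itemize}
For (c) I would argue that each point of $X_t$ either lies on an edge crossing $t$ or is a singular point with edges on both sides. Each such point therefore carries at least one unit of left multiplicity, and the left multiplicities sum to $k$.

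It remains to verify (a) and (b) for $\phi(X)$.

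For (a), the whole of $X$ (and of $\phi(X)$) is a union of $k$ ``strands'' with multiplicity going left to right. Concretely, the total multiplicity flux through a hyperplane counts each strand once. The key observation is the following. At a singular point $x$ of $X$, the edges split into left-going ones (with $\langle s,E_1\rangle<0$) and right-going ones. After applying $\phi$, which keeps every tangent within $\theta+\alpha<\pi/2$ of $\pm E_1$, an edge is left-going in $\phi(X)$ iff it was left-going in $X$. Indeed, the sign of $\langle \phi_*s,E_1\rangle$ cannot flip: $s$ within $\theta$ of $-E_1$ gives $\phi_*s$ within $\theta+\alpha$ of $-E_1$, hence not within $\theta+\alpha$ of $E_1$. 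So equality of left and right multiplicities at each singular point is preserved, which is (a).

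For (b), by (a) and the locally-constant argument, $m$ is a single constant on all regular times of $\phi(X)$. I would identify this constant by following the $k$ strands of $X$, counted with multiplicity. Equivalently, I would use the flux $m=\sum_{e}\Theta(e)\,\mathrm{sign}\langle s_e,E_1\rangle$ over edges crossing a hyperplane, which by the first variation with $Z=E_1$ is independent of the hyperplane, including tilted hyperplanes $\phi^{-1}(\pi^{-1}(t))$. For $X$ this flux through any hyperplane transverse to all edges equals $k$, so $\phi(X)$ has $m\equiv k$.
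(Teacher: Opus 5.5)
Your checks that $\phi(X)$ is an ISGN, the flatness estimate via the triangle inequality for angles, and your proof of (a) are the paper's argument; for (a) you show that $\phi_*$ cannot flip the sign of $\langle s,E_1\rangle$. You also check $|\phi(X)_t|\le k$, which the paper leaves implicit. Your reason there is the right one: each point carries at least one unit of left multiplicity. The phrase ``multiplicity at least $\frac12$'' would only give $2k$.

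One small slip, shared with the paper: $\alpha\le\frac{\pi}{2}-\theta$ only gives $\theta+\alpha\le\frac{\pi}{2}$. The strict bound $\angle(\phi_*v,E_1)<\frac{\pi}{2}$ comes from the strict inequality in the definition of $SE_\alpha(n)$, which your display does use.

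The routes differ when identifying the constant slice multiplicity $m$ of $\phi(X)$ with $k$. The paper notes that $m$ is the density of $\phi(X)$ at infinity, which is invariant under rigid motions and equals $k$ for $X$. Your strand argument also works once spelled out:
\begin{itemize}
\item split each edge into $\Theta(e)$ unit copies;
\item using (a) for $X$, pair incoming with outgoing copies at each singular point, giving $k$ piecewise-linear paths;
\item since $\phi_*$ preserves rightward directions, each image path is strictly monotone in $x_1$ from $-\infty$ to $+\infty$, so it meets every $\pi^{-1}(t)$ exactly once.
\end{itemize}
More directly still, $m$ is the total multiplicity of the ends of $\phi(X)$ running to $x_1\to-\infty$. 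These are exactly the images of the left ends of $X$, whose total multiplicity is $k$.

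However, your ``equivalent'' flux justification is wrong as stated. First variation with $Z=E_1$ only gives $\sum_e\Theta(e)\langle\eta_e,E_1\rangle=0$. That is conservation of the cosine-weighted momentum $\langle W,E_1\rangle$, not of signed multiplicity. No first-variation identity can give multiplicity conservation: a stationary $120^\circ$ triple junction with one edge on the left and two on the right satisfies all of them. Hyperplane-independence of the signed flux is really a boundary statement. Orienting every edge rightward makes $X$ an integral $1$-cycle, and this uses condition (a) for $X$, not stationarity.
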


\begin{proof}
    Clearly $\phi(X)$ is an ISGN in $\mathbb{R}^n$. We first claim it is $(\theta+\alpha)$-flat. Let $s$ be some rightward-pointing unit tangent to $X$ at a regular point. Then $\angle(s,E_1) \leq \theta$. Since $\phi$ preserves angles, $\angle(\phi_* s, \phi_* E_1) \leq \theta$. Thus \[\angle(\phi_* s, E_1) \leq \angle(\phi_* s, \phi_* E_1) + \angle(\phi_* E_1, E_1) \leq \theta + \alpha.\]

    Next, we will prove that $\phi(X)$ is a $k$-cover. We claim that if $s$ is a unit tangent to $X$ at a regular point which is rightward-pointing, i.e. $\langle s,E_1\rangle > 0$, then $\phi_* s$ is also rightward-pointing. To see this, by the same argument as above, we have that $\angle(\phi_*s,E_1) < \theta + \alpha < \frac{\pi}{2}$, so $\langle \phi_*s,E_1\rangle > 0$. Applying the same argument to $-s$ shows that if $\langle s,E_1\rangle < 0$ then $\langle \phi_* s, E_1\rangle < 0$.

    We claim now that for each singular point in $\phi(X)$, the total multiplicity coming in from the left is equal to the total multiplicity leaving from the right.
    Indeed, if $x \in X_\sing$ and $s_1,\ldots,s_m,s_1',\ldots,s'_{m'} \in T_x\mathbb{R}^n$ are the unit vectors pointing out of $x$ along the geodesics leaving it with $\langle s_i,E_1\rangle < 0$ and $\langle s'_{i'},E_1\rangle > 0$ for all $i,i'$, then $\langle \phi_*s_i,E_1\rangle < 0$ and $\langle \phi_*s'_{i'},E_1\rangle > 0$ for all $i,i'$. Since these are now all the unit tangents pointing out of $\phi(x)$ along the geodesics in $\phi(X)$ leaving it, and densities are preserved, this proves the claim.

    This shows that the function
    \[\sum_{x \in \phi(X)_t} \Theta(x)\]
    is constant in $t$. It must be equal to $k$; one way to see this is to observe that it is equal to the density of $\phi(X)$ at infinity, which is equal to the density of $X$ at infinity, which is $k$. Thus $\phi(X)$ is a $k$-cover.
\end{proof}

It will be convenient to use the following lemma, which has an elementary proof given in \cite{mityagin2015zerosetrealanalytic}.

\begin{lemma} \label{real-analytic-vanish}
    Suppose that $M$ is a connected real-analytic smooth manifold without boundary, and $f: M \to \mathbb{R}$ is a real-analytic function which is not identically zero. Then $\{x \in M: f(x) = 0\}$ has zero measure in $M$.
\end{lemma}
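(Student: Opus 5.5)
We must prove that the zero set of a nonzero real-analytic function on a connected real-analytic manifold $M$ has measure zero. We argue locally in charts and induct on dimension, using the identity theorem to pass from the local statement to the global one.

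\textbf{Step 1: measure zero is chart-local.} $M$ is second countable, so it is covered by countably many analytic charts. Hence it suffices to show that $Z=\{f=0\}$ has Lebesgue measure zero inside each chart domain. So let $f$ be real-analytic on a connected open set $U\subset\mathbb{R}^m$ (a ball, say).

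\textbf{Step 2: $f$ does not vanish identically on any chart.} Let $A$ be the set of points of $M$ at which every derivative of $f$ (taken in local coordinates) vanishes. This set does not depend on the choice of chart.

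\begin{itemize}
\item $A$ is closed, since each derivative is continuous.
\item $A$ is open, since $f$ equals its Taylor series near each point of $A$, so $f\equiv 0$ on a neighborhood.
\end{itemize}

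Since $M$ is connected and $f\not\equiv 0$, we get $A=\varnothing$. So in every chart ball $U$, $f|_U\not\equiv 0$ and $f|_U$ is not flat at any point.

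\textbf{Step 3: induction on $m$.} For $m=1$, the zeros of a nonzero analytic function on an interval are isolated. Hence they are countable, and so have measure zero.

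For $m>1$, write $x=(x',x_m)$ and apply Fubini to the indicator of $Z$. For each fixed $x'$, the slice $x_m\mapsto f(x',x_m)$ is analytic on an interval, since $U$ is a ball and slices are intervals. Either this slice is identically zero, or its zero set is null by the case $m=1$.

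Let $B$ be the set of $x'$ whose slice vanishes identically. Then
\[
B=\bigcap_j\{x' : \partial_{x_m}^j f(x',c)=0\}
\]
for a fixed value $c$ in the slice. Since $f\not\equiv 0$ on $U$, some $g_j=\partial_{x_m}^j f(\cdot,c)$ is a nonzero analytic function of $x'$ on a connected $(m-1)$-dimensional domain. If instead all $g_j$ vanished, the slice expansions would force $f\equiv0$ near the hyperplane $x_m=c$, and hence $f\equiv 0$ on $U$ by the identity theorem.

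By the induction hypothesis, that $g_j$ has a null zero set, so $B$ is null. Fubini then gives $|Z\cap U|=0$.

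\textbf{Main obstacle.} The delicate point is the slice geometry in Step 3. We need the slices to be intervals, which is why we work on balls, and we need the argument that some $g_j\not\equiv0$. For that argument it is cleanest to expand around a single point $p$ with $f$ not flat there, which Step 2 guarantees. Take a small cube $Q$ around $p$ with the chosen $c=p_m$. Then some $g_j$ is nonzero at $p'$, and we cover $U$ by countably many such cubes.
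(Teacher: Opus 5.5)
Your proof is correct and follows the paper's route: you reduce to coordinate balls and use the Euclidean statement. The differences are that you prove the Euclidean case yourself, by the standard Fubini induction on dimension, rather than citing it, and your Step 2 makes explicit the identity-theorem point, left implicit in the paper's one-line reduction, that connectedness of $M$ forces $f\not\equiv 0$ on every chart ball.

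Two small fixes. On a ball, take $c$ to be the last coordinate of the center, so that $c$ lies in every slice. In your closing paragraph, non-flatness at $p$ only gives that some $g_j\not\equiv 0$ on the projected cube, not that some $g_j(p')\neq 0$ (consider $f=x_1$ at the origin with $m=2$); that weaker fact is all the argument needs.
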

\begin{proof}
    In \cite{mityagin2015zerosetrealanalytic} this is proved for $M$ a connected open subset of $\mathbb{R}^m$. The lemma follows by considering real-analytic coordinate charts.
\end{proof}

\begin{lemma} \label{get-good}
    Fix $\alpha \in (0,\frac{\pi}{2}-\theta].$ Then there exists a $\phi \in SE_\alpha(n)$ such that $\phi(X)$ is good. This rigid motion can be taken to be arbitrarily close to the identity.
\end{lemma}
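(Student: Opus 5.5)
Since $SE_\alpha(n)$ is an open subset of $SE(n)$ containing the identity, and $SE(n)$ is a connected real-analytic manifold of dimension $N=n(n+1)/2$, I will show that the set of ``bad'' $\phi$ has Haar (equivalently Lebesgue) measure zero. Any open neighbourhood of the identity inside $SE_\alpha(n)$ then contains a good $\phi$, which gives both conclusions. By Lemma \ref{rotation-preserves}, every $\phi\in SE_\alpha(n)$ makes $\phi(X)$ a $(\theta+\alpha)$-flat $k$-cover. Since $\theta+\alpha\le\frac{\pi}{2}$, strict inequality being available after shrinking to a neighbourhood of the identity, no tangent of $\phi(X)$ is perpendicular to $E_1$. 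Hence each edge can be parametrized by time and ``rightward tangent'' makes sense.

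The bad set is a finite union of conditions, one for each finite piece of combinatorial data. A datum consists of a pair of edges/singular points of $X$ and a direction $j$ for the ghost-crossing conditions, or a pair of singular points for the condition ``at most one singular point per time-slice''. The set of data is finite, because $|X_\sing|<\infty$ and $X$ then has finitely many edges. I handle each datum as follows.

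\emph{Two singular points $p\ne q$ in the same time-slice.} This is the condition $\langle \phi(p)-\phi(q),E_1\rangle=0$, i.e.\ $\langle \phi_*(p-q),E_1\rangle=0$. This is real-analytic in $\phi$ and not identically zero, since some rotation moves $p-q$ off $E_1^\perp$. So Lemma \ref{real-analytic-vanish} gives measure zero.

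\emph{Singular ghost-crossing in direction $j$.} This requires both coordinates $1$ and $j$ of $\phi_*(p-q)$ to vanish. It is contained in the previous null set.

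\emph{Half-singular ghost-crossing between $p\in X_\sing$ and an edge through $y_0$ with direction $v$.} Points on the edge are $y_0+\lambda v$, and $\lambda$ is determined by the time condition $\langle\phi_*(y_0+\lambda v-p),E_1\rangle=0$ (plus translation terms, which cancel in differences). Solving, $\lambda=-\langle\phi_*(y_0-p),E_1\rangle/\langle\phi_*v,E_1\rangle$, where the denominator is nonzero. Multiplying the $j$-coordinate condition by $\langle\phi_*v,E_1\rangle$ yields the polynomial condition
\[\langle\phi_*(y_0-p),E_j\rangle\langle\phi_*v,E_1\rangle-\langle\phi_*(y_0-p),E_1\rangle\langle\phi_*v,E_j\rangle=0.\]
This is the $(1,j)$ component of $\phi_*(y_0-p)\wedge\phi_*v$. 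It is not identically zero when $y_0-p$ and $v$ are independent, since a rotation can place that 2-plane onto the $E_1E_j$ plane. If they are dependent, $p$ lies on the line of the edge. I have to treat this case carefully. The ghost-crossing would then be with a point of the edge's extension, which is only relevant if the edge segment contains the crossing time. When $p$ is an endpoint of the edge, the points coincide, so $x=x'$ and it is not a ghost-crossing. For other dependent configurations I note that in a $k$-cover the line of an edge meets $X_\sing$ only at its endpoints within the edge. This is the step I expect to need the most care: restricting to the finite edge and excluding the trivial collinear cases.

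\emph{Regular ghost-crossings and degeneracy.} Degeneracy for tangents $s,s'$ (from edges, or one-sided tangents at singular points) is $\langle\phi_*s,E_j\rangle\langle\phi_*s',E_1\rangle-\langle\phi_*s,E_1\rangle\langle\phi_*s',E_j\rangle=0$, the $(1,j)$ component of $\phi_*(s\wedge s')$. This is nonzero for generic $\phi$ when $s\ne\pm s'$. If $s=s'$, two distinct edges are parallel. They can only form a degenerate ghost-crossing if their lines are not distinct in the $(1,j)$ projection, i.e.\ the component of $\phi_*((y_0-y_0')\wedge s)$ vanishes. By the half-singular computation this is again a nontrivial analytic condition unless the edges are collinear, which cannot produce distinct points at equal time. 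Note that a degenerate ghost-crossing needs both degeneracy and an actual ghost-crossing. So it suffices, in each case, that one of the two analytic conditions (tangent degeneracy for nonparallel edges, coincidence of projected lines for parallel ones) is generically nonzero. The same wedge computation covers singular/half-singular degenerate crossings, which are already excluded.

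Taking the finite union of these null sets, the complement is dense in $SE(n)$. Hence it meets every neighbourhood of the identity inside $SE_\alpha(n)$, which produces the required $\phi$.
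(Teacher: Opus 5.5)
Your proposal is correct. For two of the three families of conditions it is essentially the paper's argument:
\begin{itemize}
\item two singular points in one time-slice, which also rules out singular ghost-crossings;
\item degeneracy, split into non-parallel tangents versus parallel distinct edges, using that collinear distinct edges never share a time.
\end{itemize}
One small difference there: you use the polynomial Pl\"ucker coordinates $\langle a,E_j\rangle\langle b,E_1\rangle-\langle a,E_1\rangle\langle b,E_j\rangle$ rather than differences of slope ratios. Your functions are therefore analytic on all of the connected group $SE(n)$, so you never need to worry about the domain $SE_\alpha(n)$ on which the ratios are defined.

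The genuine difference is the half-singular step.
\begin{itemize}
\item The paper handles it with a second perturbation. It first makes each slice contain at most one singular point. It then perturbs again inside the subgroup $SE^1(n)$ of motions preserving every time-slice. The regular points sharing a slice with a singular point $x$ then form a fixed finite set $\{x_1,\dots,x_m\}$, and keeping $\langle \phi(x_i)-\phi(x),E_j\rangle$ away from zero is visibly a nontrivial analytic condition on $SE^1(n)$. Openness of the first-stage conditions ensures that a small enough second perturbation does not undo them.
\item You instead eliminate the edge parameter $\lambda$. This turns the half-singular crossing into vanishing of a single determinant in the rotation part, so everything is one measure-zero argument in $SE(n)$, with no subgroup and no stability argument.
\end{itemize}
The price of your route is the collinear case $y_0-p\parallel v$, where your determinant vanishes identically and which the paper's approach never encounters. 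Your handling of that case is correct but more roundabout than needed. If $p$ lies on the line $L$ of the edge, then for every admissible $\phi$ the unique point of $\phi(L)$ in the slice of $\phi(p)$ is $\phi(p)$ itself. So no half-singular ghost-crossing between $\phi(p)$ and that edge can ever occur, whether or not $p$ lies on the edge segment.
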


\begin{proof}
First, we want to get rid of distinct singular points of $X$ occurring at the same time, which also rules out singular ghost-crossings. Note that since $|X_\sing| < \infty$, there are finitely many pairs $x,x' \in X_\sing$, $x \neq x'$. Given such a pair $x \neq x'$, note that the map $SE(n) \to \mathbb{R}$, $\phi \mapsto \langle \phi(x)-\phi(x'),E_1\rangle$ is real-analytic and not identically zero, so by Lemma \ref{real-analytic-vanish}, \[\{\phi \in SE(n): \phi(x)_1 = \phi(x')_1\}\] has Haar measure zero. Taking the union of all of these sets over all pairs of singular points $x \neq x'$, the union is still measure zero.

Next, we aim to get rid of degenerate ghost-crossings. Since $|X_\sing| < \infty$, the set
\[\mathcal{S} = \{s \in S^{n-1}: s \text{ is tangent to } X_\reg \text{ somewhere, and } \langle s,E_1\rangle > 0 \}\]
is finite. For any pair $s \neq s'$ in $\mathcal{S}$ and $j \geq 2$, the map $SE_\alpha(n) \to \mathbb{R}$, $\phi \mapsto \frac{\phi_* s_j}{\phi_* s_1} - \frac{\phi_* s_j'}{\phi_* s_1'}$ (which is well-defined since $\phi \in SE_\alpha(n)$) is real-analytic and not identically zero, so again by Lemma \ref{real-analytic-vanish} the set
\[\left\{\phi \in SE_\alpha(n): \frac{\phi_* s_j}{\phi_* s_1} = \frac{\phi_* s_j'}{\phi_* s_1'}\right\}\]
has measure zero. The union over these sets for $s \neq s'$ in $\mathcal{S}$ still has measure zero in $SE_\alpha(n)$.

However, there may be distinct connected components of $X_\reg$ which are parallel to each other. Suppose there are two such segments $I,J$ parallel to one another, and extend them both to affine lines in $\mathbb{R}^n$. If we parameterize each segment in time as $x(t)$ and $x'(t)$ with $x_1(t) = x'_1(t) = t$, then $x(t) - x'(t)$ is a constant independent of $t$. This constant; call it $c(I,J)$, has $c_1(I,J) = 0$. If $c(I,J) \neq 0$, then for each $j \geq 2$, the map $SE_\alpha(n) \to \mathbb{R}$, $\phi \mapsto c_j(\phi(I),\phi(J))$, is real-analytic and not identically zero, except in the case where $I$ and $J$ extend to the same affine line. Thus again by Lemma \ref{real-analytic-vanish}
\[\{\phi \in SE_\alpha(n): c_j(\phi(I),\phi(J)) = 0\}\] has measure zero in $SE_\alpha(n)$.
The union over these sets for $I \neq J$ parallel has measure zero in $SE_\alpha(n)$. In the case that $c(I,J) = 0$, we have $c(\phi(I),\phi(J)) = 0$ for any $\phi$. However, since $\phi(I)$ and $\phi(J)$ are disjoint but lie on the same affine line, they can never share an $x_1$-coordinate, so there can be no ghost-crossing between them. 

Since the union over all the sets above is still measure zero in $SE_\alpha(n)$, we can apply an arbitrarily small rigid motion $\varphi \in SE_\alpha(n)$ to $X$ which avoids all of these sets. Then, each time slice $\varphi(X)_t$ contains at most one singular point (and so has no singular ghost-crossings), and $\varphi(X)$ contains no degenerate ghost-crossings. 

Finally, we get rid of half-singular ghost-crossings. If $n=2$, there are no ghost-crossings, so there is nothing to do. Thus assume $n \geq 3$. Let $SE^1(n)$ be the set of rigid motions $\phi \in SE(n)$ which restrict to rigid motions of each time slice $\pi^{-1}(t)$. These also fix $E_1$. Observe that $SE^1(n)$ is a Lie subgroup of $SE(n)$. For any $x \in \varphi(X)_\sing$, let $t = \pi(x)$ and consider the time-slice $\varphi(X)_t$. Let $\varphi(X)_t \setminus \{x\} = \{x_1,\ldots,x_m\}$, so $x_1,\ldots,x_m \in \varphi(X)_\reg.$ For each $1 \leq i \leq m$ and $j \geq 2$, the map $SE^1(n) \to \mathbb{R}$, $\phi \mapsto \langle \phi(x_i)-\phi(x),E_j\rangle$ is real-analytic and not identically zero. Thus again by Lemma \ref{real-analytic-vanish}, $\{\phi \in SE^1(n): \langle \phi(x_i) - \phi(x),E_j\rangle = 0\}$  has measure zero in $SE^1(n)$. The union of these sets over every $x \in \varphi(X)_\sing$, $i$ and $j$ still has measure zero in $SE^1(n)$. 

Thus we can apply an arbitrarily small rigid motion $\varphi' \in SE^1(n)$ to $\varphi(X)$ such that $\varphi'(\varphi(X))$ has no half-singular ghost-crossings. Since the previous bad sets we avoided were closed, by taking $\varphi'$ small enough, we do not re-enter them. The resulting ISGN $\varphi'(\varphi(X))$ has one singular point per timeslice, no degenerate ghost-crossings, and no half-singular ghost-crossings, so is good. Since we took $\varphi,\varphi'$ arbitrarily small, we can ensure that $\varphi' \circ \varphi \in SE_\alpha(n)$, and take $\phi = \varphi' \circ \varphi.$
\end{proof}

The following sums up everything we have done in this section.

\begin{proposition} \label{rotateX}
Suppose $\delta > 0$ and $\theta < \frac{\pi}{4}$, and fix $\gamma \in (0,\frac{1}{4}).$ Then there exists a rigid motion $\phi \in SE(n)$ such that $\phi(X)$ is a $k$-cover, $3\theta$-flat, $\gamma$-centered, and good.
\end{proposition}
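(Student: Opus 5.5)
The plan is to build $\phi$ as a composition of two rigid motions. The first is a rotation $\psi$ that centers $X$ exactly. The second is a small perturbation $\phi'$, supplied by Lemma \ref{get-good}, that makes the network good while keeping it $\gamma$-centered.

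\textbf{Step 1: exact centering.} Let $\psi \in SO(n) \subset SE(n)$ be a rotation with $\psi_* S = E_1$. Since $\angle(S,E_1) \leq \theta$ by the lemma on the central direction, we may take $\psi$ to be the rotation in the plane spanned by $S$ and $E_1$; then $\angle(\psi_*E_1,E_1) = \angle(E_1,S) \leq \theta$. Because $\theta < \frac{\pi}{4}$, we have $\theta < \frac{\pi}{2} - \theta$, so $\psi \in SE_\alpha(n)$ for some $\alpha$ with $\theta < \alpha \leq \frac{\pi}{2}-\theta$ (the degenerate case $S = E_1$ is trivial). Lemma \ref{rotation-preserves} then gives that $\psi(X)$ is a $k$-cover and $2\theta$-flat.

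Next we check the invariants of $\psi(X)$. The rightward tangents of $\psi(X)$ are $\psi_* s_x$, because rightward-pointing directions are preserved, as shown in the proof of Lemma \ref{rotation-preserves}. Hence $W(\psi(X)) = \psi_* W$, so the central direction becomes $E_1$ and the central tilt $\delta = \sqrt{2(k-|W|)}$ is unchanged. Thus $\psi(X)$ is centered with the same $\delta > 0$.

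\textbf{Step 2: perturbation to goodness.} Apply Lemma \ref{get-good} to $\psi(X)$, which is $2\theta$-flat with $2\theta < \frac{\pi}{2}$. This gives $\phi' \in SE_{\alpha'}(n)$ with $\phi'(\psi(X))$ good, where $\phi'$ may be taken arbitrarily close to the identity. Choose $\alpha' = \min(\theta, \gamma\delta, \frac{\pi}{2}-2\theta)$; this is positive since $\delta > 0$ and $2\theta < \frac{\pi}{2}$. By Lemma \ref{rotation-preserves}, $\phi'(\psi(X))$ is then a $k$-cover and $(2\theta+\alpha') \leq 3\theta$-flat.

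As in Step 1, the invariant $W$ transforms by $\phi'_*$. So $\delta$ is unchanged, and the new central direction is $\phi'_*E_1$, which satisfies $\angle(\phi'_*E_1, E_1) < \alpha' \leq \gamma\delta$. This is exactly $\gamma$-centeredness. Setting $\phi = \phi' \circ \psi$ completes the construction.

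\textbf{Main obstacle.} The delicate point is justifying that $W$ is equivariant under the rigid motions used, i.e.\ $W(\phi(X)) = \phi_* W(X)$. This requires that the rightward unit tangents of $\phi(X)$ are exactly the images of those of $X$, and that time-slices of $\phi(X)$ carry total multiplicity $k$. The rightward-pointing preservation argument in the proof of Lemma \ref{rotation-preserves} handles the first point, and the $k$-cover conclusion of that lemma handles the second. Once equivariance holds, $|W|$, and hence $\delta$, is invariant, and the angle bounds follow directly.
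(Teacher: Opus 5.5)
Your proposal is correct and is essentially the paper's proof: center exactly by a rotation in $SE_\alpha(n)$, then perturb using Lemma \ref{get-good}, with Lemma \ref{rotation-preserves} supplying the $k$-cover property and $3\theta$-flatness. Where the paper appeals to continuity of $\delta$ and $S$ under rigid motions near the identity, you use exact equivariance of $W$ together with the explicit choice $\alpha' \le \gamma\delta$, which is a slightly sharper form of the same step. The one point to tighten is the justification of $W(\phi(X)) = \phi_* W(X)$: time-slices of $\psi(X)$ are images of tilted hyperplane sections of $X$, not of time-slices of $X$, so rightward-preservation and the $k$-cover property alone do not give it. The clean fix is to evaluate both sides at very large times, where every slice meets each of the finitely many right-going rays exactly once.
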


\begin{proof}
 First apply a rigid motion $\phi$ to $X$ such that the central direction of $\phi(X)$ is $E_1$. This rigid motion can be taken to lie in $SE_\alpha(n)$ with $\alpha = \theta$. Since $\theta < \frac{\pi}{4}$, we have $\alpha \leq \frac{\pi}{2} - \theta$, so by Lemma \ref{rotation-preserves}, $\phi(X)$ is $2\theta$-flat and a $k$-cover. 
 Next, by Lemma \ref{get-good}, we may apply another rigid motion $\phi'$ to $\phi(X)$ such that $\phi'(\phi(X))$ is $\gamma$-centered and good (here we use that $\delta(\phi'(\phi(X)))$ is continuous in $\phi' \in SE(n)$ near the identity, and take $\phi'$ as close as we need to the identity). Assuming $\phi'$ is small enough, $\phi'(\phi(X))$ is $3\theta$-flat and a $k$-cover. 
\end{proof}

\section{Collision Potentials}
\label{sec:colllisionpotentials}

In this section, $X \subset \mathbb{R}^n$ is an ISGN which is a $k$-cover, $\theta$-flat, $\theta \in [0,\frac{\pi}{2})$, $\gamma$-centered, $\gamma \in [0,\frac{1}{4})$, good, and with $|X_\sing| < \infty.$

\begin{definition}
Let $\tilde V_\mathrm{ghost} \subset \mathbb{R}$ be the collection of times $t$ at which a ghost-crossing occurs (in any direction). By Lemma \ref{bound-ghost}, $\tilde V_{\mathrm{ghost}}$ is finite.
Then let $V = \tilde V \cup \tilde V_{\mathrm{ghost}}$, and let $U = \mathbb{R} \setminus V.$ Observe that $V \supset \tilde V$, $U \subset \tilde U$, and $V$ is finite.
\end{definition}

\begin{definition}
    For $j \geq 2$ and $t \in U$, given $x \neq x' \in X_t$,
    we define the $j$\emph{th collision pair potential} of $x$ and $x'$ to be 
    \[p_j(x,x') = \Theta(x)\Theta(x') \, (\langle s_{x'},E_j\rangle-\langle s_x,E_j\rangle) \sign (x_j - x'_j), \]
     where 
     \[\sign(a) = \begin{cases} 1: a > 0 \\ 
     0 : a = 0 \\ 
     -1 : a < 0
     \end{cases}\]
     is the sign function.
\end{definition}

Notice that this is invariant if we swap $x$ and $x'$.  This pair potential models how much the geodesics are pointing towards each other, along the $E_j$-axis. The idea is to have a pairwise interaction between geodesics at $x'$ and $x$ roughly consisting of the difference in their slopes in direction $j$, multiplied by the sign of the differences in their positions, so that we get a positive contribution from the pair $x,x'$ if the geodesics are pointing towards each other in direction $j$, and a negative one if they are pointing away. 

However, the contribution is not exactly the difference in slopes (the slope at $x$ in direction $j$ would be $\frac{\langle s_x, E_j\rangle}{\langle s_x,E_1\rangle}$). The current form is necessary to ensure that changes in the potential are local, i.e. coming from individual singular points and ghost crossings, but it comes at the cost that the sign of the pairwise potential is not always as expected. However, we can show that the degree to which it is the wrong sign is higher order in $\delta$, so is negligible as $\delta \to 0$.

\begin{lemma} \label{pair-potential-sign}
Assume that $\delta \leq \frac{1}{2}$.
    Suppose that $t \in U$, $x \neq x' \in X_t$, and $x'_j < x_j$. Let $s = s_x$ and $s' = s_{x'}$. If $\frac{s_j'}{s_1'} \geq \frac{s_j}{s_1}$, then 
    $p_j(x,x') \geq -8\Theta(x)\Theta(x')\delta^3$. Else if $\frac{s_j'}{s_1'} \leq \frac{s_j}{s_1}$, then $p_j(x,x') \leq 8\Theta(x)\Theta(x')\delta^3$.
    
\end{lemma}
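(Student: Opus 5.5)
Since $x'_j < x_j$, we have $\sign(x_j - x'_j) = 1$, so $p_j(x,x') = \Theta(x)\Theta(x')(s'_j - s_j)$. It therefore suffices to show that in the first case $s'_j - s_j \geq -8\delta^3$, and in the second case $s'_j - s_j \leq 8\delta^3$. The two cases are symmetric: swap the roles of $s$ and $s'$ and reverse the inequalities. So I will carry out only the first case, $\frac{s'_j}{s'_1} \geq \frac{s_j}{s_1}$.

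The plan is to compare the difference of the $E_j$-components with the difference of the slopes. Write
\[s'_j - s_j = s'_1\Bigl(\frac{s'_j}{s'_1} - \frac{s_j}{s_1}\Bigr) + \frac{s_j}{s_1}\,(s'_1 - s_1).\]
Since $s'_1 > 0$, the first term is nonnegative by hypothesis. Hence
\[s'_j - s_j \geq -\frac{|s_j|}{s_1}\,|s'_1 - s_1|,\]
and all that remains is to bound this error term by $8\delta^3$.

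For the error term I use Lemma \ref{almost-centered-slope-bounds}, which applies because $X$ is $\gamma$-centered. It gives $|s_j| \leq 2\delta$. It also gives $1 - \delta^2 \leq s_1 \leq 1$, and likewise for $s'_1$, so $|s'_1 - s_1| \leq \delta^2$. Since $\delta \leq \frac12$, we get $s_1 \geq 1 - \delta^2 \geq \frac34$. Combining these,
\[\frac{|s_j|}{s_1}\,|s'_1 - s_1| \leq \frac{2\delta\cdot\delta^2}{3/4} = \frac{8}{3}\delta^3 \leq 8\delta^3.\]
This proves the first case. The second case follows in the same way with the inequalities reversed.

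The only real point is the decomposition above: it isolates the sign-definite term coming from the slope comparison, and what is left over is automatically of third order. I expect no obstacle beyond making sure the bound $|s'_1 - s_1| \leq \delta^2$ uses only the one-sided estimate $s_1, s'_1 \in [1-\delta^2, 1]$. The hypothesis $t \in U$ serves only to guarantee that $x$ and $x'$ are regular, so that $s_x$ and $s_{x'}$ are defined, and that $x_j \neq x'_j$.
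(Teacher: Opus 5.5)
Your proof is correct and matches the paper's argument: the same decomposition $s'_j - s_j = s'_1\bigl(\frac{s'_j}{s'_1} - \frac{s_j}{s_1}\bigr) + \frac{s_j}{s_1}(s'_1 - s_1)$, with the error term bounded via Lemma \ref{almost-centered-slope-bounds} and $\delta \leq \frac{1}{2}$. Your bookkeeping gives the slightly sharper constant $\frac{8}{3}\delta^3$ where the paper gets $4\delta^3$; both lie within the stated $8\delta^3$.
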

\begin{proof}
    
        We have
        \[s'_j - s_j = s'_1\left(\frac{s'_j}{s'_1} - \frac{s_j}{s_1}\right) + (s_1'-s_1) \frac{s_j}{s_1}.\]
         We claim that the absolute value of the second term is at most $8\delta^3$. Indeed by Lemma \ref{almost-centered-slope-bounds}, $1-\delta^2 \leq s_1,s_1' \leq 1$, so $|s_1'-s_1| \leq \delta^2$, and
        \[\left|\frac{s_j}{s_1}\right| \leq \frac{2\delta}{1-\delta^2} \leq 4\delta.\] Thus our claim holds. 

        Since $x'_j < x_j$, we have $\sign(x_j-x'_j)=1.$
        If $\frac{s_j'}{s_1'} \geq \frac{s_j}{s_1}$, then
        \[p_j(x,x') = \Theta(x) \Theta(x') \, (s'_j-s_j) \geq -8 \Theta(x)\Theta(x')\delta^3. \]
        Instead if $\frac{s_j'}{s_1'} \leq \frac{s_j}{s_1}$, then
         \[p_j(x,x') = \Theta(x) \Theta(x') \, (s'_j-s_j) \leq 8 \Theta(x) \Theta(x') \delta^3. \]
\end{proof}

\begin{definition}
    For $j \geq 2$ and $t \in U$, we define the $j$\emph{th collision potential} $\Phi_j(t)$ as follows:
    \[
    \Phi_j(t) = \sum_{\{x,x'\} \subset X_t} p_j(x,x').
    \]
    This is understood as summing over unordered pairs $\{x,x'\}$,
    so each unordered pair has its potential counted once. Further, the \emph{total collision potential} is
    \[\Phi(t) = \sum_{j=2}^n \Phi_j(t).\]
   
\end{definition}

    Then we have the following global bound.

    \begin{lemma} \label{potential-global-bound}
    For all $t \in U$ and $j \geq 2$, we have 
    \[|\Phi_j(t)| \leq 2 k^2 \delta.\]
    Further,
    \[|\Phi(t)| \leq 2 nk^2\delta.\]
    \end{lemma}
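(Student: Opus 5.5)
The bound follows from a crude term-by-term estimate, using the centered slope bounds of Lemma \ref{almost-centered-slope-bounds}. Fix $t \in U$ and $j \geq 2$. For an unordered pair $\{x,x'\} \subset X_t$, the sign factor has absolute value at most $1$, so
\[|p_j(x,x')| \leq \Theta(x)\Theta(x')\,\bigl|\langle s_{x'},E_j\rangle - \langle s_x,E_j\rangle\bigr|.\]
Since $t \in U \subset \tilde U$, both $x$ and $x'$ are regular, so $s_x$ and $s_{x'}$ are rightward-pointing unit tangents at regular points. Because $X$ is $\gamma$-centered, Lemma \ref{almost-centered-slope-bounds} gives $|\langle s,E_j\rangle| \leq 2\delta$ for each of them. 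The difference is therefore at most $4\delta$, and $|p_j(x,x')| \leq 4\delta\,\Theta(x)\Theta(x')$.

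Summing over unordered pairs gives
\[|\Phi_j(t)| \leq 4\delta \sum_{\{x,x'\}\subset X_t} \Theta(x)\Theta(x') \leq 4\delta\cdot\frac12\Bigl(\sum_{x\in X_t}\Theta(x)\Bigr)^2 = 2k^2\delta.\]
Here the sum of $\Theta(x)\Theta(x')$ over unordered pairs of distinct points is at most half the square of the total multiplicity, and the total multiplicity equals $k$ because $X$ is a $k$-cover. This is exactly the constant claimed.

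For the second statement, apply the triangle inequality to $\Phi = \sum_{j=2}^n \Phi_j$. This gives $|\Phi(t)| \leq 2(n-1)k^2\delta \leq 2nk^2\delta$.

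Nothing here is a real obstacle. The points to check are that points of $X_t$ for $t\in U$ are regular, so $s_x$ is defined, and that Lemma \ref{almost-centered-slope-bounds} applies, which needs only $\gamma<\tfrac14$.
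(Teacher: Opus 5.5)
Your proposal is correct and follows the paper's proof: the slope bound $|\langle s_x,E_j\rangle| \leq 2\delta$ from Lemma \ref{almost-centered-slope-bounds} gives $|p_j(x,x')| \leq 4\Theta(x)\Theta(x')\delta$, you sum over unordered pairs, and then you sum over $j$. The only difference is that you bound the pair sum by $\frac{1}{2}k^2$ where the paper uses ${k \choose 2}$, and both bounds are valid.
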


    \begin{proof}
        From Lemma \ref{almost-centered-slope-bounds}, for all $x \in X_t$ and $j \geq 2$, $|\langle s_x,E_j\rangle| \leq 2\delta$. Therefore, for any $x \neq x' \in X_t$, $|p_j(x,x')| \leq 4\Theta(x)\Theta(x')\delta.$ Thus,
        \[|\Phi_j(t)| \leq  {k \choose 2} 4\delta \leq 2k^2\delta.\]
        The second bound follows by summing the first over $j \geq 2$.
    \end{proof}

    \begin{proposition} \label{potential-local} Fix $j \geq 2$. Then
        $\Phi_j$ is constant on each connected component of $U$. Further, at $t \in V$, consider the jump $\Delta_j(t) = \Phi_j(t_1) - \Phi_j(t_0)$ where $t_0,t_1 \in U$, $t_0 < t < t_1$ and $[t_0,t_1] \cap V = \{t\}$. Since $X$ is good, there is at most one singular point in $X_t$.
        Let $\mathcal{G}_j(t)$ be the set of ghost-crossings in direction $j$ at time $t$. Then the jump $\Delta_j(t)$ can be decomposed as a sum
        \[\Delta_j(t) = \Delta_j^{\sing}(t) + \sum_{(\{x,x'\},j) \in \mathcal{G}_j(t)} \Delta_j^{\mathrm{ghost}}(x,x',t),\]
        where $\Delta_j^\sing(t)$ is the difference of pair-potentials only from points on geodesics which hit the singular point at $t$ (if it exists), and $\Delta_j^{\mathrm{ghost}}(x,x',t)$ is change in pair potential $p_j(x,x')$ across the time $t$ (where we move the points $x,x'$ slightly forward and backward in time along the geodesics). Note that we only count each equivalence class of ghost-crossing once.
    \end{proposition}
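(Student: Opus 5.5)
Here is the plan. On a connected component $J$ of $U$, each point of $X_t$ moves along a fixed regular geodesic segment as $t$ varies. I will use this to show that every term of $\Phi_j$ is constant on $J$.

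\textbf{Constancy on components of $U$.} Since $J \subset \tilde U$, the time-slab $X_J$ contains no singular points. So by the $k$-cover property it is a disjoint union of finitely many open straight segments, each transverse to the slices and parametrized by time as $x(t)$ with $x_1(t)=t$. Along each segment the multiplicity $\Theta(x(t))$ and the unit tangent $s_{x(t)}$ are constant. The pairing of points in $X_t$ is also fixed as $t$ ranges over $J$. For a pair of segments $x(t),x'(t)$, the difference $x_j(t)-x'_j(t)$ is affine in $t$. It vanishes exactly at ghost-crossing times in direction $j$, or identically (a degenerate ghost-crossing). Goodness rules out the identically vanishing case, and $J$ avoids $\tilde V_{\mathrm{ghost}}$, so the difference never vanishes on $J$. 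Hence $\sign(x_j-x'_j)$ is constant on $J$, and each $p_j(x(t),x'(t))$ is constant. Summing over unordered pairs shows $\Phi_j$ is constant on $J$.

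\textbf{Decomposing the jump at $t\in V$.} Choose $t_0<t<t_1$ as in the statement. Partition the geodesic strands over $[t_0,t_1]$ into two classes:
\begin{itemize}
\item[(a)] strands that hit the singular point $y\in X_t$, if one exists, coming from the left or leaving to the right;
\item[(b)] all other strands, which pass through time $t$ as regular segments and extend continuously from $t_0$ to $t_1$.
\end{itemize}
Write $\Phi_j(t_1)-\Phi_j(t_0)$ as a sum over three kinds of pairs: pairs with at least one member in (a), pairs of class-(b) strands that have a ghost-crossing in direction $j$ at time $t$, and the remaining pairs of class-(b) strands.

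The first group, as a difference of sums of $t_1$-pairs and $t_0$-pairs, is by definition $\Delta_j^{\sing}(t)$. For the third group, each pair consists of strands that are regular on $[t_0,t_1]$ with $x_j\neq x'_j$ throughout. On $[t_0,t_1]\setminus\{t\}$ this holds because the interval avoids $V$. At $t$ it holds because no ghost-crossing in direction $j$ occurs there. So the argument of the first step applies and the pair's contribution is unchanged. The second group contributes exactly $\sum\Delta_j^{\mathrm{ghost}}(x,x',t)$.

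Goodness makes these groups disjoint and exhaustive. There are no half-singular ghost-crossings, so every ghost-crossing at $t$ involves two class-(b) strands and is not already counted in $\Delta_j^{\sing}$. There is at most one singular point per slice. A single pair can ghost-cross in direction $j$ at most once near $t$, and we count each equivalence class once.

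\textbf{Expected main obstacle.} The main subtlety is the bookkeeping for pairs involving the singular point. On the two sides of $t$ the sets of strands hitting $y$ differ, with different counts and multiplicities, so a pair $(x,x')$ on the left has no canonical counterpart on the right. This is why $\Delta_j^{\sing}$ must be defined as the difference of the two separate sums. These sums include cross terms between (a)-strands and (b)-strands, whose signs $\sign(x_j-x'_j)$ are constant across $t$ by the absence of half-singular ghost-crossings. I would verify explicitly that those cross terms are correctly attributed to $\Delta_j^{\sing}$ and are not double counted in the ghost terms.
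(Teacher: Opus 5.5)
Most of your argument is sound. Constancy of $\Phi_j$ on components of $U$, the treatment of pairs of class-(b) strands (with or without a ghost-crossing at $t$), and disjointness via the absence of half-singular ghost-crossings all match the paper.

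\textbf{The gap is in how you define $\Delta_j^{\sing}(t)$.} The statement requires it to consist \emph{only} of pair-potentials between points whose strands both hit the singular point $y$. By putting every (a)--(b) cross pair into it, you make the decomposition a tautology and skip the one nontrivial step: these cross terms cancel exactly.

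Fix a class-(b) strand with multiplicity $\Theta_r$ and rightward tangent $s^r$; both are unchanged across $t$. Its $j$th-coordinate difference with any (a)-strand has constant sign on each side of $t$:
\begin{itemize}
\item it is affine in time;
\item it is nonvanishing on $[t_0,t)$ and on $(t,t_1]$, since these avoid $V$;
\item it is nonzero at $t$, since there are no half-singular ghost-crossings.
\end{itemize}
Hence every (a)-strand, on either side of $t$, has the same sign $\sigma=\sign(y_j-x^r_j(t))$ relative to this strand. Write $(\Theta_a,s^a)$ for the strands entering $y$ and $(\Theta_b,s^b)$ for those leaving it. The net change of the cross terms with this strand is
\[
\sigma\,\Theta_r\Bigl[\Bigl(\sum_b\Theta_b-\sum_a\Theta_a\Bigr)\langle s^r,E_j\rangle-\Bigl\langle \sum_b\Theta_b s^b-\sum_a\Theta_a s^a,\,E_j\Bigr\rangle\Bigr]=0 .
\]
This vanishes by the multiplicity balance at $y$ (the $k$-cover condition) together with the stationarity condition at $y$. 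This computation is exactly the content of the paper's proof.

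\textbf{Why this matters.} Individually, each cross term is only controlled by $|p_j|\le 4\Theta\Theta'\delta$. Lemma \ref{pair-potential-sign} does not help, because there is no canonical pairing of the (a)-strands across $t$ and no forced slope comparison with the (b)-strand. So with your definition, the change attributed to $\Delta_j^{\sing}(t)$ could a priori be of order $k\,\Theta(y)\,\delta$.

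Proposition \ref{strength-pays-potential} obtains $\Delta_j^{\sing}(t)\le 16\binom{\Theta(q)}{2}\delta^3$ precisely by writing $\Delta_j^{\sing}(t)$ as a difference of sums over pairs inside $Z^1$ and inside $Z^0$, and applying Lemma \ref{pair-potential-sign} to each such pair. The global counting argument needs every singular time to increase $\Phi$ by at most $O(\delta^3)$. You need to add the cancellation above to close the proof.
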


    \begin{proof}
    For $i \in \{0,1\}$ let $Z^i_{\sing}$ be either the set of points in $X_{t_i}$ whose associated geodesics hit the unique singular point in $X_t$, if it exists, or the empty set, if there is no singular point in $X_t$. Let $Z^i_\reg = X_{t_i} \setminus Z^{i}_\sing.$ Because the geodesics do not hit any singular points, there is a bijection $\phi: Z^{0}_\reg \to Z^1_\reg$, so when the index $i$ does not matter, we call it $Z_\reg.$

    Because $X$ is good, there are no half-singular ghost-crossings. That means that if there is a singular point $x_\sing \in X_t$, then for all $x \in X_t \setminus \{x_\sing\}$, $x_j \neq (x_{\sing})_j$, so we do not have to worry about the indicator function changing for pair potentials which are between $Z^i_\reg$ and $Z^i_\sing.$ 
    
    Suppose first that there is a singular point in $X_t$.
    To show that pair potentials between geodesics which hit the singular point and those which don't 
    yield no change to $\Delta_j(t)$, 
    fix one $x^0_\reg \in Z^{0}_\reg$ and let $x^1_\reg = \phi(x^0_\reg).$ Then, up to a sign which doesn't change,

    \[\sum_{x^0_\sing \in Z^0_\sing} p_j(x^0_\sing, x^0_\reg) - \sum_{x^1_\sing \in Z^1_\sing} p_j(x^1_\sing, x^1_\reg)\]
    \[
= 
   \left( \pm \sum_{x^0_\sing \in Z^0_\sing} \Theta(x^0_\sing) \Theta(x^0_\reg) \, (\langle s_{x^0_\sing}, E_j\rangle - \langle s_{x^0_\reg}, E_j \rangle) \right) \] \[ - \left( \pm \sum_{x^1_\sing \in Z^1_\sing} \Theta(x^1_\sing) \Theta(x^1_\reg) \, (\langle s_{x^1_\sing}, E_j\rangle - \langle s_{x^1_\reg}, E_j \rangle) \right)
    \]

\[
= 
   \left( \pm \Theta(x^0_\reg) \sum_{x^0_\sing \in Z^0_\sing} \Theta(x^0_\sing)  \, \langle s_{x^0_\sing}, E_j\rangle \right)  - \left( \pm \Theta(x^1_\reg) \sum_{x^1_\sing \in Z^1_\sing} \Theta(x^1_\sing)  \, \langle s_{x^1_\sing}, E_j\rangle \right)
    \]
    \[
-  \left( \pm \Theta(x^0_\reg) \langle s_{x^0_\reg},E_j\rangle  \sum_{x^0_\sing \in Z^0_\sing} \Theta(x^0_\sing) \right)  + \left( \pm \Theta(x^1_\reg)\langle s_{x^1_\reg},E_j\rangle \sum_{x^1_\sing \in Z^1_\sing} \Theta(x^1_\sing)  \right)
    \]

    But this is zero, because $\Theta(x^0_\reg) = \Theta(x^1_\reg)$, $s_{x^0_\reg} = s_{x^1_\reg}$, 
     the stationary condition at the singular point, and since the multiplicity entering the singular point is equal to the multiplicity leaving (by the $k$-cover condition).

    Since the slopes of all geodesics not hitting the singular point in $X_t$ do not change past $t$, the only pair-potentials which change are from changes in the sign function $\sign(x'_j-x_j)$, which are from ghost-crossings. That completes the proof.
    \end{proof}

    The following lemma tells us that the change of pair potential at a ghost-crossing cannot be large and positive.

    \begin{lemma} \label{ghost-crossing-potential}
    Assume that $\delta \leq \frac{1}{2}$.
        Suppose that $t \in V$ and that $(x,x',j)$ is a ghost-crossing at time $t$. Then in the notation of Proposition \ref{potential-local}, we have
        \[\Delta_j^{\mathrm{ghost}}(x,x',t) \leq 16 \Theta(x) \Theta(x') \delta^3.\]
    \end{lemma}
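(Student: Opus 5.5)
Since $X$ is good, the ghost-crossing $(x,x',j)$ is regular and non-degenerate. So $x,x'$ lie on regular geodesic segments through time $t$, with rightward unit tangents $s=s_x$ and $s'=s_{x'}$ that are constant across $t$, and $\frac{s_j}{s_1}\neq\frac{s'_j}{s'_1}$. The plan is to compute the jump of $p_j$ directly. Across $t$ the weights $\Theta(x)\Theta(x')$ and the slope difference $s'_j-s_j$ do not change, so only the sign factor $\sign(x_j-x'_j)$ changes. Parametrize the two segments in time as $x(\tau),x'(\tau)$ with $x_1(\tau)=x'_1(\tau)=\tau$, as in Lemma \ref{bound-ghost}. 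Then $x_j(\tau)-x'_j(\tau)=(\tau-t)\left(\frac{s_j}{s_1}-\frac{s'_j}{s'_1}\right)$, which vanishes only at $\tau=t$. Hence the sign flips exactly once, and
\[\Delta_j^{\mathrm{ghost}}(x,x',t)=2\,\Theta(x)\Theta(x')\,(s'_j-s_j)\,\sign\!\left(\frac{s_j}{s_1}-\frac{s'_j}{s'_1}\right).\]
This holds up to the global sign convention in $p_j$, which is symmetric under swapping $x$ and $x'$. I will fix the orientation carefully from the definition of $p_j$ so that the formula holds as written.

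Next, I reuse the decomposition from the proof of Lemma \ref{pair-potential-sign}:
\[s'_j-s_j=s'_1\left(\frac{s'_j}{s'_1}-\frac{s_j}{s_1}\right)+(s'_1-s_1)\frac{s_j}{s_1}.\]
Lemma \ref{almost-centered-slope-bounds} gives $|s'_1-s_1|\le\delta^2$ and $|s_j/s_1|\le4\delta$ when $\delta\le\frac12$, so the second term is at most $4\delta^3$ in absolute value. The first term has sign opposite to $\sign\!\left(\frac{s_j}{s_1}-\frac{s'_j}{s'_1}\right)$, so its product with that sign is $\le 0$. This reflects the geometry: two lines that cross in direction $j$ were pointing toward each other before the crossing and are pointing apart after it, so the potential decreases. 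It follows that
\[\Delta_j^{\mathrm{ghost}}\le 2\,\Theta(x)\Theta(x')\cdot 4\delta^3\le 16\,\Theta(x)\Theta(x')\delta^3.\]
Equivalently, one can quote Lemma \ref{pair-potential-sign} on both sides of $t$: before the crossing $p_j\ge-8\Theta\Theta'\delta^3$, after it $p_j\le 8\Theta\Theta'\delta^3$, and subtracting gives the same bound.

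The main obstacle is getting the orientation right: checking that before the crossing the points are ordered so that $p_j$ is the "toward each other" quantity (at least $-8\Theta\Theta'\delta^3$), and after it the "away" quantity (at most $8\Theta\Theta'\delta^3$). I will settle this by the explicit linear formula for $x_j-x'_j$ above. Say $\frac{s_j}{s_1}>\frac{s'_j}{s'_1}$. For $\tau<t$ we then have $x_j<x'_j$, and relabeling so the lower point is primed matches the hypothesis of Lemma \ref{pair-potential-sign} with $\frac{s'_j}{s'_1}\ge\frac{s_j}{s_1}$ in the lemma's labeling. The case $\tau>t$ and the opposite inequality of slopes are symmetric.
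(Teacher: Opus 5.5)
Your proposal is correct and is essentially the paper's proof: the paper assumes $(x_0')_j<(x_0)_j$, deduces $\frac{s'_j}{s'_1}>\frac{s_j}{s_1}$, and applies Lemma \ref{pair-potential-sign} before and after the crossing to get $p_j(x_0,x_0')\ge -8\Theta(x)\Theta(x')\delta^3$ and $p_j(x_1,x_1')\le 8\Theta(x)\Theta(x')\delta^3$, which is exactly your ``equivalent'' route. Your direct formula $\Delta_j^{\mathrm{ghost}}=2\Theta(x)\Theta(x')(s'_j-s_j)\,\sign\bigl(\frac{s_j}{s_1}-\frac{s'_j}{s'_1}\bigr)$ is the same argument unpacked. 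Your constant $8$ comes only from bounding $(s'_1-s_1)\frac{s_j}{s_1}$ by $4\delta^3$, which is what the ingredients of Lemma \ref{pair-potential-sign} actually give, rather than its stated $8\delta^3$.
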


    \begin{proof}
        Like before, choose $t_0 < t < t_1$ with $[t_0,t_1] \cap V = \{t\}.$ Let $s = s_x$, $s' = s_{x'}$. Let $x_0,x'_0 \in X_{t_0}$ be the points on the geodesics which will reach $x,x'$, and similarly $x_1,x'_1 \in X_{t_1}$. Without loss of generality, we may assume that $(x_0')_j < (x_0)_j$. Then since the geodesics will cross in the $j$th coordinate,
        we must have $\frac{s'_j}{s'_1}> \frac{s_j}{s_1}$. 
        By Lemma \ref{pair-potential-sign}, it follows that $p_j(x_0,x'_0) \geq -8\Theta(x)\Theta(x')\delta^3$, and $p_j(x_1,x'_1) \leq 8\Theta(x)\Theta(x')\delta^3$. Since
        $\Delta_j^{\mathrm{ghost}}(x,x',t) = p_j(x_1,x'_1)-p_j(x_0,x'_0),$
        we are done.
    \end{proof}

    Next, we show that at a singular point, the collision potential must decrease proportionally to a scale quantity of the singular point, which we call its strength, as long as the strength is large at scale $\delta$.

    \begin{definition}
        The \emph{strength} of a singular point $x \in X_\sing$, denoted by $\strength(x)$, is defined as follows. Let $s_1,\ldots,s_\ell$ be the rightward-pointing unit tangent vectors to $X$ which point into $x$ from the left (with respect to the $E_1$-direction). Then
        \[\strength(x) = \max_{i \neq j}|s_i-s_j|.\]
    \end{definition}

    \begin{proposition} \label{strength-pays-potential} Assume that $\delta \leq \frac{1}{2}.$
    Suppose that $t \in V$ and there is a singular point $q \in X_t$. 
    For all $j \geq 2$, we have
\[\Delta^\sing_{j}(t) \leq 16 {\Theta(q) \choose 2}\delta^3.\]
Further, writing $\beta = \strength(q)$,
 suppose that $\beta > \delta^2$ and
    $\sqrt{\frac{\beta^2 - \delta^4}{n}} > 8\delta^3.$
    Then there is some $j^* \geq 2$ such that
    \[\Delta^\sing_{j^*}(t) \leq 16 {\Theta(q) \choose 2}\delta^3- \sqrt{\frac{\beta^2-\delta^4}{n}}.\]
    \end{proposition}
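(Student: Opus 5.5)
We will work at a fixed singular time $t \in V$ with singular point $q \in X_t$, and fix $t_0 < t < t_1$ in $U$ with $[t_0,t_1] \cap V = \{t\}$, as in Proposition \ref{potential-local}. Let $a_1,\ldots,a_\ell \in X_{t_0}$ be the points on the geodesics entering $q$ from the left, and $b_1,\ldots,b_{m} \in X_{t_1}$ the points on the geodesics leaving $q$ to the right. Then $\Delta^\sing_j(t)$ is the sum of $p_j(b_a,b_b)$ over unordered pairs of outgoing points, minus the sum of $p_j(a_a,a_b)$ over unordered pairs of incoming points.

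By the $k$-cover condition the incoming and outgoing multiplicities both equal $M$, so $\Theta(q) = \frac{1}{2}(M+M) = M$. Hence
\[\sum_{a<b}\Theta(a_a)\Theta(a_b) \leq {\Theta(q) \choose 2},\]
and likewise for the outgoing points. The plan is to bound each pair potential using Lemma \ref{pair-potential-sign}, and the key input is the \emph{ordering} of the points in the $j$th coordinate.

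\textbf{Ordering of incoming and outgoing points.} The incoming geodesics all pass through $q$, so just before $t$ their $j$th coordinates are $q_j - (t-t_0)\frac{s_j}{s_1}$. Consequently, if $(a_b)_j < (a_a)_j$, then the slope ratio of $a_b$ is at least that of $a_a$. The incoming pairs are therefore pointing toward each other, and Lemma \ref{pair-potential-sign} gives
\[p_j(a_a,a_b) \geq -8\Theta\Theta'\delta^3.\]
Symmetrically, the outgoing points have $j$th coordinates $q_j + (t_1-t)\frac{s_j}{s_1}$. If $(b_b)_j < (b_a)_j$, the slope ratio of $b_b$ is at most that of $b_a$, so
\[p_j(b_a,b_b) \leq 8\Theta\Theta'\delta^3.\]
Pairs with equal $j$th coordinate have $\sign = 0$ and contribute nothing. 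Summing the outgoing upper bounds and subtracting the incoming lower bounds, with the pair-weight bound above, gives $\Delta^\sing_j(t) \leq 16{\Theta(q)\choose 2}\delta^3$. This proves the first claim.

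\textbf{Finding a direction that pays.} For the second claim, choose incoming tangents $s = s_{a_a}$ and $s' = s_{a_b}$ with $|s-s'| = \beta$; these are distinct incoming geodesics. By Lemma \ref{almost-centered-slope-bounds}, $|s_1 - s_1'| \leq \delta^2$, so
\[\sum_{j \geq 2}(s_j-s_j')^2 \geq \beta^2 - \delta^4 > 0,\]
where positivity uses $\beta > \delta^2$. Hence some $j^*$ satisfies
\[|s_{j^*} - s'_{j^*}| \geq \sqrt{\frac{\beta^2-\delta^4}{n-1}} \geq \sqrt{\frac{\beta^2-\delta^4}{n}} > 8\delta^3.\]
Now reuse the decomposition from the proof of Lemma \ref{pair-potential-sign}:
\[s'_{j^*} - s_{j^*} = s_1'\left(\frac{s'_{j^*}}{s'_1} - \frac{s_{j^*}}{s_1}\right) + \mathrm{err}, \qquad |\mathrm{err}| \leq 8\delta^3.\]
Since $|s'_{j^*}-s_{j^*}| > 8\delta^3$, the error term cannot flip the sign, so the sign of $s'_{j^*}-s_{j^*}$ agrees with the sign of the ratio difference. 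By the ordering fact, the ratio difference has the same sign as the position difference; in particular, equal positions are excluded because equal ratios would force $|s'_{j^*} - s_{j^*}| \leq 8\delta^3$. Therefore
\[p_{j^*}(a_a,a_b) = \Theta\Theta'|s'_{j^*}-s_{j^*}| \geq \sqrt{\frac{\beta^2-\delta^4}{n}}.\]

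\textbf{Conclusion.} Bound this one incoming pair from below by $\sqrt{\frac{\beta^2-\delta^4}{n}}$ instead of by $-8\Theta\Theta'\delta^3$, and bound all other pairs as in the first part. This gives
\[\Delta^\sing_{j^*}(t) \leq 16{\Theta(q)\choose 2}\delta^3 - \sqrt{\frac{\beta^2-\delta^4}{n}},\]
since dropping one incoming pair's $8\Theta\Theta'\delta^3$ only helps.

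The main obstacle is the sign bookkeeping in the second step. The pair potential is built from $s_j$ rather than from the slope ratio $\frac{s_j}{s_1}$, so the sign of a single pair potential has to be certified by the hypothesis $\sqrt{\frac{\beta^2-\delta^4}{n}} > 8\delta^3$. Care is also needed with pairs at equal $j$th position, which is why the ordering argument and the exclusion of equal positions above are essential.
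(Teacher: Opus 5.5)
Your proposal is correct and follows the paper's proof essentially step for step. You bound the incoming and outgoing pair potentials via Lemma \ref{pair-potential-sign}, using the converging/diverging ordering of the geodesics at $q$. You then take a strength-attaining incoming pair, pick $j^*$ by pigeonhole, and use $\sqrt{(\beta^2-\delta^4)/n} > 8\delta^3$ to fix the sign of that pair's potential. The only cosmetic difference is how equal $j^*$-positions at time $t_0$ are ruled out: you use the nonvanishing slope-ratio difference, whereas the paper notes that $t_0 \in U$ has no ghost-crossings. Both arguments work.
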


    \begin{proof}
        As before fix $t_0 < t < t_1$ with $[t_0,t_1] \cap V =\{t\}$. For $i \in \{0,1\}$ let $Z^i$ be the set of points in $X_{t_i}$ whose associated geodesics hit the singular point in $X_t.$ We claim that for all $j \geq 2$ and $x \neq x' \in Z^0$, $p_j(x,x') \geq -8 \Theta(x)\Theta(x') \delta^3.$ Indeed, this follows directly from Lemma \ref{pair-potential-sign}, since the geodesics meeting at the singular point yields the right slope comparison, as they are coming together. Similarly, for all $j \geq 2$ and $x \neq x' \in Z^1$, $p_j(x,x') \leq 8 \Theta(x) \Theta(x') \delta^3.$ Summing, we find that for any $j \geq 2$,
        \[\Delta_j^{\sing}(t) = \left(\sum_{\{x,x'\} \subset Z^1} p_j(x,x')\right) - \left(\sum_{\{x,x'\} \subset Z^0} p_j(x,x') \right) \leq 2 {\Theta(q) \choose 2} 8 \delta^3 = 16 {\Theta(q) \choose 2} \delta^3.\]

        Now, suppose that $\beta = \strength(q)$ satisfies $\beta > \delta^2$ and 
        $\sqrt{\frac{\beta^2 - \delta^4}{n}} > 8\delta^3.$
        Choose $x,x' \in Z^0$ with $|s_x-s_{x'}| = \beta$. Then \[\beta^2 = \sum_{j=1}^n \langle s_x-s_{x'},E_j\rangle^2.\]
        By Lemma \ref{almost-centered-slope-bounds}, 
        \[1 -\delta^2 \leq \langle s_x,E_1\rangle, \langle s_{x'},E_1\rangle \leq 1,\]
        so $|\langle s_x-s_{x'},E_1\rangle| \leq \delta^2.$ Therefore 
\[\sum_{j=2}^n \langle s_x-s_{x'},E_j\rangle^2 \geq \beta^2 - \delta^4.\]
By properties of sums, there must be some $j^* \geq 2$ such that
\[\langle s_x-s_{x'}, E_{j^*}\rangle^2 \geq \frac{\beta^2-\delta^4}{n-1} \geq \frac{\beta^2-\delta^4}{n},\]
so for $j^*$,
\[|\langle s_x - s_{x'},E_{j^*}\rangle| \geq \sqrt{\frac{\beta^2-\delta^4}{n}}.\]
Since $t_0 \in U$, there is no ghost-crossing at time $t_0$, so $x_{j^*} \neq x_{j^*}'$. Relabel so that $x'_{j^*} < x_{j^*}$. Since the $j^*$th coordinate of the geodesics will be equal at time $t$, we have
$\frac{\langle s_{x'},E_{j^*}\rangle}{\langle s_{x'},E_1\rangle} > \frac{\langle s_{x},E_{j^*}\rangle}{\langle s_{x},E_1\rangle}$.
Then by Lemma \ref{pair-potential-sign}, 
$\langle s_{x'}-s_{x},E_{j^*}\rangle \geq -8 \delta^3.$
Since we assumed that  $\sqrt{\frac{\beta^2 - \delta^4}{n}} > 8\delta^3$, we can remove the absolute value:
\[\langle s_{x'} - s_{x},E_{j^*}\rangle \geq \sqrt{\frac{\beta^2-\delta^4}{n}}.\]
All of the other terms are bounded in the same way as before. Summing we obtain that
 \[\Delta^\sing_{j^*}(t) \leq 16 {\Theta(q) \choose 2}\delta^3- \sqrt{\frac{\beta^2-\delta^4}{n}}.\]
    \end{proof}

    We can combine the above results as follows.

    \begin{proposition} \label{potential-jump-final}
        Assume that $\delta \leq \frac{1}{2}$. Suppose that $t \in V$. Then, for any $j \geq 2$, 
        \[\Delta_j(t) \leq 16{k \choose 2}\delta^3 \leq 8k^2\delta^3.\]
        Therefore, the jump $\Delta(t) = \sum_{j=2}^n \Delta_j(t)$ of $\Phi$ satisfies
        \[\Delta(t) \leq 8nk^2\delta^3.\] If further $t \in \tilde V$, and the singular point $x \in X_t$ has $\strength(x) = \beta$ satisfying $\beta > \delta^2$ and $\sqrt{\frac{\beta^2-\delta^4}{n}} > 8\delta^3$, then 
        \[\Delta(t) \leq 8nk^2\delta^3 - \sqrt{\frac{\beta^2-\delta^4}{n}}.\]
    \end{proposition}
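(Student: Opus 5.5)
The plan is to combine Proposition \ref{potential-local}, Lemma \ref{ghost-crossing-potential} and Proposition \ref{strength-pays-potential}. I will first bound the jump in each direction $j$, then sum over $j$, and finally improve the bound in the distinguished direction $j^*$ when $t$ is a singular time with a strong singular point.

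\textbf{Fixed direction $j \geq 2$.} By Proposition \ref{potential-local}, $\Delta_j(t) = \Delta_j^\sing(t) + \sum_{\mathcal{G}_j(t)} \Delta_j^{\mathrm{ghost}}(x,x',t)$. I bound the two parts as follows.

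\begin{enumerate}
\item If $X_t$ has a singular point $q$, Proposition \ref{strength-pays-potential} gives $\Delta_j^\sing(t) \leq 16\binom{\Theta(q)}{2}\delta^3$. If there is no singular point, this term is $0$.
\item Each ghost-crossing $(x,x',j)$ at time $t$ contributes at most $16\Theta(x)\Theta(x')\delta^3$, by Lemma \ref{ghost-crossing-potential}.
\end{enumerate}

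The key bookkeeping step is that all these pairs can be charged to a single quantity. Since $X$ is good, ghost-crossings are regular, so they involve only points of $X_t$ other than the singular point. Now duplicate each point of $X_t$ according to its multiplicity, giving $k$ copies in total since $X$ is a $k$-cover.

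\begin{itemize}
\item The singular term is indexed by pairs among the $\Theta(q)$ copies of the singular point, namely pairs of geodesic strands entering it.
\item Each ghost-crossing $\{x,x'\}$ is indexed by $\Theta(x)\Theta(x')$ pairs of copies of two distinct regular points.
\item The ghost-crossings counted are distinct unordered pairs, and they are disjoint from the pairs at $q$.
\end{itemize}

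So every unit of $\delta^3$-weight corresponds to a distinct unordered pair among the $k$ copies. The total weight is therefore at most $\binom{k}{2}$, which gives
\[
\Delta_j(t) \leq 16\binom{k}{2}\delta^3 \leq 8k^2\delta^3.
\]
Summing over the $n-1 \leq n$ directions $j \geq 2$ then gives $\Delta(t) \leq 8nk^2\delta^3$.

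\textbf{Strong singular point.} Suppose $t \in \tilde V$ and $\beta = \strength(x)$ satisfies the two hypotheses. Proposition \ref{strength-pays-potential} then supplies a direction $j^*$ with $\Delta^\sing_{j^*}(t) \leq 16\binom{\Theta(q)}{2}\delta^3 - \sqrt{(\beta^2-\delta^4)/n}$. Rerunning the same bookkeeping in direction $j^*$ gives
\[
\Delta_{j^*}(t) \leq 8k^2\delta^3 - \sqrt{(\beta^2-\delta^4)/n}.
\]
The other directions keep the bound $8k^2\delta^3$. Summing over all $j \geq 2$ yields the claimed bound on $\Delta(t)$.

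The only step needing care is the counting that makes the singular term and all the ghost-crossing terms fit together under one $\binom{k}{2}$. If that disjoint-pair argument proved awkward, I would fall back to a cruder but legitimate bound. Since $\Theta(q) \le k$ and $\sum_{\{x,x'\}}\Theta(x)\Theta(x') \le \binom{k}{2}$, the total is at most $32\binom{k}{2}\delta^3 \le 16k^2\delta^3$, and I would adjust the constants accordingly.
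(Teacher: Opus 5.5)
Your proposal is correct and follows the paper's proof. You decompose $\Delta_j(t)$ via Proposition \ref{potential-local}, bound the ghost-crossing terms by Lemma \ref{ghost-crossing-potential} and the singular term by Proposition \ref{strength-pays-potential}, then sum over the $n-1$ directions, with the extra $-\sqrt{(\beta^2-\delta^4)/n}$ in direction $j^*$. Your explicit disjoint-pair count $\binom{\Theta(q)}{2}+\sum_{\{x,x'\}}\Theta(x)\Theta(x')\le\binom{k}{2}$ (from $\sum_{x\in X_t}\Theta(x)=k$ and the fact that good networks have only regular ghost-crossings) is the step the paper compresses into ``treat the singular point similarly''; it is sound, so you do not need the fallback with worse constants.
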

        \begin{proof}
            First suppose that $X_t$ contains no singular points of $X$. Then $\sum_{x \in X_t}\Theta(x) = k$, so $\sum_{\{x,x'\} \subset X_t} \Theta(x)\Theta(x') \leq {k \choose 2}. $ Along with Lemma \ref{ghost-crossing-potential}, this shows that for each $j \geq 2$, $\Delta_j(t) \leq 16 {k \choose 2} \delta^3 \leq 8k^2\delta^3$. Now suppose that $X_t$ has one singular point of $X$, along with Proposition \ref{strength-pays-potential}, we can treat the singular point similarly, and we get the same conclusion. Finally, assuming that $\strength(x) = \beta$ and $\beta > \delta^2$ and $\sqrt{\frac{\beta^2-\delta^4}{n}} > 8\delta^3$, the same argument holds again, except this time we have the extra term $-\sqrt{\frac{\beta^2-\delta^4}{n}},$ which completes the proof.
        \end{proof}

\section{Spreads and Merging}
\label{sec:spreadsmerging}

In this section, $X \subset \mathbb{R}^n$ is an ISGN which is a $k$-cover, $\theta$-flat, $\theta \in [0,\frac{\pi}{2})$, $\gamma$-centered, $\gamma \in [0,\frac{1}{4})$, good, and with $|X_\sing| < \infty.$ All unit tangent vectors to $X$ will always be taken to be rightward pointing, with respect to the $E_1$ direction.

\begin{definition}
    Fix $t \in \tilde U$ and an arbitrary subset $Z \subset X_t$. We define the \emph{spread} $\spread(Z)$ of $Z$ by 
    \[\spread(Z)^2 = \frac{1}{2} \sum_{x,x' \in Z} \Theta(x)\Theta(x') |s_x-s_{x'}|^2. \]
\end{definition}

The spread is conserved past singular times, as long as we don't bring in new geodesics.

\begin{proposition}
    Suppose that $I \subset \mathbb{R}$ is connected. Let $Z$ be a union of connected components of $X_I.$ For any $t \in I$, let $Z_t = Z \cap \pi^{-1}(t)$. Then for any $t,t' \in I \cap \tilde U$, $\spread(Z_t) = \spread(Z_{t'})$. Thus the spread is an invariant quantity of $Z$, which we also call its spread, $\spread(Z).$
\end{proposition}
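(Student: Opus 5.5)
The plan is to imitate the proof that $W$ is conserved, applied to the sub-network $Z$, and then use the polarization identity from Lemma \ref{spread-formula}.

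\emph{Step 1: a local weight vector.} For $t \in I \cap \tilde U$ set
\[W_Z(t) = \sum_{x \in Z_t} \Theta(x)\, s_x, \qquad m_Z(t) = \sum_{x \in Z_t} \Theta(x).\]
First I will show that both are independent of $t \in I \cap \tilde U$. Both quantities are locally constant on $I \cap \tilde U$, since between singular times each geodesic segment keeps its direction and multiplicity. It therefore suffices to check that neither jumps across a singular time $t \in I \cap \tilde V$.

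Fix a singular point $q \in X_t$. Because $Z$ is a union of connected components of $X_I$, and every geodesic hitting $q$ lies in the same component of $X_I$ as $q$, there are two cases: either all geodesics entering and leaving $q$ belong to $Z$, or none do.

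\begin{itemize}
\item In the first case, the stationarity identity $\sum \Theta_i s_i = \sum \Theta'_i s'_i$ from the proof that $W$ is invariant shows that the contribution of $q$ to $W_Z$ is the same just before and just after $t$.
\item The $k$-cover condition, that multiplicity in equals multiplicity out, gives the same conclusion for $m_Z$.
\end{itemize}

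Points of $Z_t$ on geodesics not hitting singular points carry the same $\Theta$ and $s_x$ on both sides of $t$. The one thing to justify is that $I$ is connected, so that $Z_{t_0}$ and $Z_{t_1}$ are related through such local passages. This holds because the component structure is taken inside $X_I$ and $t_0, t_1 \in I$.

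\emph{Step 2: the spread formula.} Duplicate points according to multiplicity, exactly as in the proof of Lemma \ref{spread-formula}. This gives
\[\spread(Z_t)^2 = \tfrac12 \sum_{x,x' \in Z_t} \Theta(x)\Theta(x')\,(2 - 2\langle s_x, s_{x'}\rangle) = m_Z(t)^2 - |W_Z(t)|^2.\]
Since $m_Z$ and $W_Z$ are constant on $I \cap \tilde U$ by Step 1, $\spread(Z_t)$ is constant there as well.

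The main (minor) obstacle is the bookkeeping in Step 1. One has to be sure that a singular point cannot have some of its incident geodesics inside $Z$ and others outside. This is immediate from connectivity: all incident edges lie in the connected component of $X_I$ containing $q$, provided $t \in I$. One should also check the endpoint issue when $I$ is closed and $t$ is an endpoint of $I$; this causes no trouble, because we only compare times in $I \cap \tilde U$, which lie strictly on one side of any such endpoint.
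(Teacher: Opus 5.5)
Your proposal is correct and is essentially the paper's argument. The paper extends $Z$ past the ends of $I$ to a full $\ell$-cover ISGN and then cites the invariance of $W$ together with Lemma \ref{spread-formula}, whose content is exactly $\spread^2 = \ell^2 - |W|^2$. You carry out the same conservation argument and the same identity directly on the slab, which also spares you from checking that the extended network is a legitimate ISGN.
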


\begin{proof}
    Consider $Z$ as the $I$-time slab of a new (not necessarily nearly centered) $\theta$-flat ISGN in $\mathbb{R}^n$, by extending the geodesics on either end of $I$ out to infinity. Since at every singular point of $Z$, the total multiplicity of geodesics leaving on either side, with respect to the $E_1$-direction, is equal, $Z$ is an $\ell$-cover for some $\ell \leq k$. Thus we can apply our theory, with $X$ as this extended $Z$. Conservation then follows by Lemma \ref{spread-formula}. 
\end{proof}

\begin{definition}
    Suppose that $I \subset \mathbb{R}$ is connected. As above, any union $Z$ of connected components of $X_I$ is an $\ell$-cover for some $\ell \leq k$. Let the \emph{total multiplicity} of $Z$, denoted by $\Theta(Z)$, be equal to $\ell$. Notice that $\Theta(X_I) = k.$
\end{definition}

Next, we will define a series of partitions, which track a merging process forward in time.

\begin{definition}
    Fix $t_0 \in \tilde U$. For each $t \geq t_0$, let $P_t$ be the set of connected components of $X_{[t_0,t]}$. $P_t$ induces a partition $p_t$ of $X_t$, by taking the intersection of each component with $X_t$.
\end{definition}

Observe that $p_{t_0} = \{\{x\}: x \in X_{t_0}\}.$ 

\begin{definition}
    We say that $X$ \emph{merges completely} after $t_0$ if $X_{[t_0,\infty)}$ is connected. Otherwise, we say that $X$ \emph{splits} after $t_0.$
\end{definition}

So far we have not excluded the case where $X_{t_0}$ is connected. That is, $X_{t_0}$ is a singleton. But since $t_0 \in \tilde U$, then $\delta = 0$, and then $X$ is just a multiplicity $k$ line. We will continue not to technically exclude this case.

\begin{proposition} \label{merge-spread-estimate}
    Suppose that $t \in \tilde V$, $t > t_0$ is a singular time, and let $x_\sing \in X_t$ be the unique singular point in that timeslice. Let $Z$ be a connected component of $X_{[t_0,t]}$, and let $z_1,\ldots,z_m$ be the connected components of $Z \cap X_{[t_0,t)}$. If $m=1$, then $\spread(Z) = \spread(z_1)$. Else if $m \geq 2$, we have a ``merging'' spread estimate
    \[\spread(Z)^2 \leq \left( \sum_{i=1}^m \spread(z_i)^2 \right) + 2\left( \sum_{1 \leq i < i' \leq m} \Theta(z_i)\Theta(z_{i'})(\spread(z_i)+\spread(z_{i'}))^2 \right) + \bigg(\Theta(Z)^2 \strength(x_\sing)^2 \bigg).\]
\end{proposition}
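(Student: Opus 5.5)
The plan is to compute every spread on a single regular time slice just before $t$, and then bound the cross terms between different $z_i$ using the strength of $x_\sing$.

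\textbf{Setup.} Fix $t' \in \tilde U$ with $t_0 < t' < t$ and $[t',t) \cap \tilde V = \varnothing$. Such a $t'$ exists because $\tilde V$ is finite. By the invariance of spread (the preceding proposition), $\spread(Z)$ and each $\spread(z_i)$ may be computed on the slices $Z_{t'}$ and $(z_i)_{t'}$. These slices satisfy $Z_{t'} = \bigsqcup_i (z_i)_{t'}$, and $\Theta(z_i)$ is the total multiplicity of $(z_i)_{t'}$.

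\textbf{The case $m=1$.} Here $Z$ is $z_1$ together with the points of $X_t$ it reaches. Its slice at $t'$ is the slice of $z_1$, so $\spread(Z)=\spread(z_1)$ immediately.

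\textbf{The case $m\ge 2$: splitting the double sum.} I would split the defining double sum for $\spread(Z_{t'})^2$ into pairs within a single $(z_i)_{t'}$ and pairs across two different pieces. The within-piece pairs give exactly $\sum_i \spread(z_i)^2$. The factor $\tfrac12$ applied to ordered cross pairs gives
\[\sum_{i<i'}\ \sum_{x \in (z_i)_{t'},\, x' \in (z_{i'})_{t'}} \Theta(x)\Theta(x')\,|s_x - s_{x'}|^2 .\]

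\textbf{Key geometric observation.} Since $m\ge 2$ and $Z$ is connected while the $z_i$ are distinct components of $Z\cap X_{[t_0,t)}$, the only way they can be joined at time $t$ is through the unique singular point $x_\sing$. Hence each $z_i$ contains a geodesic entering $x_\sing$ from the left. Pick a point $y_i \in (z_i)_{t'}$ on such a geodesic and let $a_i = s_{y_i}$. The slope $s$ is constant along the segment between $t'$ and $t$, so $a_i$ is one of the incoming tangents at $x_\sing$. By the definition of strength, $|a_i - a_{i'}| \le \strength(x_\sing)=:\beta$.

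Moreover, for any $x \in (z_i)_{t'}$, the ordered pairs $(x,y_i)$ and $(y_i,x)$ together contribute $\Theta(x)\Theta(y_i)|s_x-a_i|^2$ to $\spread(z_i)^2$. Since multiplicities are at least $1$, this gives $|s_x - a_i| \le \spread(z_i)$.

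\textbf{Bounding the cross terms.} By the triangle inequality,
\[|s_x-s_{x'}| \le \spread(z_i)+\spread(z_{i'})+\beta,\]
and by $(u+v)^2\le 2u^2+2v^2$,
\[|s_x-s_{x'}|^2 \le 2(\spread(z_i)+\spread(z_{i'}))^2 + 2\beta^2 .\]
Summing with weights, the cross terms are at most
\[2\sum_{i<i'}\Theta(z_i)\Theta(z_{i'})(\spread(z_i)+\spread(z_{i'}))^2 \;+\; 2\beta^2\sum_{i<i'}\Theta(z_i)\Theta(z_{i'}).\]
Finally, $\sum_{i<i'}\Theta(z_i)\Theta(z_{i'}) \le \tfrac12\big(\sum_i\Theta(z_i)\big)^2 = \tfrac12\Theta(Z)^2$, which turns the last term into $\Theta(Z)^2\beta^2$ and yields the stated estimate.

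\textbf{Main obstacle.} The step needing the most care is the topological claim that every $z_i$ actually reaches $x_\sing$ from the left. This holds because the pieces are disjoint before $t$, so they can only be joined within $X_t$. Goodness gives exactly one singular point in $X_t$. Two distinct regular points of $X_t$ cannot connect anything, since regular points lie on single geodesic segments. So any connection must pass through $x_\sing$.
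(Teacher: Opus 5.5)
Your proposal is correct and follows essentially the same argument as the paper. Both work on a regular slice $t'$ just before $t$ and split $\spread(Z)^2$ into within-component and cross terms. Both bound each cross term by $\spread(z_i)+\spread(z_{i'})+\strength(x_\sing)$, using tangents of geodesics entering $x_\sing$ from the left. Both finish with $(a+b)^2\le 2a^2+2b^2$ and $2\sum_{i<i'}\Theta(z_i)\Theta(z_{i'})\le \Theta(Z)^2$. You also fill in two details the paper leaves implicit: the $m=1$ case, and why every $z_i$ must reach $x_\sing$.
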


\begin{proof} Let $x_\sing$ be the singular point of $X$ in $X_t$.
    Choose $t' < t$ such that $[t',t] \cap \tilde V = \{t\}.$ Then
    \[\spread(Z)^2 = \frac{1}{2}\sum_{x,y\in Z_{t'}} \Theta(x) \Theta(y) |s_x-s_y|^2.\]
    We can rewrite this as 
    \[\spread(Z)^2 = \frac{1}{2} \left[\sum_{i=1}^m \sum_{x,y \in z_{i,t'}} \Theta(x) \Theta(y) |s_x-s_y|^2 \right] + \left[ \sum_{1 \leq i < i' \leq m} \sum_{\substack{x \in z_{t',i} \\ y \in z_{t',i'}}} \Theta(x) \Theta(y) |s_x-s_y|^2\right].\]
    Notice that in the left term,

    \[\frac{1}{2} \sum_{x,y \in z_{i,t'}} \Theta(x)\Theta(y) |s_x-s_y|^2 = \spread(z_i)^2.\]

    To bound the right term, fix $i < i'$ and $x \in z_{t',i}$, $y \in z_{t',i'}$. Let $x_\hit \in z_{i,t'}$ be on a geodesic that will hit $x_\sing$, and $y_{\hit} \in z_{t',i'}$ be on a (necessarily distinct) geodesic that will hit $x_\sing$. We can guarantee this because $x_\sing$ is the only singular point in $X_t$, and $Z$ is connected. By the definition of the spread of $z_i$ and $z_{i'}$ and throwing away all the other terms, we have
    $|s_x-s_{x_\hit}| \leq \spread(z_i)$ and $|s_y-s_{y_\hit}| \leq \spread(z_{i'})$. By the definition of the strength of the collision at $x_\sing$, we have $|s_{x_\hit} - s_{y_{\hit}}| \leq \strength(x_\sing).$
    By the triangle inequality, it follows that
    \[|s_x-s_y| \leq \spread(z_i) + \spread(z_{i'}) + \strength(x_\sing).\]
    By the elementary inequality $(a+b)^2 \leq 2a^2+2b^2$ for $a,b \geq 0$, we obtain 
     \[|s_x-s_y|^2 \leq 2(\spread(z_i) + \spread(z_{i'}))^2 + 2\strength(x_\sing)^2.\]
     Summing the bound over all $x \in z_{t',i} $ and $y \in z_{t',i'}$ with multiplicity, we have
     \[\sum_{\substack{x \in z_{t',i} \\ y \in z_{t',i'}}} \Theta(x) \Theta(y) |s_x-s_y|^2 \leq \Theta(z_i)\Theta(z_{i'})\big[2(\spread(z_i) + \spread(z_{i'}))^2 + 2 \strength(x_\sing)^2\big].\]
     Summing over $i,i'$ we have
     \[\sum_{1 \leq i < i' \leq m} \sum_{\substack{x \in z_{t',i} \\ y \in z_{t',i'}}} \Theta(x) \Theta(y) |s_x-s_y|^2 \] \[ \leq 2 \sum_{1 \leq i < i' \leq m} \Theta(z_i) \Theta(z_{i'}) (\spread(z_i)+\spread(z_{i'}))^2 + 2 \sum_{1 \leq i < i' \leq m} \Theta(z_i)\Theta(z_{i'}) \strength(x_\sing)^2.\]
     But 
     \[2\sum_{1 \leq i < i' \leq m} \Theta(z_i)\Theta(z_{i'}) \leq  \sum_{i,i'} \Theta(z_i)\Theta(z_{i'}) =  \left( \sum_{i=1}^m \Theta(z_i)\right) \left( \sum_{i'=1}^m \Theta(z_{i'})\right) = \Theta(Z)^2.\]
     Thus we have the bound
     \[\sum_{1 \leq i < i' \leq m} \sum_{\substack{x \in z_{t',i} \\ y \in z_{t',i'}}} \Theta(x) \Theta(y) |s_x-s_y|^2 \leq \Theta(Z)^2 \strength(x_\sing)^2 + 2 \sum_{1 \leq i < i' \leq m} \Theta(z_i) \Theta(z_{i'}) (\spread(z_i)+\spread(z_{i'}))^2.\]
     Putting everything together, the proof of the estimate is complete.
\end{proof}

The idea is to show by induction on Proposition \ref{merge-spread-estimate} that the spread of any connected component is bounded by the maximum strength of singular points along it.

\begin{proposition} \label{merge-spread-induction}
    Suppose that $t \in \tilde U$, $t \geq t_0$, and let $Z$ be a connected component of $X_{[t_0,t]}$. Let $F: \mathbb{N}_+ \to \mathbb{R}$ be the unique function satisfying
    \[F(1) = 0, \;\;\; F(\ell) = \sqrt{\ell^2 + \ell(1 + 4\ell^3)F(\ell-1)^2}, \; \ell \geq 2.\] Observe that $F$ is nondecreasing.
    Then 
    \[\spread(Z) \leq F(\Theta(Z)) \, \max_{x \in Z \cap X_\sing} \strength(x),\]
    where we take the maximum to be zero if $Z \cap X_\sing = \varnothing$.
\end{proposition}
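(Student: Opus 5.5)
Fix $t \in \tilde U$ with $t \ge t_0$. I will prove the bound by strong induction on the number of singular times in $(t_0,t)$, and inside that step I will argue by the total multiplicity $\Theta(Z)$. Write $\beta(Z) = \max_{x \in Z \cap X_\sing} \strength(x)$.

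\textbf{Base case.} If there is no singular time in $(t_0,t)$, then each component $Z$ of $X_{[t_0,t]}$ is a single geodesic segment. Hence $Z_t$ is a single point, $\spread(Z) = 0$, and the bound holds trivially. The same holds whenever $Z \cap X_\sing = \varnothing$. In that case $Z$ is one segment, because $Z$ is connected and contains no singular points, so $Z_t$ is a singleton and the spread is $0$.

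\textbf{Inductive step.} Let $\tau \in (t_0,t)$ be the last singular time before $t$. Pick $t' \in (\tau,t)$ with $[t',t] \cap \tilde V = \varnothing$ and $t'' \in (t_0,\tau)$ with $[t'',\tau] \cap \tilde V = \{\tau\}$.

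Components of $X_{[t_0,t]}$ correspond to components of $X_{[t_0,\tau]}$, because no singular points occur in $(\tau,t]$. Their spreads agree, since spread is invariant (the preceding proposition applied on the slab). By goodness, $X_\tau$ contains exactly one singular point $x_\sing$.

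\emph{Components missing $x_\sing$.} If $Z$ does not contain $x_\sing$, then $Z$ is a component of $X_{[t_0,t'']}$ extended by regular segments. The induction hypothesis at $t''$ gives the bound directly; note that $\beta$ and $\Theta$ are unchanged.

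\emph{The component containing $x_\sing$.} Let $z_1,\dots,z_m$ be the components of $Z \cap X_{[t_0,\tau)}$, which are components of $X_{[t_0,t'']}$ continued up to $\tau$.

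If $m = 1$, Proposition \ref{merge-spread-estimate} gives $\spread(Z) = \spread(z_1)$. Since $\Theta(z_1) = \Theta(Z)$ by the $k$-cover balancing, and $\beta(z_1) \le \beta(Z)$, induction finishes this case.

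If $m \ge 2$, then each $\Theta(z_i) \le \Theta(Z) - 1 =: \ell - 1$, since each $z_i$ has multiplicity at least $1$ and the multiplicities sum to $\ell$. Induction and monotonicity of $F$ give $\spread(z_i) \le F(\ell-1)\beta$, where $\beta = \beta(Z) \ge \beta(z_i)$ and also $\beta \ge \strength(x_\sing)$. Plugging into Proposition \ref{merge-spread-estimate}:
\[\spread(Z)^2 \le m F(\ell-1)^2\beta^2 + 2\sum_{i<i'}\Theta(z_i)\Theta(z_{i'})\,4F(\ell-1)^2\beta^2 + \ell^2\beta^2.\]
Using $m \le \ell$ and $2\sum_{i<i'}\Theta(z_i)\Theta(z_{i'}) \le \ell^2$, this is at most
\[\beta^2\bigl(\ell^2 + (\ell + 4\ell^2)F(\ell-1)^2\bigr) \le \beta^2\bigl(\ell^2 + \ell(1+4\ell^3)F(\ell-1)^2\bigr) = F(\ell)^2\beta^2,\]
where the last inequality holds because $4\ell^2 \le 4\ell^4$. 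The recursion defining $F$ is evidently generous enough to absorb these constants, so this step is routine.

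\textbf{Main obstacle.} The real care lies in the bookkeeping of the induction. Proposition \ref{merge-spread-estimate} is stated at the singular time $t$ itself. Applying it requires identifying the spreads of the $z_i$ (taken at times $t'' \in \tilde U$) and of $Z$ (taken at $t' \in \tilde U$) with the invariant spreads. It also requires checking that $\Theta(z_i)$ is well defined and sums to $\Theta(Z)$, which follows from the $k$-cover multiplicity balance at $x_\sing$. Finally, it requires confirming that the max-strength over $z_i$ is dominated by that over $Z$, which holds because $z_i \subset Z$. The monotonicity of $F$ is immediate from the recursion, since $F(\ell) \ge \ell \ge F(\ell-1)$ can itself be proved by induction.
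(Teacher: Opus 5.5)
Your proof is correct in substance and uses the same mechanism as the paper's. You apply Proposition~\ref{merge-spread-estimate} at a merging singular point, bound each incoming piece by $F(\ell-1)\beta$ using the induction hypothesis and monotonicity of $F$, and then absorb constants. The difference is in the bookkeeping. The paper inducts on $\Theta(Z)$ and jumps straight to the time $t_{\merge}$ at which $Z$ becomes connected, so it applies the merging estimate only once. You instead induct on the number of singular times in $(t_0,t)$ and pass through every singular point, treating non-merging ones ($m=1$) and components missing $x_{\sing}$ separately. Your version is longer, but it makes explicit why spread, multiplicity and maximal strength are unchanged across such points, which the paper leaves implicit. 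Your algebra also keeps the weights $\Theta(z_i)$ and uses $2\sum_{i<i'}\Theta(z_i)\Theta(z_{i'})\le \ell^2$ instead of the cruder $\Theta(z_i)\le \ell$, so it even yields the sharper factor $\ell+4\ell^2$ in place of $\ell(1+4\ell^3)$.

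There is one false step: your justification of monotonicity via $F(\ell)\ge \ell\ge F(\ell-1)$. The second inequality fails already at $\ell=4$, since $F(2)=2$ and $F(3)=\sqrt{9+3\cdot 109\cdot 4}=\sqrt{1317}>4$; in fact $F$ grows far faster than $\ell$. Monotonicity is nonetheless true and follows directly from the recursion: for $\ell\ge 2$,
\[F(\ell)^2 \ge \ell(1+4\ell^3)\,F(\ell-1)^2 \ge F(\ell-1)^2,\]
and $F\ge 0$. You need monotonicity to pass from $F(\Theta(z_i))$ to $F(\ell-1)$ in the case $m\ge 2$, so replace your argument with this one. With that fix your proof is complete.
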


\begin{proof}
    Let $\beta = \max_{x \in Z \cap X_\sing} \strength(x)$. We induct on the total multiplicity $\Theta(Z)$. If $\Theta(Z) = 1$, then $Z$ is a single geodesic segment, so $\spread(Z) = 0$. Assume that $\ell = \Theta(Z) \geq 2$ and that $Z_{t_0}$ is disconnected; otherwise $\spread(Z) = 0$ again. Let $t_\merge$ be the supremum of times $t'$ such that $Z_{[t_0,t']}$ is disconnected. Then $t_0 < t_\merge < t$ and $t_\merge \in \tilde V$.
    Let $z_1,\ldots,z_m$ be the connected components of $Z_{[t_0,t_\merge)}$, so $m \geq 2$. Let $x_\sing$ be the unique singular point in $X_{t_\merge}$. Then $\Theta(z_i) < \ell$ for all $i$, so they satisfy the inductive hypothesis when restricted to the interval $[t_0,t_{\mathrm{new}}]$, where $t_{\mathrm{new}} < t_\merge$ and $[t_{\mathrm{new}},t_\merge] \cap \tilde V = \{t_\merge\}.$ Applying Proposition \ref{merge-spread-estimate} we have 
    \[\spread(Z)^2 \leq \left( \sum_{i=1}^m \spread(z_i)^2 \right) + 2\left( \sum_{1 \leq i < i' \leq m} \Theta(z_i)\Theta(z_{i'})(\spread(z_i)+\spread(z_{i'}))^2 \right) + \bigg(\Theta(Z)^2 \strength(x_\sing)^2 \bigg).\]
    Simplify by bounding $\Theta(z_i) \leq \ell$, and $\strength(x_\sing) \leq \beta$:
    \[\spread(Z)^2 \leq \left( \sum_{i=1}^m \spread(z_i)^2 \right) + 2\ell^2\left( \sum_{1 \leq i < i' \leq m} (\spread(z_i)+\spread(z_{i'}))^2 \right) + \ell^2 \beta^2.\]
    By the inequality $(a+b)^2 \leq 2a^2 + 2b^2$ for $a,b \geq 0$,
    \[ \sum_{1 \leq i < i' \leq m} (\spread(z_i)+\spread(z_{i'}))^2 \leq 2\sum_{1 \leq i < i' \leq m} \left(\spread(z_i)^2+\spread(z_{i'})^2\right) \leq \sum_{i,i'} \left(\spread(z_i)^2+\spread(z_{i'})^2\right)\]
    \[= 2m\sum_{i=1}^m \spread(z_i)^2.\]
    Therefore we can further bound
    \[\spread(Z)^2 \leq \ell^2\beta^2 + (1 + 4m\ell^2) \sum_{i=1}^m \spread(z_i)^2.\]
    Since $m \leq \ell$ and $\spread(z_i) \leq F(\ell-1) \beta$ for all $i$, we obtain 
    \[\spread(Z)^2 \leq \ell^2\beta^2 + \ell(1 + 4\ell^3) F(\ell-1)^2\beta^2, \]
    so
     \[\spread(Z) \leq \sqrt{\ell^2 + \ell(1 + 4\ell^3) F(\ell-1)^2} \beta. \]
     Thus we can take $F(\ell) = \sqrt{\ell^2 + \ell(1+4\ell^3 )F(\ell-1)^2},$
     which is what we assumed.
\end{proof}

Putting the bound together with Lemma \ref{spread-formula}, we obtain the following result. This shows us that in order for $X$ to merge completely after some $t_0 \in \tilde U$, there must be a strong collision in the time $X$ merges back together from $t_0$.

\begin{proposition} \label{find-strong-singular-point}
    Assume that $k \geq 2$ and $\delta \leq 1$. Suppose that $X$ merges completely after $t_0$. Let $t_* \in [t_0, \infty)$ be the infimum of $t \geq t_0$ such that $X_{[t_0,t]}$ is connected. Then
    \[\max_{x \in X_\sing \cap X_{[t_0,t_*]}} \strength(x) \geq \frac{\sqrt{\frac{k}{2}}}{F(k)} \delta,\]
    where we take the maximum to be zero if $X_\sing \cap X_{[t_0,t_*]} = \varnothing.$
\end{proposition}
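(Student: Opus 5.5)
The plan is to combine the upper bound from Proposition \ref{merge-spread-induction} with the exact value of the full spread from Lemma \ref{spread-formula}. Since $X$ merges completely after $t_0$ and $X_\sing$ is finite, the infimum $t_*$ is attained, and $t_* \in \tilde V$: it is the time of the last merge, or $t_* = t_0$ if $X_{t_0}$ is already connected. In the degenerate case $X_{t_0}$ connected we have $\delta = 0$, so the bound is trivial. Assume then $t_* > t_0$.

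First I would choose $t_1 \in \tilde U$ with $t_1 > t_*$ and $(t_*, t_1] \cap \tilde V = \varnothing$. Then $X_{[t_0,t_1]}$ is connected, because it is $X_{[t_0,t_*]}$ with segments attached. Its singular points are exactly $X_\sing \cap X_{[t_0,t_*]}$. Set $Z = X_{[t_0,t_1]}$. Then $\Theta(Z) = k$ and $Z_{t_1} = X_{t_1}$.

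Next, compute the spread of $Z$ directly. By definition and Lemma \ref{spread-formula} applied at the regular time $t_1$,
\[\spread(Z)^2 = \frac12\sum_{x,y\in X_{t_1}}\Theta(x)\Theta(y)|s_x-s_y|^2 = k\delta^2 - \frac{\delta^4}{4}.\]
Since $\delta \le 1$ and $k \ge 2$, we have $\frac{\delta^4}{4} \le \frac{\delta^2}{4} \le \frac{k}{2}\delta^2$. Hence $\spread(Z)^2 \ge \frac{k}{2}\delta^2$, that is, $\spread(Z) \ge \sqrt{k/2}\,\delta$.

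Now apply Proposition \ref{merge-spread-induction} to $Z$ at time $t_1 \in \tilde U$. This gives
\[\spread(Z) \le F(k)\max_{x\in Z\cap X_\sing}\strength(x) = F(k)\max_{x\in X_\sing\cap X_{[t_0,t_*]}}\strength(x).\]
Note $F(k) > 0$ for $k \ge 2$, since $F(2) \ge 2$. Dividing then gives the claim.

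The main point to check is the identification of the singular set of $Z$ with $X_\sing \cap X_{[t_0,t_*]}$, together with the attainment of $t_*$. Both follow from the finiteness of $\tilde V$, since connectivity of $X_{[t_0,t]}$ changes only at singular times. Everything else is a direct combination of the two earlier results.
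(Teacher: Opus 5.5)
Your proposal is correct and matches the paper's proof. Both evaluate the spread of the fully merged slab as $\sqrt{k\delta^2-\delta^4/4}$ via Lemma~\ref{spread-formula}, bound it above by $F(k)\max\strength$ via Proposition~\ref{merge-spread-induction}, and finish using $\delta\le 1$, $k\ge 2$, $F(k)>0$.

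Your one refinement is harmless and slightly more careful. You pass to a regular time $t_1>t_*$ with $(t_*,t_1]\cap\tilde V=\varnothing$, so Proposition~\ref{merge-spread-induction}, which is stated for $t\in\tilde U$, applies literally; the paper applies it directly to $X_{[t_0,t_*]}$ at the singular time $t_*$.
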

\begin{proof}
    By Lemma \ref{spread-formula}, we have
    \[\spread(X_{[t_0,t_*]}) = \sqrt{k\delta^2 - \frac{\delta^4}{4}}.\]
    But by Proposition \ref{merge-spread-induction}, 
    \[\spread(X_{[t_0,t_*]}) \leq F(k) \max_{x \in X_\sing \cap X_{[t_0,t_*]}} \strength(x).\]
    Since $k \geq 2$, $F(k) > 0$, so
    \[ \max_{x \in X_\sing \cap X_{[t_0,t_*]}} \strength(x) \geq \frac{\sqrt{k\delta^2 - \frac{\delta^4}{4}}}{F(k)}.\]
    Since $\delta \leq 1$, we obtain the desired result.
\end{proof}

\section{The Global Argument}
\label{sec:globalargument}

In this section, we give a proof of Theorem \ref{maintechnicaltheorem}. First, we will give a proof sketch. 

\begin{proof}[Proof sketch] The proof is by induction on $k \geq 1$. If $k=1$ or $\delta = 0$, or $X_\sing = \varnothing$, the proof is trivial, so we assume $k \geq 2$ and $\delta > 0$, and $X_\sing \neq \varnothing$. Thus $V \neq \varnothing$. By Proposition \ref{potential-jump-final}, if we ignore third-order terms in $\delta$, the collision potential $\Phi$ is nonincreasing, and decreases by an amount linear in the strength $\beta$ when there is a singular with strength large compared to  $\delta^2$ ($\beta$ being linear in $\delta$ works). By Lemma \ref{potential-global-bound}, the collision potential is bounded itself linearly in $\delta$. Lastly, Proposition \ref{find-strong-singular-point} gives a method of finding ``strong'' singular points whose strength is at least linear in $\delta$, as long as $X$ merges completely after some given start time $t_0$. 

If we could ignore the contributions from weaker singular points to $\Phi$ that may be the wrong sign, combining the global bound on $\Phi$ with the decrease in $\Phi$ from each strong singular point, which are both linear in $\delta$, we have a bound on the number of strong singular points. The problem is that there may be many more weaker singular points and ghost crossings, which contribute the wrong sign to changes in $\Phi$. However, we can get around this by using the induction hypothesis on disconnected time-slabs of $X$ to show that the number of weak singular points and ghost-crossings does not exceed a constant times the number of strong singular points. Thus, we obtain a bound on the number of strong singular points, which therefore bounds the total number of singular points.

\end{proof}

\begin{proof}
    The proof is by induction on $k \geq 1$. We are given an ISGN $X \subset \mathbb{R}^n$ which is a $k$-cover, $\theta$-flat, and has $|X_\sing| < \infty$. 
    
    If $k=1$, $X$ is an affine line, so $|X_\sing| = 0$, so we may take $\theta(n,1)$ arbitrarily in $(0,\frac{\pi}{2})$, and $C(n,1) = 0$.
    
    Assume $k \geq 2$ and the theorem holds for all $\ell < k$. Our aim is to select $\theta = \theta(n,k)$ and $C = C(n,k)$ such that the desired conclusion holds. If $\delta = 0$, then $X$ is a union of $k$ parallel lines, counted with multiplicity, so $|X_\sing| = 0$. Thus we may assume that $\delta > 0$. If $X_\sing = \varnothing$, there is nothing to prove, so assume that $X_\sing \neq \varnothing$. Thus $V \neq \varnothing$.
    
    We may assume that $\theta \leq \frac{\theta(n,k-1)}{3},$ and will always take $C(n,\ell)$ nondecreasing in $\ell$. By induction it follows that 
    $\theta \leq \frac{1}{3}\min_{1 \leq \ell < k}\theta(n,\ell).$

    Decreasing $\theta$ if necessary, assume that $\theta < \frac{\pi}{4}.$ Fix some $\gamma \in (0,\frac{1}{4})$. By Proposition \ref{rotateX}, there exists a rigid motion $\phi \in SE(n)$ such that $\phi(X)$ is an ISGN in $\mathbb{R}^n$ which is a $k$-cover, $3\theta$-flat, $\gamma$-centered, and good. Rename $\phi(X)$ to be $X$.

    Decreasing $\theta$ if necessary such that $\theta \leq \frac{1}{12\sqrt{k}}$, by Lemma \ref{central-tilt-equivalence}, we have $\delta \leq \frac{1}{2}$.

    By Lemma \ref{potential-global-bound}, we have $|\Phi(t)| \leq 2nk^2\delta$ for all $t \in U$. Since $\Phi$ is constant outside a sufficiently large compact set, define $\Phi(\infty) = \lim_{t \to +\infty} \Phi(t)$ and $\Phi(-\infty) = \lim_{t \to -\infty} \Phi(t)$. Then \[\Phi(-\infty) - \Phi(\infty) \leq |\Phi(-\infty)| + |\Phi(\infty)| = 4nk^2\delta.\]

    For some $N \geq 0$ define a finite sequence 
    \[a_0 < b_0 \leq c_0 < a_1 < b_1 \leq c_1 < \cdots < c_{N-1} < a_N,\]
    where $a_i \in U$ and $b_i,c_i \in \tilde V$, as follows.
    Let $a_0 < \min V$, so $X_{(-\infty,a_0]}$ has no singular points nor ghost-crossings. Suppose that $a_0 < b_0 \leq c_0 < \cdots < c_{i-1} < a_i$ have been defined. If $X$ splits after $a_i$, then let $N = i$ and terminate. Else, $X$ merges completely after $a_i$. Let $c_i \geq a_i$ be the infimum of $t \geq a_i$ such that $X_{[a_i,t]}$ is connected. Then $c_i < \infty$ because $X$ merges completely after $a_i$, and $c_i > a_i$ because otherwise $\delta = 0$, contrary to our assumption. Further, $c_i \in \tilde V$. By Proposition \ref{find-strong-singular-point}, we have 
    \[\max_{x \in X_{\sing} \cap X_{[a_i,c_i]}} \strength(x) \geq \frac{\sqrt{\frac{k}{2}}}{F(k)} \delta.\]
    Let $x_i \in X_{\sing} \cap X_{[a_i,c_i]}$ achieve the maximum, and let $b_i = \pi(x_i)$. Then $b_i \in (a_i,c_i]$. Choose $a_{i+1} > c_i$ such that $(c_i,a_{i+1}] \subset U$, and recurse.
    This process must terminate, as $\tilde V$ is finite.

    By construction, for each $0 \leq i < N$, $X_{[a_i,c_i)}$ is not connected. Given a connected component $Z$ of $X_{[a_i,c_i)}$, we may extend $Z$ to an ISGN $Z'$ in $\mathbb{R}^n$ which is $3\theta$-flat and an $\ell$-cover, where $\ell = \Theta(Z) \leq k-1$, and which has finitely many singular points.
    Thus $Z'$ is $\theta(n,\ell)$-flat, an $\ell$-cover, and has $|Z_\sing| < \infty$, so we may apply the inductive hypothesis to conclude that $|Z_\sing| \leq |Z'_\sing| \leq C(n,\ell) \leq C(n,k-1).$ Since $X_{[a_i,c_i)}$ has at most $k$ connected components, we conclude that $X_{[a_i,c_i)}$ has at most $k C(n,k-1)$ singular points. There is one more singular point in $X_{c_i}$, so $X_{[a_i,c_i]}$ has at most $k C(n,k-1) + 1$ singular points.
    By the same argument, as $X_{[a_N,\infty)}$ is not connected, $X_{[a_N,\infty)}$ also has at most $k C(n,k-1)$ singular points.

    Thus, $X$ has at most $(N + 1) (k \, C(n,k-1) + 1)$ singular points. Since $X$ is good, by Lemma \ref{bound-ghost}, $X$ has at most 
    \[((N+ 1) (k \, C(n,k-1) + 1) + 1)(n-1){k \choose 2}\]
    ghost-crossings.
    Let
    \[A(n,k) = (k \, C(n,k-1) + 1)(n-1){k \choose 2} + k \, C(n,k-1) + 1,  \]
    so $X$ has at most $(N + 1) A(n,k) + (n-1){k \choose 2}$ ghost-crossings and singular points.

    Looking at the times $b_0 < \cdots < b_{N-1}$, $X$ has at least $N$ singular points which have strength at least $\beta := \frac{\sqrt{\frac{k}{2}}}{F(k)} \delta.$ 

    Since we are still free to choose $\theta$ and $\delta \leq 6\sqrt{k} \theta$ by Lemma \ref{central-tilt-equivalence}, make $\theta$ sufficiently small so that $\beta > \delta^2$ and $\sqrt{\frac{\beta^2-\delta^4}{n}}>8\delta^3$.
    
    By Proposition \ref{potential-jump-final}, we have an upper bound on the jump of $\Phi$ at every singular or ghost-crossing time in $V$. Putting the above two bounds together, we obtain that

    \[\Phi(\infty) - \Phi(-\infty) = \sum_{t \in V} \Delta(t) \leq 
    \left((N + 1) A(n,k) + (n-1){k \choose 2}\right) \, 8nk^2\delta^3 - N\sqrt{\frac{\beta^2-\delta^4}{n}}. \]

    But from above, we know that $\Phi(-\infty) - \Phi(\infty) \leq 4nk^2\delta$. Putting things together, it follows that

    \[-4nk^2\delta \leq \Phi(\infty) - \Phi(-\infty) \leq 
    \left((N + 1) A(n,k) + (n-1){k \choose 2}\right) \, 8nk^2\delta^3 - N\sqrt{\frac{\beta^2-\delta^4}{n}}, \]
    and reversing signs, we get that
    \[ N\sqrt{\frac{\beta^2-\delta^4}{n}} - \left((N + 1) A(n,k) + (n-1){k \choose 2}\right) \, 8nk^2\delta^3 \leq 4nk^2\delta.\]
    Dividing through by $\delta$ and rearranging, we have 
\[ N\left[\sqrt{\frac{\frac{k}{2F(k)^2}-\delta^2}{n}} -  A(n,k) \, 8nk^2\delta^2\right] - \left(A(n,k) + (n-1){k \choose 2}\right) \, 8nk^2\delta^2 \leq 4nk^2.\]

We claim now that \[N \leq 16\sqrt{2}(nk)^{3/2} F(k).\] Indeed, if $N = 0$ there is nothing to prove, so assume $N \geq 1$. We are still free to choose $\theta$, and $\delta \leq 6\sqrt{k} \theta$ by Lemma \ref{central-tilt-equivalence}, so take $\theta$ small enough so that in each of the three subtractions above with terms involving a $\delta^2$, at most one half of the term is lost (this can be done purely depending on $n$ and $k$, since $N \geq 1$). With $\theta$ chosen this way, it follows that 
    \[ \frac{N}{2} \sqrt{\frac{\frac{k}{4F(k)^2}}{2n}} \leq  4nk^2,\]
    so
    \[N \leq 8nk^2 \sqrt{\frac{8nF(k)^2}{k}} = 16\sqrt{2}(nk)^{3/2} F(k) \]
    as desired.
    Thus, $X$ has at most
    \[(N+1) (k \, C(n,k-1) + 1) \leq \left[ 16\sqrt{2}(nk)^{3/2} F(k) + 1\right] (k \, C(n,k-1) + 1) =: C(n,k) \]
    singular points.
\end{proof}

    \section{Proof of Main Theorem}
    \label{sec:proofmaintheorem}

    We elect to prove the following theorem, which implies Theorem \ref{maintheoremfinal}.

    \begin{theorem}
        For any $n \geq 2$ and $k \geq 1$, there exists a $C = C(n,k)$ with the following property.
        Let $(X_i)$ be a sequence of integral stationary geodesic networks in $\mathbb{R}^n$ which converge weakly (as Radon measures) to a multiplicity $k$ line $\ell$. Then for any finite length, finite radius tube $T$ centered around $\ell$ and constants $\theta,r > 0$, there exists an integer $N$ such that for all $i \geq N$, $X_i \cap T$ is, when considered with multiplicity, a $k$-fold branched cover of $\ell \cap T$ along the orthogonal projection map $\mathbb{R}^n \to \ell$, whose angular deviation from $\ell$ is controlled by $\theta$, whose distance to $\ell$ is controlled by $r$. Finally, the number of singular points of $X_i \cap T$ is bounded above by $C(n,k)$. 
    \end{theorem}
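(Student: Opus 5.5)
We will combine the Structure Theorem (Theorem \ref{structure-theorem}) with the main technical theorem (Theorem \ref{maintechnicaltheorem}), using the extension trick from the remark following Theorem \ref{maintechnicaltheorem}. Take $C(n,k)$ to be the constant of Theorem \ref{maintechnicaltheorem}. After a rigid motion we may assume $\ell$ is the $E_1$-axis and $T = T_{I,R}$ for a bounded open interval $I$. Given $\theta, r > 0$, we shrink $\theta$ so that $\theta \leq \theta(n,k)/2$ and $\theta < \frac{\pi}{4}$, and we shrink $r$ so that $r < R$. We then apply Theorem \ref{structure-theorem} on a slightly larger interval $I' \supset\supset I$ (and then restrict to $I$). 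This gives $N$ such that for all $i \geq N$ the four properties hold on $T_{I',R}$:
\begin{enumerate}
\item containment in $T_{I',r}$;
\item angle at most $\theta$ with $E_1$;
\item left and right multiplicities agree at each singular point;
\item each slice $X_i \cap T_{t,R}$ has at most $k$ points and total density exactly $k$.
\end{enumerate}
Together these give the branched cover description with controlled angle and distance, which is all of the statement except the singular point count.

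For the count, fix $i \geq N$ and let $Y = X_i \cap \overline{T_{I,R}}$. Because every tangent is within angle $\theta < \pi/2$ of $E_1$, each slice is finite, and by (4) it has at most $k$ points. Hence $Y \cap T_{\partial I, R}$ consists of at most $k$ points on each end. We may shift the endpoints of $I$ slightly within $I'$ so that these slices contain no singular points. At each such boundary point exactly one geodesic segment passes through, so the multiplicities are well defined.

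We extend each boundary segment to a ray leaving the tube, continuing in its own direction. Call the result $X'$. It is a geodesic network with integer multiplicities, and it is stationary at the old singular points. New singular points arise where rays intersect. Two rays in $\mathbb{R}^n$ meet at most once, so there are finitely many crossings. At each crossing the balancing condition holds automatically, since each ray passes straight through; the crossing point becomes a vertex of degree four with opposite directions paired.

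We then check that $X'$ satisfies the hypotheses of Theorem \ref{maintechnicaltheorem}:
\begin{enumerate}
\item Flatness: every edge of $X'$ is parallel to an edge of $Y$, hence is $\theta$-flat and so $\theta(n,k)$-flat.
\item $k$-cover: at old singular points this is property (3). At crossings, the multiplicity entering from the left equals the multiplicity leaving on the right, because rays pass through. Hence the slice density $\sum \Theta$ is locally constant in $t$, and it equals $k$ inside $I$.
\item Slice cardinality: $|X'_t| \leq k$ follows because each point carries density at least $1$ (regular points have integer multiplicity $\geq 1$, and singular points have density at least $1$ by the equal left/right multiplicity condition).
\item Finiteness: $X'_\sing$ is finite, because $X_i$ is locally finite in the compact set $\overline{T_{I,R}}$ and there are only finitely many ray crossings.
\end{enumerate}
Theorem \ref{maintechnicaltheorem} then gives $|X'_\sing| \leq C(n,k)$. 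Since $X_i \cap T \subset X'$ and singular points of $X_i$ in $T$ remain singular in $X'$, the count $|(X_i)_\sing \cap T| \leq C(n,k)$ follows. Finally, Theorem \ref{maintheoremfinal} follows by covering the compact set $K$ by such a tube $T$ and invoking property (1) to see that $X_i \cap K \subset T$ for large $i$.

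The main obstacle is the bookkeeping at the boundary slices and at new crossings of the extension. Specifically, one must ensure:
\begin{itemize}
\item the boundary slices can be chosen regular, so that the extension is well defined;
\item ray crossings do not violate the left/right multiplicity condition;
\item a crossing of rays coinciding with an existing vertex on the tube boundary is avoided.
\end{itemize}
I would handle these by choosing the endpoints of $I$ generically within $I'$; only finitely many times are bad, since the slices with singular points are finite. A second subtlety is that a ray could overlap another ray collinearly. In that case the two merge into a single edge whose multiplicity is the sum, which preserves all the conditions.
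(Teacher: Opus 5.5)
Your proposal is correct and follows the paper's route. You rotate $\ell$ to the $E_1$-axis and apply Theorem~\ref{structure-theorem} with $\theta$ shrunk below $\theta(n,k)$ to get the branched-cover, angle and distance control; you then extend the tube piece of $X_i$ linearly to infinity to get a $k$-cover, $\theta(n,k)$-flat ISGN with finitely many singular points and apply Theorem~\ref{maintechnicaltheorem}. Your extra bookkeeping (enlarging to $I'$, choosing regular boundary slices, checking that ray crossings are balanced and respect the left/right multiplicity condition) fills in details the paper handles in a single line.
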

    \begin{proof}
        By a rigid motion, we may assume that $\ell$ is the $E_1$-axis. Let $T \subset T_{I,R}$ for $I$ an open interval of finite length and $R > 0$.

        Consider $\theta(n,k)$ as in Theorem \ref{maintechnicaltheorem} and let $\theta' = \min(\theta,\theta(n,k))$, $r' = \min(r,\frac{R}{2}).$
        Apply Theorem \ref{structure-theorem} with $I$, $0 < r' < R$ and $\theta'$ to obtain $N$. Reading off the results from the structure theorem, we have $X_i \cap T_{I,R} \subset T_{I,r'}$, so the distance of $X_i \cap T_{I,R}$ from the $E_1$-axis is bounded by $r$, and the angular deviation of $X_i \cap T_{I,R}$ from the $E_1$-axis is bounded by $\theta$. The last condition (4) tells us that $X_i \cap T_{I,R}$ is a branched $k$-cover over the $E_1$-axis. That implies all of those conditions also hold for $X_i \cap T$, as desired.

        Take $X_i \cap T_{I,R}$ and extend it to an integral stationary geodesic network $X'$ in $\mathbb{R}^n$ by extending the geodesics linearly to infinity. This introduces at most $2{k \choose 2}$ more singular points. From the structure theorem, $X'$ is a $k$-cover, $\theta(n,k)$-flat, and has $|X'_\sing| < \infty$. By Theorem \ref{maintechnicaltheorem}, $|X'_\sing| \leq C(n,k)$. Since $X_i \cap T$ is contained in $X'$, it has at most $C(n,k)$ singular points.
    \end{proof}

\bibliography{references}
\bibliographystyle{amsalpha}

\end{document}